\documentclass[11pt,leqno]{article}
\usepackage{times}
\usepackage{tgtermes}
\usepackage[T1]{fontenc}
\usepackage[utf8]{inputenc}
\usepackage[english]{babel}
\usepackage{fancyhdr}
\usepackage{setspace}
\usepackage{titlesec}
\usepackage{cite}
\titleformat{\section}
{\normalfont\fontsize{16}{20}\bfseries}{\thesection}{1em}{}
\titleformat{\subsection}
{\normalfont\fontsize{16}{20}\bfseries}{\thesubsection}{1em}{}
\titlespacing*{\section}
{0pt}{2.ex plus 1ex minus .2ex}{.0ex}
\titlespacing*{\subsection}
{0pt}{0.ex}{.0ex}
\usepackage{cancel}
\newcommand{\ud}{\underline}
\newcommand{\R}{\mathbb R} 
\newcommand{\dis}{\displaystyle}
\usepackage{amssymb}
\usepackage{amsfonts}
\usepackage{amsmath}
\numberwithin{equation}{section}
\usepackage{amsthm}
\usepackage{mathtools}
\usepackage{multirow}
\usepackage{cancel}
\usepackage[
left=25mm,
right=25mm,
top=25mm,
bottom=25mm,
]{geometry}

\usepackage{float}
\numberwithin{figure}{section} 
\numberwithin{table}{section} 

\newtheorem{theo}{Theorem}[section]
\newtheorem{prop}{Proposition}[section]

\newtheorem{rem}{Remark}[section]

\usepackage{stmaryrd}

\usepackage{xcolor}
\usepackage{tikz}
\usepackage{indentfirst} 

\usepackage{hyperref}
\usepackage{xcolor} % For colors
\usepackage{tcolorbox}
\usepackage{caption}
\usepackage{float}
\usepackage{booktabs}
\usepackage{authblk}

\title{On some 1D nonlocal models with coefficients changing sign}
\author{Maha DAOUD}
\affil{ Département Génie Mathématique et Modélisation, INSA Toulouse, 31400 Toulouse, France\\\vspace*{.15cm} mahaadaoud@gmail.com}

\begin{document}

	\maketitle
\begin{abstract}
In this work,	we study one-dimensional nonlocal elliptic transmission problems with piecewise constant coefficients that may change sign across an interface. In the local setting, we recall the T-coercive structure of the problem and characterize the critical contrast case. In the nonlocal setting, we focus on a simplified configuration in which the cross-interaction coefficient vanishes. Under this assumption, we prove a weak T-coercivity result for the global fractional problem and introduce a reconstructed formulation based on an explicit interface lifting. Then, we consider a simplified finite element discretization of the reconstructed model and prove its convergence toward the classical local transmission problem as the fractional parameter $s\to 1^-$ and the mesh size $h\to 0^+$. Numerical simulations in 1D illustrate the stability and consistency of the method, and a preliminary two-dimensional extension is presented as an exploratory perspective.
	
		\smallskip
	\noindent \textbf{Keywords:} Nonlocal transmission problem, Sign-changing coefficients, T-coercivity, Fractional Laplacian, Interface reconstruction, Finite element method, Local limit, Asymptotic convergence.
	
	\smallskip
	\noindent\textbf{2020 Mathematics Subject Classification:} 35R11, 65N30, 65N12, 35B40.
\end{abstract}
\section{Introduction}

Nonlocal diffusion models have attracted considerable attention in
	recent years, both because of their mathematical richness and because of their
	ability to describe long-range interactions and anomalous diffusion. They arise
	in a wide range of applications and have given rise to a vast literature, from
	the functional framework of fractional Sobolev spaces
	\cite{DiNezzaPalatucciValdinoci,Leoni,BourgainBrezisMironescu,MazyaShaposhnikova}
	to well-posedness and regularity issues for fractional problems
	\cite{ServadeiValdinoci,RosOtonSerra,DaouLaamBaal2024,BucVal2016,BorLiNo},
	as well as to the numerical approximation of fractional models
	\cite{AcostaBor,AcostaBersetcheBorthagaray,BonitoLeiPasciak,BonBorNochAl,DeliaDuGlusaGunzburgerTianZhou,BicHer}.
	We also mention the recent work \cite{BorCia3} on coupled local and nonlocal
	diffusion models.%, which is closely related to the present perspective.

\smallbreak 

One of the motivations of the present work comes from interface problems arising
in electromagnetics, and more specifically from transmission problems between a
standard dielectric material and a metamaterial. In such situations, the
effective coefficients may change sign, reflecting for instance negative
permittivity or permeability in part of the domain. In the local setting, these
sign changes may lead to major analytical and numerical difficulties: depending
on the contrast and on the geometry of the interface, the associated scalar problem may fail to be coercive,  and may exhibit singularities. For further details, see
\cite{BonCheCia,BonCheCla,BonCheCiaMaxwell,BonCheCiaMaxwell2D} and their bibliographies.
Indeed, the T-coercivity approach has proved to be a powerful tool
to recover a well-posedness theory away from critical contrasts
(see \cite{BonCheCia}).

Motivated by the nonlocal reformulation proposed in \cite{BorCia1}, we consider here a
one-dimensional nonlocal elliptic model involving a piecewise constant
interaction coefficient, allowed to change sign according to the
location of the interacting points.  To the best of our knowledge, a general
well-posedness theory for such nonlocal models is still unavailable.

\smallbreak
\noindent Our purpose is twofold. First, we clarify the structure of the
local transmission problem, which serves as a reference model.
Second, we analyze a simplified nonlocal formulation and derive from it a new
reconstructed model that is better suited both for the analysis and for the
numerical approximation.

\smallbreak \noindent
More precisely, for the local  problem, we establish Hadamard
well-posedness away from a critical contrast ratio. In the critical case, we
identify a nontrivial kernel generated by a function $\varphi$. This local
analysis motivates the form of
the lifting used in the nonlocal setting.

\smallbreak 
For the nonlocal model, we focus on a simplified configuration in which the
cross-interaction coefficient is removed. Under this
assumption, we prove that the associated global bilinear form is weakly
T-coercive, which yields well-posedness in the Fredholm sense. Then, we introduce a reconstructed decoupled
representation of the solution of the form
$
	u^s = u_0^s + u^s(b)\varphi^s,
$
where
$
u_0^s \in \widetilde H^s(I_1)\oplus \widetilde H^s(I_2)$
\text{and} $
\varphi^s\in \widetilde H^s(I)
$
is a lifting function satisfying suitable boundary and normalization conditions.
Once the normalization $\varphi^s(b)=1$ is imposed, the interface coefficient
$u^s(b)$ is uniquely determined.

\smallbreak 
Furthermore, this decomposition provides the basis for an efficient numerical strategy.
We consider finite element discretizations of the old model, the reconstructed
model, and a simplified reconstructed model.  We then study the convergence of
the latter toward the classical local solution as $s\to1^-$. The resulting
formulation has a natural block structure: the subdomain contributions can be
treated independently, while the global interaction is recovered through a small
number of interface unknowns. The one-dimensional numerical experiments
illustrate the stability of the method, its consistency with the local limit,
and the interest of the reconstructed formulation in the presence of
sign-changing coefficients.

\smallbreak
Finally, motivated by the encouraging simulations reported in \cite{BorCia1}
and by the structure of the reconstructed discrete system, we include a
preliminary extension of the method to a simple two-dimensional setting. This
part is exploratory in nature and is intended to illustrate the potential of
the approach beyond the one-dimensional framework.

%\smallbreak 
%
%For clarity, we distinguish three levels in the analysis. First, we study the
%local transmission problem, which serves as a reference model and identifies the
%relevant interface structure. Second, we analyze a simplified global nonlocal
%formulation, obtained by suppressing the cross-interactions between the two
%subdomains, and establish a Fredholm well-posedness result for it. Third, we
%introduce a reconstructed nonlocal formulation based on the decomposition
%\eqref{decomp}
%which isolates the interface contribution through the scalar quantity $u^s(b)$.
%This reconstructed point of view is then used as the basis for the numerical
%approximation and for the convergence analysis toward the local solution as
%$s \to 1^-$.

\medskip
\noindent
\textbf{Outline of the paper.}
The remainder of the paper is organized as follows. In Section~\ref{Section2}, we introduce
the local and nonlocal models and establish the analytical framework. In
Section~\ref{Section3}, we present the finite element discretizations of the old, new, and
simplified new models. In Section~\ref{Section4}, we study the convergence of the simplified
nonlocal model toward the local one. In Section~\ref{Section5}, we report one-dimensional
numerical experiments. Finally, in Section~\ref{Section6}, we discuss a preliminary
two-dimensional extension of the method.

\section{From local to nonlocal models with sign-changing coefficients} \label{Section2}
In this section, we study three 1D problems with
piecewise constant coefficients that may change sign across an interface.
First, we recall the well-posedness theory for the corresponding local problem, which serves as a reference framework.
In this setting, the problem admits a natural interface decomposition and
the associated bilinear form is T-coercive under a sharp algebraic
condition on the coefficients.

Then, we address the nonlocal counterpart involving the fractional
Laplacian. In contrast with the local case, the presence of long-range
interactions introduces additional difficulties, in particular when
coefficients change sign. We begin by discussing an intuitive fractional formulation, and then restrict ourselves to a simplified case. In that setting, we prove weak T-coercivity for the corresponding bilinear form. Finally, we introduce a reconstructed interface formulation, which provides a convenient framework to relate the nonlocal model to the local theory as $s\to1^-$.
	\subsection{Local problem}
For a fixed $b$, let $I=(0,1)$ be decomposed into the subintervals $I_1=(0,b)$ and $I_2=(b,1)$. Given $f\in H^{-1}(I)$, let us consider the one-dimensional elliptic problem 
\begin{equation} \label{ClassicalProblem}
	\left\{   
	\begin{array}{rcl}
		-\mathrm{div}\bigl(\sigma(x)\nabla u\bigr) & = f & \text{in } I,\\
		u(0)=u(1) &= 0,
	\end{array}     
	\right.
\end{equation}
where $\sigma(x)=\sigma_1>0$ in $I_1$ and $\sigma(x)=\sigma_2<0$ in $I_2$. This local sign-changing transmission problem is classical in the framework of
T-coercivity; see, for instance, \cite{BonCheCia,BonCheCla,BonCheCiaMaxwell}.

The associated variational formulation reads
\begin{equation} \label{ClassicalVar}
	\left\{
	\begin{array}{lll}
		\text{find } \;u\in H^1_0(I)\; \text{ such that:}
		\\\dis
		a(u,v)=\int_I f\,v\,\mathrm{d}x,
		\qquad \forall v\in H^1_0(I),
	\end{array}
	\right.
\end{equation}
where
\[
a(u,v):=\int_I \sigma(x)\,u'(x)v'(x)\,\mathrm{d}x,
\]
and the corresponding operator $A\in\mathcal L(H^1_0(I),H^{-1}(I))$ is defined by
\[
\langle Au,v\rangle_{H^{-1},H^1_0} := a(u,v), \qquad \forall u,v\in H^1_0(I).
\]
Let us recall that $H^1_0(I)$ is continuously embedded into
$C^0(\overline I)$, so that pointwise evaluation at $x=b$ is well-defined.
As a consequence, any $v\in H^1_0(I)$ admits the continuous decomposition
\begin{equation}
	\label{decc}
	v = v_0 + v(b)\,\varphi,
\end{equation}
where $v_0\in H^1_0(I)$ with $v_0(b)=0$, and $\varphi\in H^1_0(I)$ is a
piecewise affine function such that $\varphi(b)=1$.
This choice uniquely determines $\varphi$, which is given by
\[
\varphi(x)=
\begin{cases}
	\dfrac{x}{b}, & x\in I_1,\\[4pt]
	\dfrac{1-x}{1-b}, & x\in I_2.
\end{cases}
\]
\begin{figure}[H]
	\centering
	\includegraphics[scale=0.225]{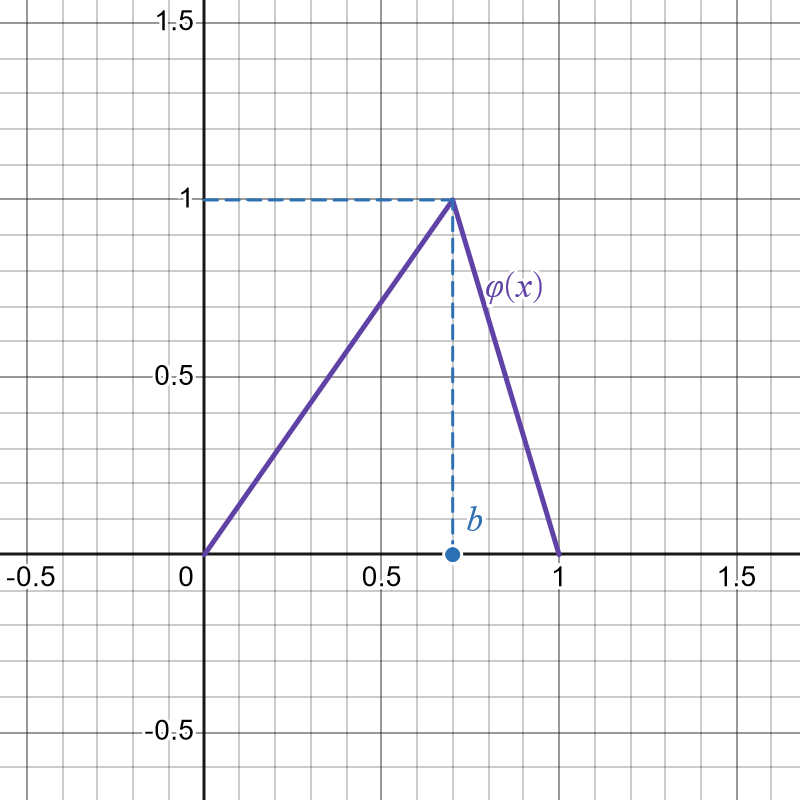}
	\caption{The function $\varphi$ for $b=0.7$}
	\label{fig:phi}
\end{figure}

The well-posedness of Problem~\eqref{ClassicalVar} in the Hadamard sense
is ensured by the T-coercivity of the bilinear form $a$.
More precisely, it suffices to find an isomorphism
$T \in \mathcal{L}(H^1_0(I))$  and $\alpha>0$ such that
\[
|a(v,Tv)| \ge \alpha \|v\|_{H^1_0(I)}^2 \qquad \forall v \in H^1_0(I).
\]

\begin{theo}[T-coercivity and characterization of the kernel]\label{thm:local-Tcoercive}
	Let $f\in H^{-1}(I)$.
	Then the following assertions hold:
	\begin{enumerate}
		\item[\em (i)] If
		$
		\dfrac{|\sigma_2|}{\sigma_1}\neq\dfrac{1-b}{b},
		$
		 the operator
	$
		A $
		is an isomorphism. In particular, Problem~\eqref{ClassicalVar} is well-posed in the
		Hadamard sense.
		\item[\em (ii)] In the critical case
	$
		\dfrac{|\sigma_2|}{\sigma_1}=\dfrac{1-b}{b},
		$
		the operator $A$ is not injective and
	$
		\ker(A)=\operatorname{span}\{\varphi\}.
	$
	\end{enumerate}
\end{theo}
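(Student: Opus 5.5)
The proof will use the interface decomposition \eqref{decc}. It splits $H^1_0(I)$ as $V_0\oplus \operatorname{span}\{\varphi\}$, where $V_0:=\{v\in H^1_0(I): v(b)=0\}\cong H^1_0(I_1)\oplus H^1_0(I_2)$. On $V_0$ the sign change of $\sigma$ is harmless, because the two subdomains are decoupled. The key observation is that $\varphi$ is $a$-orthogonal to $V_0$. Indeed, $\varphi'$ is constant on each $I_j$, so for $v_0\in V_0$
\[
a(v_0,\varphi)=\frac{\sigma_1}{b}\int_{I_1} v_0'\,\mathrm{d}x-\frac{\sigma_2}{1-b}\int_{I_2} v_0'\,\mathrm{d}x=0,
\]
since $v_0$ vanishes at $0$, $b$ and $1$. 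Moreover,
\[
a(\varphi,\varphi)=\frac{\sigma_1}{b}+\frac{\sigma_2}{1-b}
=\frac{\sigma_1}{b}-\frac{|\sigma_2|}{1-b},
\]
which vanishes exactly in the critical case $|\sigma_2|/\sigma_1=(1-b)/b$. So for $u=u_0+u(b)\varphi$ and $v=v_0+v(b)\varphi$ we get $a(u,v)=a(u_0,v_0)+a(\varphi,\varphi)\,u(b)v(b)$. This block-diagonal structure drives both assertions.

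For (i), I will define $T$ blockwise: $Tv:=v_0|_{I_1}-v_0|_{I_2}+\varepsilon\, v(b)\varphi$, where $\varepsilon:=\operatorname{sign}\,a(\varphi,\varphi)\neq 0$. Here $v_0|_{I_1}-v_0|_{I_2}$ denotes the function equal to $v_0$ on $I_1$ and $-v_0$ on $I_2$. It lies in $V_0$ because $v_0(b)=0$, so no continuity issue arises. Since $T\circ T=\mathrm{Id}$, $T$ is an isomorphism. Then
\[
a(v,Tv)=\sigma_1\|v_0'\|^2_{L^2(I_1)}+|\sigma_2|\|v_0'\|^2_{L^2(I_2)}+|a(\varphi,\varphi)|\,|v(b)|^2 .
\]
Two facts remain. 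First, $v\mapsto (v_0,v(b))$ is an isomorphism $H^1_0(I)\to V_0\times\mathbb R$, by continuity of point evaluation (the embedding $H^1_0\subset C^0$ recalled above) and boundedness of $\varphi$. Second, $\|v_0'\|_{L^2(I)}$ is a norm on $V_0$. Together these give $a(v,Tv)\ge \alpha\|v\|^2_{H^1_0}$ with $\alpha$ depending on $\sigma_1$, $|\sigma_2|$, $|a(\varphi,\varphi)|$ and $b$. Lax–Milgram applied to $(u,v)\mapsto a(u,Tv)$ then shows that $A$ is an isomorphism. Well-posedness follows, and the stability constant depends on the contrast gap.

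For (ii), in the critical case the splitting gives $a(\varphi,v)=a(\varphi,\varphi)v(b)+a(\varphi,v_0)=0$ for every $v$, so $\varphi\in\ker A$. Conversely, let $u=u_0+u(b)\varphi\in\ker A$. Testing with $v=v_0\in V_0$ gives $a(u_0,v_0)=0$ for all $v_0\in V_0$. Taking $v_0=Tu_0$, the sign-flipped function, yields $\sigma_1\|u_0'\|^2_{L^2(I_1)}+|\sigma_2|\|u_0'\|^2_{L^2(I_2)}=0$, hence $u_0=0$ and $u\in\operatorname{span}\{\varphi\}$.

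The only real obstacle is checking that the operator $T$ is well defined and bounded on $H^1_0(I)$, and that the coercivity constant is uniform. Here the decomposition \eqref{decc} does the work: flipping the sign of $v_0$ on $I_2$ preserves membership in $H^1_0(I)$ only because $v_0(b)=0$. The remaining estimate is just the equivalence of $\|v\|_{H^1_0}$ with $\|v_0'\|+|v(b)|$.
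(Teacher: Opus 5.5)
Your proof is correct. For (i) it follows the paper: the same splitting $v=v_0+v(b)\varphi$, the same sign-flip operator $T$, the same choice of $\varepsilon$ as the sign of $\sigma_1/b-|\sigma_2|/(1-b)=a(\varphi,\varphi)$, and the same identity for $a(v,Tv)$. Two small things are cleaner in your version. Your block identity $a(u,v)=a(u_0,v_0)+a(\varphi,\varphi)\,u(b)v(b)$ replaces the paper's term-by-term expansion on $I_1$ and $I_2$: the cross terms vanish because orthogonality applies to the flipped function $\widetilde v_0$ (equal to $v_0$ on $I_1$ and $-v_0$ on $I_2$), which lies in $V_0$. And $T\circ T=\mathrm{Id}$ gives the bijectivity that the paper only asserts.

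Part (ii) takes a genuinely different route. The paper works at the ODE level: $Au=0$ forces $u$ to be affine on each $I_i$. The boundary conditions, continuity at $b$ and the flux condition $\sigma_1u'(b^-)=\sigma_2u'(b^+)$ then reduce everything to the scalar equation $a_1\bigl(\sigma_1-|\sigma_2|\tfrac{b}{1-b}\bigr)=0$. You stay variational instead. Testing with $\widetilde u_0$ kills $u_0$ for every contrast, so $\ker A\subseteq\operatorname{span}\{\varphi\}$ always, and $\varphi\in\ker A$ exactly when $a(\varphi,\varphi)=0$.

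Your route reuses one identity for both assertions. It gives the whole dichotomy (trivial kernel off the critical ratio, one-dimensional kernel at it) in one stroke. It also avoids the distributional step ``$Au=0$ implies $u$ piecewise affine'' and the integration by parts that extracts the flux condition. The paper's computation, in exchange, displays the transmission conditions explicitly. It shows the critical ratio as the exact compatibility between continuity and flux balance at $b$.

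A cosmetic point: in (ii) you write $Tu_0$, although $\operatorname{sign}a(\varphi,\varphi)=0$ in the critical case. Since $u_0(b)=0$ this is harmless, and what you mean is $\widetilde u_0$.
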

\begin{proof}
The bilinear form associated with Problem~\eqref{ClassicalVar} is
	\[
	a(u,v)=\int_I \sigma(x)\,u'(x)v'(x)\,\mathrm{d}x,
	\qquad u,v\in H^1_0(I).
	\]

	\noindent {(i)} 
First, $v_0$ is $\sigma$-orthogonal to $\varphi$. Indeed, since $\varphi$ is
piecewise affine on $I_1$ and $I_2$, we have $\varphi''=0$ on each subinterval and
\[
\varphi'(x)=\frac1b \ \text{for }x\in I_1,
\qquad
\varphi'(x)=-\frac1{1-b} \ \text{for }x\in I_2.
\]
Therefore,
\[
\int_I \sigma(x)\,v_0'(x)\,\varphi'(x)\,\mathrm{d}x
=
\sigma_1\int_{0}^{b} v_0'(x)\,\frac1b\,\mathrm{d}x
-
|\sigma_2|\int_{b}^{1} v_0'(x)\,\Bigl(-\frac1{1-b}\Bigr)\,\mathrm{d}x.
\]
Computing the integrals gives
\[
\sigma_1\int_{0}^{b} v_0'(x)\,\frac1b\,\mathrm{d}x
=
\frac{\sigma_1}{b}\bigl(v_0(b)-v_0(0)\bigr),
\qquad
|\sigma_2|\int_{b}^{1} v_0'(x)\,\Bigl(-\frac1{1-b}\Bigr)\,\mathrm{d}x
=
-\frac{|\sigma_2|}{1-b}\bigl(v_0(1)-v_0(b)\bigr).
\]
Since $v_0\in H^1_0(I)$ we have $v_0(0)=v_0(1)=0$, and by construction
$v_0(b)=0$ (see \eqref{decc}). Hence both terms vanish and we obtain
\[
\int_I \sigma(x)\,v_0'(x)\,\varphi'(x)\,\mathrm{d}x = 0.
\]

	For $\varepsilon\in\{-1,1\}$, let us define the linear operator $T:H^1_0(I)\to H^1_0(I)$ by
	\[
	Tv :=
	\begin{cases}
		\phantom{-}v_0 + \varepsilon\,v(b)\varphi, & \text{in } I_1,\\
		-\,v_0 + \varepsilon\,v(b)\varphi, & \text{in } I_2.
	\end{cases}
	\]
	
By construction, $T$ belongs to $\mathcal{L}(H^1_0(I))$, and we can easily check that $T$ is bijective.

Now, let us compute $a(v,Tv)$. We have
\[
a(v,Tv)=\int_{I}\sigma(x)\,v'(x)(Tv)'(x)\,\mathrm{d}x=\sigma_1\int_{I_1}v'(x)(Tv)'(x)\,\mathrm{d}x-|\sigma_2|\int_{I_2}v'(x)(Tv)'(x)\,\mathrm{d}x,
\]
with \[
v' = v_0' + v(b)\varphi', \qquad
(Tv)'=
\begin{cases}
\phantom{-}	v_0' + \varepsilon v(b)\varphi', & \text{in } I_1,\\
	-\,v_0' + \varepsilon v(b)\varphi', & \text{in } I_2.
\end{cases}
\]
On $I_1$, we expand the product:
\[
v'(Tv)'=\bigl(v_0'+v(b)\varphi'\bigr)\bigl(v_0'+\varepsilon v(b)\varphi'\bigr)
=(v_0')^2+(\varepsilon+1)v(b)\,v_0'\varphi'
+\varepsilon v(b)^2(\varphi')^2.
\]
Thus
\[
\sigma_1\int_{I_1}v'(x)(Tv)'(x)\,\mathrm{d}x
=\sigma_1\int_{I_1}|v_0'(x)|^2\,\mathrm{d}x
+(\varepsilon+1)\sigma_1 v(b)\int_{I_1}v_0'(x)\varphi'(x)\,\mathrm{d}x
+\varepsilon\sigma_1 |v(b)|^2\int_{I_1}|\varphi'(x)|^2\,\mathrm{d}x.
\]

On $I_2$, we similarly expand:
\[
v'(Tv)'=\bigl(v_0'+v(b)\varphi'\bigr)\bigl(-v_0'+\varepsilon v(b)\varphi'\bigr)
=-(v_0')^2+(\varepsilon-1)v(b)\,v_0'\varphi'
+\varepsilon v(b)^2(\varphi')^2,
\]
and therefore
\[
|\sigma_2|\int_{I_2}v'(x)(Tv)'(x)\,\mathrm{d}x
=-|\sigma_2|\int_{I_2}|v_0'(x)|^2\,\mathrm{d}x
+(\varepsilon-1)|\sigma_2| v(b)\int_{I_2}v_0'(x)\varphi'(x)\,\mathrm{d}x
+\varepsilon|\sigma_2| |v(b)|^2\int_{I_2}|\varphi'(x)|^2\,\mathrm{d}x.
\]

We now use the $\sigma$--orthogonality of $v_0$ and $\varphi$,  so that
\[
(\varepsilon+1)\sigma_1\int_{I_1}v_0'(x)\varphi'(x)\,\mathrm{d}x
+(\varepsilon-1)|\sigma_2|\int_{I_2}v_0'(x)\varphi'(x)\,\mathrm{d}x
=0.
\]
Then, we obtain
\[
a(v,Tv)
=\sigma_1\int_{I_1} |v_0'(x)|^2\,\mathrm{d}x+|\sigma_2|\int_{I_2} |v_0'(x)|^2\,\mathrm{d}x
+\varepsilon |v(b)|^2\left(
\sigma_1\int_{I_1}|\varphi'(x)|^2\,\mathrm{d}x-|\sigma_2|\int_{I_2}|\varphi'(x)|^2\,\mathrm{d}x
\right).
\]

It remains to compute the last integrals. Since
\[
\int_{I_1}|\varphi'(x)|^2\,\mathrm{d}x=\int_0^b \frac1{b^2}\,\mathrm{d}x=\frac1b
\qquad \text{and}\qquad
\int_{I_2}|\varphi'(x)|^2\,\mathrm{d}x=\int_b^1 \frac1{(1-b)^2}\,\mathrm{d}x=\frac1{1-b},
\]
we get
\[
a(v,Tv)
\dis=\sigma_1\int_{I_1} |v_0'(x)|^2\,\mathrm{d}x+|\sigma_2|\int_{I_2} |v_0'(x)|^2\,\mathrm{d}x
+\varepsilon |v(b)|^2\left(\frac{\sigma_1}{b}-\frac{|\sigma_2|}{1-b}\right).
\]
In addition, we have
\begin{equation}
	\sigma_1\int_{I_1} |v_0'(x)|^2\,\mathrm{d}x+|\sigma_2|\int_{I_2} |v_0'(x)|^2\,\mathrm{d}x\geq \min\{	\sigma_1,|\sigma_2|\} \int_{I} |v_0'(x)|^2\,\mathrm{d}x = \min\{	\sigma_1,|\sigma_2|\} \| v_0'\|_{L^2(I)}^2.
\end{equation}

	At this stage, we choose $\varepsilon\in\{-1,1\}$ such that 
	$$\varepsilon \left(\frac{\sigma_1}{b}-\frac{|\sigma_2|}{1-b}\right)>0\Leftrightarrow \varepsilon  \left(1 - \dfrac{|\sigma_2|}{\sigma_1} \dfrac{b}{1-b} \right)>0\Leftrightarrow \varepsilon=\operatorname{sign}\left( 1 - \dfrac{|\sigma_2|}{\sigma_1} \dfrac{b}{1-b}\right).$$

Let us assume that $
\dfrac{|\sigma_2|}{\sigma_1}\neq\dfrac{1-b}{b}.
$ Then, there exists $c>0$ such that
	\[
	|a(v,Tv)|\ge
c\bigl(\|v_0'\|_{L^2(I)}^2+|v(b)|^2\bigr).
	\]
	Since \(v = v_0 + v(b)\varphi\) with \(v_0(b)=0\), the quantity
$
	\|v_0'\|_{L^2(I)}^2 + |v(b)|^2
$
	defines a norm equivalent to
$
	\|v\|_{H_0^1(I)}^2.
$
	Hence, there exists \(\alpha>0\) such that
	\[
	\|v_0'\|_{L^2(I)}^2 + |v(b)|^2 \ge \alpha\,\|v\|_{H_0^1(I)}^2,
	\]
	which implies that $a$ is T-coercive.
	% In particular, the operator $A$ is
%	injective, and Problem~\eqref{ClassicalVar} is well-posed in the Hadamard sense.
	Since the bilinear form $a$ is continuous and $T$-coercive on $H_0^1(I)$, the
	associated operator $A : H_0^1(I) \to H^{-1}(I)$ is an isomorphism. Therefore,
	Problem~\eqref{ClassicalVar} is well-posed in the Hadamard sense.
	\medskip
	
	\noindent {(ii)}  
Now, let us assume that
$
\dfrac{|\sigma_2|}{\sigma_1}=\dfrac{1-b}{b}.
$
 Let $u\in H^1_0(I)$ satisfy $Au=0$. Then $u$ is affine on each
subinterval $I_i$, and we may write
\[
u(x)=
\begin{cases}
	a_1x+b_1, & x\in I_1,\\
	a_2x+b_2, & x\in I_2.
\end{cases}
\]
Using the boundary conditions $u(0)=u(1)=0$, we obtain
\[
u(0)=b_1=0,
\qquad
u(1)=a_2+b_2=0 \ \Longleftrightarrow\ b_2=-a_2.
\]
Hence,
\[
u(x)=
\begin{cases}
	a_1x, & x\in I_1,\\
	a_2x-a_2=a_2(x-1), & x\in I_2.
\end{cases}
\]

Since $u\in H^1_0(I)\subset C^0(\overline I)$, the function is continuous at
$x=b$, and thus
\[
u(b^-)=u(b^+)
\quad\Longleftrightarrow\quad
a_1 b = a_2(b-1)=-a_2(1-b).
\]
Therefore,
\begin{equation}\label{eq:rel-a2-a1}
	a_2=-\frac{b}{1-b}\,a_1.
\end{equation}

It remains to use the equation $Au=0$ to obtain the transmission condition on
the flux. For any $v\in H^1_0(I)$, we have
\[
0=a(u,v)
=\sigma_1\int_0^b u'(x)v'(x)\,\mathrm{d}x+\sigma_2\int_b^1 u'(x)v'(x)\,\mathrm{d}x
=\sigma_1u'(b^-)\,v(b)-\sigma_2u'(b^+)\,v(b),
\]
where we used that $u''=0$ on each subinterval and that $v(0)=v(1)=0$.
Then, we deduce the transmission condition
\[
\sigma_1u'(b^-)=\sigma_2u'(b^+).
\]
As $u'(x)=a_1$ on $I_1$ and $u'(x)=a_2$ on $I_2$, hence
\[
\sigma_1 a_1=\sigma_2 a_2.
\]
Substituting \eqref{eq:rel-a2-a1} gives
\[
\sigma_1 a_1=\sigma_2\Bigl(-\frac{b}{1-b}a_1\Bigr)
\quad\Longleftrightarrow\quad
a_1\left(\sigma_1-|\sigma_2|\frac{b}{1-b}\right)=0.
\]
Under the present assumption $\dfrac{|\sigma_2|}{\sigma_1}=\dfrac{1-b}{b}$, we
have $\sigma_1-|\sigma_2|\dfrac{b}{1-b}=0$, so $a_1$ is free. Therefore, all
solutions are parametrized by $a_1\in\mathbb R$, with $a_2$ given by
\eqref{eq:rel-a2-a1}. Writing $c:=u(b)=a_1 b$, we obtain
\[
u(x)=
\begin{cases}
	c\,\dfrac{x}{b}, & x\in I_1,\\[6pt]
	c\,\dfrac{1-x}{1-b}, & x\in I_2,
\end{cases}
= c\,\varphi(x).
\]
%where $\varphi$ is the piecewise affine function introduced above. 
Hence
$
\ker(A)=\operatorname{span}\{\varphi\}.
$

Finally, if
$\dfrac{|\sigma_2|}{\sigma_1}\neq\dfrac{1-b}{b}$, then
$\sigma_1-|\sigma_2|\dfrac{b}{1-b}\neq0$ and the previous identity forces
$a_1=0$, hence $a_2=0$ and $u\equiv0$. This shows that $A$ is injective away from
the critical ratio.
\end{proof}

\begin{rem}
 This classical result highlights two important facts: first, the local problem
naturally involves a single interface profile $\varphi$; second, the coercivity
properties are governed by a sharp algebraic condition on the coefficients.
These features stand in contrast with the nonlocal case, where the operator
creates interactions between the two subdomains that are not confined to the
interface. This leads to a weaker notion of
\text{T-coercivity}.
\end{rem}

\subsection{Nonlocal counterpart  of Problem \eqref{ClassicalProblem}}
%The Fractional Analogue and a New Decoupled Nonlocal Model
%Two Nonlocal Models: Intuitive and Decoupled
%Fractional Extension and New Nonlocal Construction

In this subsection, we study Problem \eqref{ClassicalProblem} in a nonlocal setting. 

Unlike the classical case, where only $\sigma_1$
and $\sigma_2$ suffice, the nonlocal nature of the fractional operator necessitates an additional parameter $\sigma_3$ to account for interactions across $I_1$
and $I_2$.

 The intuitive fractional counterpart of the classical variational formulation writes
\begin{equation} \label{FractionalVF}
    \left\{ 
    \begin{array}{lll}
    	\text{Find}\;\; u\in\widetilde{H}^s(I)\;\;\text{such that:} \\
    		a^s_{\underline{\sigma}}(u,v)=\dis\int_I f v, \quad \forall v\in\widetilde{H}^s(I),
    \end{array}
    \right.
\end{equation}
where \begin{equation}\label{FractionalBF}
	a^s_{\underline{\sigma}}(u,v):=\frac{C(s)}{2} \int_\R \int_\R \underline{\sigma}(x,y) \frac{(\widetilde{u}(x)-\widetilde{u}(y))(\widetilde{v}(x)-\widetilde{v}(y))}{|x-y|^{1+2s}} \, \mathrm{d}y \, \mathrm{d}x,
\end{equation}
$C(s):=\dfrac{2^{2 s} s \Gamma\left(s+\frac{1}{2}\right)}{\sqrt{\pi} \Gamma(1-s)}$,
 $\underline{\sigma}(x,y)=\left\{\begin{array}{lll}
	\sigma_1 & \text{if}\; (x,y)\in I_1\times [I_1\cup (\R\setminus I)],\\
	\sigma_2 & \text{if}\; (x,y)\in I_2\times [I_2\cup (\R\setminus I)],\\
	\sigma_3 & \text{if}\; (x,y)\in I_1\times I_2,
\end{array}  \right. \;\text{ and } \; \underline{\sigma}(x,y)=\underline{\sigma}(y,x) $ .

\smallbreak
The variational formulation \eqref{FractionalVF} is associated with the integral fractional
Laplacian under the homogeneous exterior condition; see, for instance,
\cite{DaouLaam,LisAl,MolRadRaf} and references therein.

\smallbreak

$\widetilde{H}^s(I)$ is the classical fractional Sobolev space given by $\widetilde{H}^s(I):=\{w\in{H}^s(I)\;;\; \widetilde{w}\in {H}^s(\R)\}$, where $\widetilde{w}$ is the continuation of $w$ by 0 to $\R\setminus I$. The space is equipped with the norm
$$
\|w\|^2_{\widetilde{H}^s(I)}=\frac{C(s)}{2}\int_\R \int_\R  \frac{|\widetilde{w}(x)-\widetilde{w}(y)|^2}{|x-y|^{1+2s}} \mathrm{d}y \, \mathrm{d}x.
$$
%Let us go back to the fractional variational formulation \eqref{FractionalVF},

We recall that $\widetilde H^s(I)$ is the natural energy space associated with
the Dirichlet problem for the integral fractional Laplacian.
{Furthermore, for $i=1,2$, we denote
	$$
	|w_0|^2_{{H}^s(I_i)}:=\frac{C(s)}{2}\int_{I_i} \int_{I_i} \frac{|{w_0}(x)-{w_0}(y)|^2}{|x-y|^{1+2s}} \mathrm{d}y \, \mathrm{d}x.
	$$
	} For more details about fractional Sobolev spaces, we refer, e.g., to
	\cite{DiNezzaPalatucciValdinoci,BucVal2016,AcostaBor} and references cited therein.

\begin{comment}
	Since $s>\frac12$, the traces of $w_0\in H_e^s(I)$ at $0$ and $1$ are well defined and vanish, and by assumption $w_0(b)=0$. Hence
	\[
	w_0=0 \quad \text{on } \partial I_1=\{0,b\},
	\qquad
	w_0=0 \quad \text{on } \partial I_2=\{b,1\}.
	\]
	Therefore, by the fractional Poincar\'e inequality on each interval $I_i$, there exists $C>0$ such that
	\[
	\|w_0\|_{L^2(I_i)}^2 \le C\, |w_0|_{H^s(I_i)}^2,
	\qquad i=1,2.
	\]
	Consequently,
	\[
	\|w_0\|_{H^s(I_i)}^2
	=
	\|w_0\|_{L^2(I_i)}^2+|w_0|_{H^s(I_i)}^2
	\le C\,|w_0|_{H^s(I_i)}^2,
	\qquad i=1,2.
	\]
	Combining this with \eqref{2.14}--\eqref{2.15}, we obtain
	\[
	\int_{I_1}\int_{I_2}\frac{|w_0(x)-w_0(y)|^2}{|x-y|^{1+2s}}\,dy\,dx
	\le
	C\Big(|w_0|_{H^s(I_1)}^2+|w_0|_{H^s(I_2)}^2\Big),
	\]
	which, together with \eqref{2.11} and \eqref{2.12}, proves \eqref{2.9}.
\end{comment}

\medbreak
Problem \eqref{FractionalVF} is difficult to treat directly due to the nonlocal nature of the interactions, and even more challenging in the presence of a sign-changing coefficient \(\underline{\sigma}(x,y)\).
In particular, no complete theoretical analysis is yet available for the general case. To overcome this, we adopt a simplified model by removing the interface interaction term, i.e., we set \( \sigma_3 = 0 \). Under this assumption, the bilinear form \(a^s_{\underline{\sigma}}\) becomes weakly \(T\)-coercive, {\it that is}, there exist $ K \in \mathcal{K}(\widetilde{H}^s(I))$, $T \in \mathcal{L}(\widetilde{H}^s(I))$ bijective,  $\exists \alpha,\beta>0$, $\forall v \in \widetilde{H}^s(I)$,
$$|a^s_{\underline{\sigma}}(v, \mathrm{~T} v)| \geq \alpha\|v\|_{\widetilde{H}^s(I)}^2-\beta\|\mathrm{K} v\|_{\widetilde{H}^s(I)}^2.$$
The weak T-coercivity property implies that Problem~\eqref{FractionalVF}
is well-posed in the Fredholm sense.

\medbreak
{Before stating the main theorem, we introduce a few preliminary results that
	will be used in its proof.}

\begin{prop}\label{ClassicalDecomp}
	Let $s\in(\tfrac12,1)$ so that pointwise evaluation at $x=b$ is well-defined for 
	functions in $\widetilde H^s(I)$.  
	Let $\psi^s \in \widetilde H^s(I)$ be any function satisfying 
$
	\psi^s(b)=1.
$
	Then, any $w\in \widetilde H^s(I)$ admits a unique decomposition
	\begin{equation}\label{decomp-vs}
		w = w_0 + w(b)\, \psi^s,
	\end{equation}
	where $w_0\in\widetilde H^s(I)$ satisfies $w_0(b)=0$.
\end{prop}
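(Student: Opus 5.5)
The proof splits into existence and uniqueness, and the only analytic input is that point evaluation at $b$ is a well-defined continuous linear functional on $\widetilde H^s(I)$ when $s\in(\tfrac12,1)$.

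\emph{Continuity of point evaluation.} Since $\widetilde w\in H^s(\R)$ and $s>\tfrac12$, the one-dimensional Sobolev embedding $H^s(\R)\hookrightarrow C^{0,s-1/2}(\R)$ (see \cite{DiNezzaPalatucciValdinoci}) applies. It gives a continuous representative of $w$ together with the bound
\[
|w(b)|\le C\,\|\widetilde w\|_{H^s(\R)}\le C'\,\|w\|_{\widetilde H^s(I)}.
\]
The second inequality uses that, on $\widetilde H^s(I)$, the seminorm in the paper controls the full $H^s(\R)$ norm, by the fractional Poincaré inequality for functions vanishing outside the bounded set $I$. Hence $\ell:w\mapsto w(b)$ belongs to $(\widetilde H^s(I))'$.

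\emph{Existence.} Given $w\in\widetilde H^s(I)$, I set $w_0:=w-w(b)\,\psi^s$.
\begin{itemize}
\item $w_0\in\widetilde H^s(I)$, because it is a linear combination of two elements of this linear space.
\item By linearity of $\ell$ and $\psi^s(b)=1$, we get $w_0(b)=w(b)-w(b)\psi^s(b)=0$.
\end{itemize}
Thus \eqref{decomp-vs} holds. I will also record that the map $w\mapsto w_0$ is bounded, $\|w_0\|\le(1+C'\|\psi^s\|)\|w\|$, since this continuity is what later T-coercivity arguments will need.

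\emph{Uniqueness.} Suppose $w=w_0+\lambda\psi^s=\widehat w_0+\mu\psi^s$ with $w_0(b)=\widehat w_0(b)=0$. Evaluating both decompositions at $b$ gives $\lambda=\mu=w(b)$, and then $w_0=\widehat w_0$. Equivalently, $\widetilde H^s(I)=\ker\ell\oplus\operatorname{span}\{\psi^s\}$, and $\ker\ell$ is closed because $\ell$ is continuous.

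\emph{Main obstacle.} The only delicate point is justifying the pointwise evaluation and its continuity, which requires the embedding for $s>\tfrac12$. The paper's statement takes this as given, so I would simply cite it.
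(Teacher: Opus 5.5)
Your proof is correct and follows the same route as the paper: you define $w_0:=w-w(b)\,\psi^s$ for existence and evaluate at $b$ for uniqueness. Your extra justification that $w\mapsto w(b)$ is continuous (via $H^s(\R)\hookrightarrow C^{0,s-1/2}(\R)$ and the fractional Poincaré inequality) goes beyond the paper, which builds this into the hypothesis; so does the resulting bound on $w\mapsto w_0$, which is the kind of estimate implicitly needed later, e.g.\ in Proposition~\ref{NormEquiv}.
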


\begin{proof}
	Since $s>\tfrac12$, the trace $w(b)$ is well-defined.  
	Let us define
$
	w_0 := w - w(b)\, \psi^s.
$
As $\psi^s \in \widetilde H^s(I)$, we have $w_0\in \widetilde H^s(I)$, and using $\psi^s(b)=1$ we obtain
$
	w_0(b) =w(b) - w(b) \psi^s(b) = 0,
$
	which proves existence of the decomposition \eqref{decomp-vs}.	For uniqueness, let us suppose that
	$
w = w_0 + c\, \psi^s = \widetilde w_0 + d\, \psi^s,
$
	with $w_0(b)=\widetilde w_0(b)=0$.  
	Evaluating at $x=b$ gives
$
	w(b) = c \psi^s(b) = c$, $
	w(b) = d \psi^s(b) = d,
$
	hence $c=d=w(b)$ and therefore $w_0 = \widetilde w_0$.  
	Thus the decomposition is unique.
\end{proof}

 \begin{prop}\label{NormWithoutIntermediate}
	Let $s\in (\tfrac12,1)$.	Let $w_0\in \widetilde{H}^s(I) $ such that $w_0(b)=0$. Then, there exists $C>0$ such that
	\begin{equation} \label{NormControl-eq1-init}
		\|w_0\|^2_{\widetilde{H}^s(I)}\leq C\left( 	|w_0|^2_{{H}^s(I_1)}+|w_0|^2_{{H}^s(I_2)}+\int_{I}\int_{I^c}\frac{|w_0(x)|^2}{|x-y|^{1+2s}} \, \mathrm{d}y \, \mathrm{d}x\right),
	\end{equation}
	which yields
	\begin{equation}\label{NormControl}
		\|w_0\|^2_{\widetilde{H}^s(I)}\leq C\left( 	\|w_0\|^2_{\widetilde{H}^s(I_1)}+	\|w_0\|^2_{\widetilde{H}^s(I_2)} \right).
	\end{equation}
\end{prop}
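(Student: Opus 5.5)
Since $\widetilde w_0$ vanishes outside $I$, the plan is to decompose $\R\times\R$ according to $I_1$, $I_2$ and $I^c$. The double integral defining $\|w_0\|^2_{\widetilde H^s(I)}$ then splits into the diagonal blocks $I_1\times I_1$ and $I_2\times I_2$, the exterior blocks $I\times I^c$ together with their symmetric counterparts (the block $I^c\times I^c$ contributes nothing), and the cross blocks $I_1\times I_2$ together with $I_2\times I_1$. Hence
\[
\|w_0\|^2_{\widetilde H^s(I)}=|w_0|^2_{H^s(I_1)}+|w_0|^2_{H^s(I_2)}+C(s)\int_I\int_{I^c}\frac{|w_0(x)|^2}{|x-y|^{1+2s}}\,\mathrm dy\,\mathrm dx+C(s)\int_{I_1}\int_{I_2}\frac{|w_0(x)-w_0(y)|^2}{|x-y|^{1+2s}}\,\mathrm dy\,\mathrm dx .
\]
Everything therefore reduces to bounding the cross term by the other three quantities.

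To handle the cross term, I would use $|w_0(x)-w_0(y)|^2\le 2|w_0(x)|^2+2|w_0(y)|^2$. For $x\in I_1$ and $y\in I_2$ we have $|x-y|=y-x\ge b-x$, so
\[
\int_{I_2}\frac{\mathrm dy}{|x-y|^{1+2s}}\le \int_b^{\infty}\frac{\mathrm dy}{(y-x)^{1+2s}}=\frac{(b-x)^{-2s}}{2s},
\]
and symmetrically for $y\in I_2$. The cross term is thus bounded by weighted integrals $\int_{I_1}|w_0(x)|^2 (b-x)^{-2s}\,\mathrm dx$ and $\int_{I_2}|w_0(y)|^2(y-b)^{-2s}\,\mathrm dy$. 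Since $w_0(b)=0$, $w_0(0)=0$ and $w_0(1)=0$, the restriction of $w_0$ to each $I_i$ vanishes at both endpoints of $I_i$.

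The key step, and the main obstacle, is a fractional Hardy inequality on an interval. For $s\in(\frac12,1)$ and $g\in H^s(I_i)$ vanishing at the endpoints, it reads
\[
\int_{I_i}\frac{|g(x)|^2}{\operatorname{dist}(x,\partial I_i)^{2s}}\,\mathrm dx\le C\,\|g\|^2_{H^s(I_i)}.
\]
This is classical for $s\ne\frac12$: for $s>\frac12$ it holds exactly for functions with zero trace, and equivalently it expresses that the extension by zero of such a $g$ lies in $\widetilde H^s(I_i)$. I would quote it from the references cited earlier, e.g. \cite{Leoni} or \cite{DiNezzaPalatucciValdinoci}. Near $b$ it gives the weighted integral bound, and the $L^2$ parts are controlled by the same norm.

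It then remains to bound $\|w_0\|^2_{L^2(I_i)}$, which enters $\|w_0\|_{H^s(I_i)}$, by the right-hand side of \eqref{NormControl-eq1-init}. For this I would use $|x-y|\le 2$ for $x\in I$ and $y\in(1,2)$, so that
\[
\int_I\int_{I^c}\frac{|w_0(x)|^2}{|x-y|^{1+2s}}\,\mathrm dy\,\mathrm dx\ge c\,\|w_0\|^2_{L^2(I)} .
\]
Collecting the estimates yields \eqref{NormControl-eq1-init}. For \eqref{NormControl}, I would interpret $\|w_0\|_{\widetilde H^s(I_i)}$ as the norm of the zero extension of $w_0|_{I_i}$, which the Hardy inequality shows to be finite. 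That norm dominates $|w_0|^2_{H^s(I_i)}$, and its exterior part $\int_{I_i}\int_{I_i^c}|w_0(x)|^2|x-y|^{-1-2s}\,\mathrm dy\,\mathrm dx$ dominates the corresponding piece of the $I\times I^c$ integral because $I^c\subset I_i^c$. Summing over $i=1,2$ gives \eqref{NormControl}.
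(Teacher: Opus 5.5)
Your proof is correct and follows the paper's argument. Both use the same splitting of $\|w_0\|^2_{\widetilde H^s(I)}$ into the $I_i\times I_i$, $I\times I^c$ and $I_1\times I_2$ blocks, the bound $|w_0(x)-w_0(y)|^2\le 2|w_0(x)|^2+2|w_0(y)|^2$ with the explicit kernel integral, a Hardy inequality on each $I_i$ using $w_0(0)=w_0(b)=w_0(1)=0$, and $I^c\subset I_i^c$ to pass to \eqref{NormControl}. The one difference is that the paper applies Hardy's inequality in seminorm form, $\int_{I_i}|w_0(x)|^2\operatorname{dist}(x,\partial I_i)^{-2s}\,\mathrm{d}x\le C|w_0|^2_{H^s(I_i)}$ (valid since $2s>1$, citing Dyda and Grisvard), whereas you use the full-norm version and absorb $\|w_0\|^2_{L^2(I)}$ into the exterior term via $\int_{I^c}|x-y|^{-1-2s}\,\mathrm{d}y\ge 2^{-1-2s}$ on $I$, which is an equally valid way to close the estimate.
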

\begin{proof}
	Let us write
	\begin{equation} \label{NormControl-eq1}
		\|w_0\|^2_{\widetilde{H}^s(I)}=	|w_0|^2_{{H}^s(I_1)}+|w_0|^2_{{H}^s(I_2)}+C(s)\int_{I}\int_{I^c}\frac{|w_0(x)|^2}{|x-y|^{1+2s}} \, \mathrm{d}y \, \mathrm{d}x+C(s)\int_{I_1}\int_{I_2}\frac{|w_0(x)-w_0(y)|^2}{|x-y|^{1+2s}} \, \mathrm{d}y \, \mathrm{d}x.
	\end{equation}
	On one side, we have
	\begin{equation}\label{NormControl-eq2}
		\int_{I}\int_{I^c}\frac{|w_0(x)|^2}{|x-y|^{1+2s}} \, \mathrm{d}y \, \mathrm{d}x\leq 	\int_{I_1}\int_{I_1^c}\frac{|w_0(x)|^2}{|x-y|^{1+2s}} \, \mathrm{d}y \, \mathrm{d}x+	\int_{I_2}\int_{I_2^c}\frac{|w_0(x)|^2}{|x-y|^{1+2s}} \, \mathrm{d}y \, \mathrm{d}x.
	\end{equation}
%	Then, \eqref{NormControl-eq1} becomes
%	\begin{equation} \label{NormControl-eq3}
%		\|w_0\|^2_{\widetilde{H}^s(I)}\leq	\|w_0\|^2_{\widetilde{H}^s(I_1)}+\|w_0\|^2_{\widetilde{H}^s(I_2)}+C(s)\int_{I_1}\int_{I_2}\frac{|w_0(x)-w_0(y)|^2}{|x-y|^{1+2s}} \, \mathrm{d}y \, \mathrm{d}x.
%	\end{equation}
	On the other side, we have
	\begin{equation} \label{NormControl-eq4}
		\int_{I_1}\int_{I_2}\frac{|w_0(x)-w_0(y)|^2}{|x-y|^{1+2s}} \, \mathrm{d}y \, \mathrm{d}x\leq 2	\int_{I_1}\int_{I_2}\frac{|w_0(x)|^2}{|x-y|^{1+2s}} \, \mathrm{d}y \, \mathrm{d}x+2	\int_{I_2}\int_{I_1}\frac{|w_0(y)|^2}{|x-y|^{1+2s}} \, \mathrm{d}x \, \mathrm{d}y.
	\end{equation}
	For any $x\in I_1$,
	$$
	\int_{I_2}\frac{\mathrm{d}y}{|x-y|^{1+2s}} \leq 	\int_{I_1^c}\frac{\mathrm{d}y}{|x-y|^{1+2s}}=\frac{1}{2s}\left[\frac{1}{x^{2s}}+\frac{1}{(b-x)^{2s}}\right]\leq \frac{1}{s\operatorname{dist}(x,\partial I_1)^{2s}}.
	$$
	Furthermore, for any $y\in I_2$,
	$$
	\int_{I_1}\frac{\mathrm{d}x}{|x-y|^{1+2s}} \leq 	\int_{I_2^c}\frac{\mathrm{d}x}{|x-y|^{1+2s}}=\frac{1}{2s}\left[\frac{1}{(b-y)^{2s}}+\frac{1}{(1-y)^{2s}}\right]\leq \frac{1}{s\operatorname{dist}(y,\partial I_2)^{2s}}.
	$$
	As $w_0(0)=w_0(b)=w_0(1)=0$, Hardy's inequality (see, for instance, see \cite{Dyda} and \cite[Theorem 1.4.4.4]{Grisvard}) yields
	\begin{equation} \label{NormControl-eq5}
		\int_{I_1}\int_{I_2}\frac{|w_0(x)|^2}{|x-y|^{1+2s}} \, \mathrm{d}y \, \mathrm{d}x\leq 	\int_{I_1}\frac{|w_0(x)|^2}{s\operatorname{dist}(x,\partial I_1)^{2s}} \, \mathrm{d}x\leq C_1  |w_0|^2_{{H}^s(I_1)}
	\end{equation}
	and 
	\begin{equation} \label{NormControl-eq6}
		\int_{I_2}\int_{I_1}\frac{|w_0(y)|^2}{|x-y|^{1+2s}} \, \mathrm{d}x \, \mathrm{d}y\leq 	\int_{I_2}\frac{|w_0(y)|^2}{s\operatorname{dist}(y,\partial I_2)^{2s}} \, \mathrm{d}y\leq C_2  |w_0|^2_{{H}^s(I_2)}.
	\end{equation}
	This completes the proof.
\end{proof}

 \begin{prop}\label{NormEquiv}
For any $v^s \in \widetilde H^s(I)$ with $s\in(\tfrac{1}{2},1)$, we decompose
$
v^s = v_0^s + v^s(b)\,\psi^s,
$
where $v_0^s \in \widetilde H^s(I)$ and $v_0^s(b)=0$. Then, 
\begin{equation}
	\label{norm-equivalence}
	\|v^s\|_{\widetilde H^s(I)}^{2}
	\;\asymp\;
	\|v_0^s\|_{\widetilde H^s(I)}^{2}
	\;+\; |v^s(b)|^{2}\,|\lambda(s,I)|^{2},
\end{equation}
where 	$
0<\lambda(s,I):=\|\psi^{s}\|_{\widetilde{H}^{s}(I)}<+\infty.
$
%	see Remark \ref{Norm-phis}.
	%	By Proposition~\ref{cv-phi}, $\varphi^s-\varphi \to 0$ as $s\to 1^-$, so the choice of interface lift does not affect the validity of the argument, only the constants.
\end{prop}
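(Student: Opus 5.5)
The two bounds follow from the triangle inequality and from the continuity of point evaluation at $b$; the constants will depend on $s$ and on the chosen lifting $\psi^s$.

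\textbf{Upper bound.} Write $v^s = v_0^s + v^s(b)\psi^s$, as allowed by Proposition~\ref{ClassicalDecomp}. The triangle inequality and $(a+b)^2\le 2a^2+2b^2$ give
\[
\|v^s\|^2_{\widetilde H^s(I)} \le 2\|v_0^s\|^2_{\widetilde H^s(I)} + 2|v^s(b)|^2\,\|\psi^s\|^2_{\widetilde H^s(I)} = 2\|v_0^s\|^2_{\widetilde H^s(I)} + 2|v^s(b)|^2\lambda(s,I)^2 .
\]

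\textbf{Lower bound.} The key tool is the Sobolev embedding $\widetilde H^s(I)\hookrightarrow C^0(\overline I)$ for $s>\tfrac12$. This is the same fact used in Proposition~\ref{ClassicalDecomp} to make sense of $w(b)$. It yields a constant $C_s>0$ such that
\[
|w(b)|\le \|w\|_{L^\infty}\le C_s\|w\|_{\widetilde H^s(I)} \quad \text{for all } w\in\widetilde H^s(I).
\]
Applied to $w=v^s$, this gives
\[
|v^s(b)|^2\lambda(s,I)^2\le C_s^2\lambda(s,I)^2\,\|v^s\|^2_{\widetilde H^s(I)} .
\]
Next, $v_0^s = v^s - v^s(b)\psi^s$, so by the triangle inequality
\[
\|v_0^s\|^2_{\widetilde H^s(I)}\le 2\|v^s\|^2 + 2|v^s(b)|^2\lambda^2 \le 2\bigl(1+C_s^2\lambda^2\bigr)\|v^s\|^2 .
\]
Adding the two estimates gives the reverse inequality with constant $2+3C_s^2\lambda(s,I)^2$.

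\textbf{Positivity and finiteness of $\lambda(s,I)$.} Finiteness holds because $\psi^s\in\widetilde H^s(I)$. Positivity holds because $\psi^s(b)=1$ forces $\psi^s\neq 0$, and $\|\cdot\|_{\widetilde H^s(I)}$ is a norm: the zero extension vanishes outside $I$, so a vanishing seminorm forces the function to be identically zero.

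\textbf{Main obstacle.} The only delicate point is justifying the embedding constant $C_s$ for the $\widetilde H^s$ norm as defined in the paper, which is a Gagliardo seminorm on $\R$ of the zero extension. I would handle it by citing the fractional Poincaré inequality, which makes this seminorm equivalent to the full $H^s(\R)$ norm of $\widetilde w$. I would then apply the Morrey/Sobolev embedding $H^s(\R)\hookrightarrow C^{0,s-1/2}(\R)$ from \cite{DiNezzaPalatucciValdinoci}. These constants blow up as $s\to\tfrac12^+$, so the equivalence is stated for each fixed $s$, as is implicit in the symbol $\asymp$.
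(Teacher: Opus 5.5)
Your proof is correct and complete. The first inequality is just the triangle inequality. The reverse inequality follows from the boundedness of $\delta_b : v\mapsto v(b)$ on $\widetilde H^s(I)$ for $s>\tfrac12$, and your constant $2+3C_s^2\lambda(s,I)^2$ checks out.

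The paper states Proposition~\ref{NormEquiv} without any proof, so there is no argument of the paper's to compare with. The paper does invoke exactly the fact you rely on, boundedness of the trace map $\delta_b$, in the proof of Theorem~\ref{WeakTCoercivity}. That is also the only place the proposition is used, and only at fixed $s$, so your $s$- and $\psi^s$-dependent constants are all that is needed.

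The ``main obstacle'' you flag is milder than you suggest. The Poincar\'e step is immediate, because the double integral over $\R\times\R$ contains the exterior term $2\int_I|w(x)|^2\omega(x)\,\mathrm{d}x$. Here $\omega(x)=\frac{1}{2s}\bigl(x^{-2s}+(1-x)^{-2s}\bigr)\ge \frac{2^{2s}}{s}$ on $I$, so $\|w\|_{L^2(I)}$ is controlled by $\|w\|_{\widetilde H^s(I)}$. After that, either the embedding $H^s(\R)\hookrightarrow C^0(\R)$ or the one-line Fourier estimate
\[
|w(b)|\lesssim\int_{\R}|\mathcal F\widetilde w|\le \bigl\|(1+|\xi|^2)^{s/2}\mathcal F\widetilde w\bigr\|_{L^2(\R)}\,\bigl\|(1+|\xi|^2)^{-s/2}\bigr\|_{L^2(\R)}
\]
finishes the argument. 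The second factor is finite precisely because $s>\tfrac12$.
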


Now, let us state and prove the main result of this paragraph.  
For later use, we introduce the notation
\[\Phi_i=
\Phi_i(\psi^s,s)
:= \frac{2}{C(s)}|\psi^s|_{H^s(I_i)}^{2}
+ 2\int_{I_i} |\psi^s(x)|^{2}\,\omega(x)\,\mathrm{d}x 
> 0,\qquad i=1,2,
\]
where
$
\omega(x)
:= \dis\int_{I^{c}} \frac{\mathrm{d}y}{|x-y|^{1+2s}}.
$

\begin{theo}\label{WeakTCoercivity}
	Let $s\in(\tfrac12,1)$ and $f\in H^{-s}(I)$.  
For a fixed $\psi^s\in\widetilde H^s(I)$ with $\psi^s(b)=1$, let
us define
	\[
T v^s=
\begin{cases}
\phantom{-}	v_0^s + \varepsilon\,v^s(b)\,\psi^s, &\text{ in } I_1, \\[3pt]
	-\,v_0^s + \varepsilon\, v^s(b)\,\psi^s, &\text{ in } I_2, %\\[3pt]
	%0, & \text{ in } \R\setminus I,
\end{cases}
\]
with $\varepsilon\in\{-1,1\}$. 
	  Assume that $\sigma_3=0$ and $\dfrac{|\sigma_2|}{\sigma_1}\neq\dfrac{\Phi_1}{\Phi_2}$, where $\Phi_1,\Phi_2$  are defined as above. 
	Then, the form $a^s_{\underline{\sigma}}$ is weakly T-coercive  and Problem \eqref{FractionalVF} is well-posed in the Fredholm sense.
\end{theo}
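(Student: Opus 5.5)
We mimic the proof of Theorem~\ref{thm:local-Tcoercive}. Write $v=v^s=v_0+v(b)\psi^s$ as in Proposition~\ref{ClassicalDecomp}. With $\sigma_3=0$, the form splits into three parts: the self-interactions on $I_i\times I_i$, which carry the weight $\sigma_i$; the exterior interactions on $I_i\times(\R\setminus I)$, where $\widetilde v(y)=0$ and so only $v(x)\,(Tv)(x)\,\omega(x)$ appears, again with weight $\sigma_i$; and the $I_1\times I_2$ block, which vanishes. Thus
\[
a^s_{\underline\sigma}(v,Tv)=\sum_{i=1,2}\sigma_i\Big(\tfrac{C(s)}{2}\int_{I_i}\int_{I_i}\frac{(v(x)-v(y))(Tv(x)-Tv(y))}{|x-y|^{1+2s}}+C(s)\int_{I_i}v\,Tv\,\omega\Big).
\]
The key point is that $T$ only appears through its restriction to each $I_i$. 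On $I_1$ we have $Tv=v_0+\varepsilon v(b)\psi^s$, and on $I_2$ we have $Tv=-v_0+\varepsilon v(b)\psi^s$. The sign $-1$ on $I_2$ compensates $\sigma_2<0$ exactly as in the local case, even though $Tv$ itself is discontinuous at $b$. Boundedness and bijectivity of $T$ on $\widetilde H^s(I)$ come first. Since $v_0(b)=0$, the function $v_0\chi_{I_1}-v_0\chi_{I_2}$ belongs to $\widetilde H^s(I)$ by Proposition~\ref{NormWithoutIntermediate}, with norm controlled by $\|v_0\|_{\widetilde H^s(I)}$. Moreover $(Tv)(b)=\varepsilon v(b)$, so $T\circ T=\mathrm{Id}$ and $T$ is bijective.

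Expanding each block gives diagonal terms and cross terms. The diagonal terms are
\[
\sigma_1 Q_1(v_0)+|\sigma_2|Q_2(v_0)+\varepsilon\,\tfrac{C(s)}{2}\,|v(b)|^2\big(\sigma_1\Phi_1-|\sigma_2|\Phi_2\big),
\]
where $Q_i(v_0)=|v_0|^2_{H^s(I_i)}+C(s)\int_{I_i}|v_0|^2\omega$. The factor $\tfrac{C(s)}2\Phi_i$ is exactly the $\psi^s$-energy on $I_i$, which explains the definition of $\Phi_i$. The cross terms are
\[
v(b)\Big[(1+\varepsilon)\sigma_1 B_1(v_0,\psi^s)+(\varepsilon-1)|\sigma_2|B_2(v_0,\psi^s)\Big],
\]
with $B_i$ the corresponding bilinear expressions restricted to $I_i$. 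Unlike the local case, there is no orthogonality, since $\psi^s$ is arbitrary. This is where the weakness of the coercivity enters. Choose $\varepsilon=\operatorname{sign}(\sigma_1\Phi_1-|\sigma_2|\Phi_2)$, which is nonzero by the hypothesis $|\sigma_2|/\sigma_1\neq\Phi_1/\Phi_2$. Then the diagonal part is bounded below by
\[
\min(\sigma_1,|\sigma_2|)\big(Q_1(v_0)+Q_2(v_0)\big)+c_0|v(b)|^2,\qquad c_0>0.
\]
By Proposition~\ref{NormWithoutIntermediate}, $Q_1(v_0)+Q_2(v_0)\gtrsim\|v_0\|^2_{\widetilde H^s(I)}$. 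By Proposition~\ref{NormEquiv}, this lower bound is $\gtrsim\|v\|^2_{\widetilde H^s(I)}$.

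The main obstacle is the cross term. I would not try to absorb it by Young's inequality, since no smallness is available. Instead I would treat it as compact: it is bounded by $C\,|v(b)|\,\|v_0\|_{\widetilde H^s(I)}$. Define the rank-one operator
\[
Kv:=v(b)\,\psi^s.
\]
It is bounded on $\widetilde H^s(I)$ because the trace at $b$ is continuous for $s>\tfrac12$, and it is compact since its range is finite-dimensional. Young's inequality then gives
\[
\text{cross}\le\tfrac{\alpha}{2}\|v\|^2+C'|v(b)|^2=\tfrac{\alpha}{2}\|v\|^2+\beta\|Kv\|^2_{\widetilde H^s(I)},
\]
using $\|Kv\|=|v(b)|\lambda(s,I)$ with $\lambda(s,I)>0$. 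This yields
\[
|a^s_{\underline\sigma}(v,Tv)|\ge a^s_{\underline\sigma}(v,Tv)\ge\tfrac{\alpha}{2}\|v\|^2-\beta\|Kv\|^2,
\]
which is the weak T-coercivity.

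Finally, weak T-coercivity means that $T^*A$ is a coercive operator plus a compact one. Hence $A$ is Fredholm of index zero, which is the claimed well-posedness in the Fredholm sense. The hypothesis on $\Phi_1/\Phi_2$ only matters to keep $c_0>0$. Strictly speaking, even the $|v(b)|^2$ term could be thrown into $K$; I would still present the argument with the sign choice, to mirror the local theorem.
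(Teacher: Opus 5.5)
Your proposal is correct and is essentially the paper's proof: the same block splitting of $a^s_{\underline\sigma}(v,Tv)$ (with the $I_1\times I_2$ block killed by $\sigma_3=0$), the same choice $\varepsilon=\operatorname{sign}(\sigma_1\Phi_1-|\sigma_2|\Phi_2)$, Proposition~\ref{NormWithoutIntermediate} for the $v_0$-part, and the same Young-inequality treatment of the mixed term, where the large $|v(b)|^2$ coefficient goes into the rank-one compact operator $Kv=v(b)\psi^s$, closed by Proposition~\ref{NormEquiv}. Your extra remarks are sound: the check $T\circ T=\mathrm{Id}$, and the observation that the $|v(b)|^2$ term could itself be absorbed into $K$, so the contrast hypothesis is not really needed for the Fredholm conclusion. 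Two harmless slips to fix: $Tv$ is actually continuous at $b$, because $v_0(b)=0$; and the $I_2$ mixed term carries the factor $(1-\varepsilon)$, not $(\varepsilon-1)$.
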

 \begin{rem}\label{Remark-lift}
	In contrast with the local case, where the interface profile is uniquely
	determined, the fractional setting admits many possible liftings. 	In fact, the quantity $\dfrac{\Phi_1}{\Phi_2}$ depends on the chosen function $\psi^s$ and
		thus on the specific construction of the operator $T$ used in the proof.		
		Different choices of $\psi^s$ yield different sufficient conditions for weak T-coercivity, since the proof depends on the associated operator $T$. This dependence concerns the criterion obtained by this argument, and not the definition of Problem~\eqref{FractionalVF} itself.

	Regarding the choice of  $\psi^s$, the piecewise-defined
	function
	\[
	\varphi^s(x):=
	\begin{cases}
		\dfrac{x^{s}}{b^{s}}, & x\in I_1,\\[4pt]
		\dfrac{(1-x)^{s}}{(1-b)^{s}}, & x\in I_2,\\[4pt]
		0, & \text{otherwise},
	\end{cases}
	\]
	is a convenient analytical choice due to its consistency
	with the local limit $s\to1^-$. In particular, $\varphi^s\in\widetilde H^s(I)$
	(see Remark~\ref{Norm-phis}). However, for this choice, no explicit expression
	for the ratio $\Phi_1(\varphi^s,s)/\Phi_2(\varphi^s,s)$ is available.
	
However, taking the classical lifting $\varphi$ instead,
 the quantities $\Phi_1(\varphi,s)$ and $\Phi_2(\varphi,s)$ can be
	computed explicitly. 	In this case, we obtain
	\[
	\frac{\Phi_1(\varphi,s)}{\Phi_2(\varphi,s)}
	=
	\frac{Q_1+R_1}{Q_2+R_2},
	\]
	where
	\[
	Q_1=\frac{b^{3-2s}}{b^2(1-s)(3-2s)}, 
	\qquad
	Q_2=\frac{(1-b)^{3-2s}}{(1-b)^2(1-s)(3-2s)},
	\]
	\[
	R_1=\frac{b^{1-2s}}{s(3-2s)}
	+\frac{(1-b)^{1-2s}\!\left(-2b^2s^2+3b^2s-b^2+2bs-b-1\right)+1}
	{b^2s(1-s)(1-2s)(3-2s)},
	\]
	and
	\[
	R_2=\frac{(1-b)^{1-2s}}{s(3-2s)}
	+\frac{b^{1-2s}\!\left(-2b^2s^2+4bs^2-2s^2-8bs+3b^2s+5s+3b-b^2-3\right)+1}
	{(1-b)^2s(1-s)(1-2s)(3-2s)}.
	\]
	Moreover, the following properties hold:
	\begin{itemize}
		\item $\displaystyle \lim_{s\to1^-} \frac{Q_1+R_1}{Q_2+R_2}=\frac{1-b}{b}$,
		which coincides with the classical transmission condition;
		\item if $b=\tfrac12$, then $\displaystyle \frac{Q_1+R_1}{Q_2+R_2}=1$ for any
		$s\in(\frac12,1)$.
	\end{itemize}
\end{rem}

\begin{proof}[Proof of Theorem \ref{WeakTCoercivity}] 	Recall that any $v^s\in \widetilde H^s(I)$ admits the unique decomposition
	\[
	v^s = v_0^s + v^s(b)\,\psi^s,
	\quad 
	v_0^s\in\widetilde H^s(I) \quad\text{and}\quad v_0^s(b)=0.
	\]
Let us choose
	\[
T v^s=
\begin{cases}
	\phantom{-}v_0^s + \varepsilon\,v^s(b)\,\psi^s, &\text{ in } I_1, \\[3pt]
	-\,v_0^s + \varepsilon\, v^s(b)\,\psi^s, &\text{ in } I_2, %\\[3pt]
	%0, & \text{ in } \R\setminus I,
\end{cases}
\]
where $\varepsilon\in\{-1,1\}$ to be chosen. It is straightforward to verify that $T\in\mathcal{L}(\widetilde H^s(I))$ and that $T$ is bijective.

We have
$$
Tv^s(x)-Tv^s(y)=
\left\{ 
\begin{array}{llll}
	[v_0^s(x)-v_0^s(y)]+\varepsilon v^s(b)[\psi^s(x)-\psi^s(y)],&x,y\in I_1,
	\\
	{[v_0^s(y)-v_0^s(x)]}+\varepsilon v^s(b)[\psi^s(x)-\psi^s(y)],&x,y\in I_2.
%		\\
%	{[v_0^s(x)+v_0^s(y)]}+\varepsilon v^s(b)[\psi^s(x)-\psi^s(y)],&x\in I_1,y\in I_2.
\end{array}
	\right.
$$
Since $\sigma_3=0$, interactions over $I_1\times I_2$
do not contribute, and
\begin{equation}
	\begin{array}{lll}
		a^s_{\underline{\sigma}}(v^s,Tv^s)&\dis=\frac{C(s)}{2} \int_\R \int_\R \underline{\sigma}(x,y) \frac{(v^s(x)-v^s(y))(Tv^s(x)-Tv^s(y))}{|x-y|^{1+2s}} \, \mathrm{d}y \, \mathrm{d}x	\\\\&\dis=\frac{C(s)}{2}\left(A_1+A_2+A_3+A_4+A_5\right),
	\end{array}
\end{equation}
where, %up to a positive multiplicative constant,
$$
A_1=\sigma_1	\int_{I_1}\int_{I_1}\frac{|v_0^s(x)-v_0^s(y)|^2}{|x-y|^{1+2s}} \, \mathrm{d}y \, \mathrm{d}x+2\sigma_1\int_{I_1}|v_0^s(x)|^2\omega(x) \, \mathrm{d}x,
$$
$$
A_2=|\sigma_2|	\int_{I_2}\int_{I_2}\frac{|v_0^s(x)-v_0^s(y)|^2}{|x-y|^{1+2s}} \, \mathrm{d}y \, \mathrm{d}x+2|\sigma_2|\int_{I_2}|v_0^s(x)|^2\omega(x) \, \mathrm{d}x,
$$
$$
A_3=\varepsilon |v^s(b)|^2\sigma_1	\int_{I_1}\int_{I_1}\frac{|\psi^s(x)-\psi^s(y)|^2}{|x-y|^{1+2s}} \, \mathrm{d}y \, \mathrm{d}x+2\varepsilon |v^s(b)|^2\sigma_1	\int_{I_1}|\psi^s(x)|^2\omega(x)\mathrm{d}x,
$$
$$
A_4=-\varepsilon |v^s(b)|^2|\sigma_2|	\int_{I_2}\int_{I_2}\frac{|\psi^s(x)-\psi^s(y)|^2}{|x-y|^{1+2s}} \, \mathrm{d}y \, \mathrm{d}x-2\varepsilon |v^s(b)|^2|\sigma_2|	\int_{I_2}|\psi^s(x)|^2\omega(x)\mathrm{d}x$$
and
$$
\begin{array}{lll}
	A_5&\dis=(1+\varepsilon)v^s(b)\sigma_1\left[\int_{I_1}\int_{I_1}\frac{(v_0^s(x)-v_0^s(y))(\psi^s(x)-\psi^s(y))}{|x-y|^{1+2s}} \, \mathrm{d}y \, \mathrm{d}x+2\int_{I_1}  v_0^s(x) \psi^s(x)\omega(x)\mathrm{d}x\right]
	\\\\&\dis+(1-\varepsilon)v^s(b)|\sigma_2|\left[\int_{I_2}\int_{I_2}\frac{(v_0^s(x)-v_0^s(y))(\psi^s(x)-\psi^s(y))}{|x-y|^{1+2s}} \, \mathrm{d}y \, \mathrm{d}x+2\int_{I_2}  v_0^s(x) \psi^s(x)\omega(x)\mathrm{d}x\right],
\end{array}
$$
with $\dis\omega(x)=\int_{I^c}\frac{\mathrm{d}y}{|x-y|^{1+2s}}$.

\medbreak
Thanks to Proposition \ref{NormWithoutIntermediate}, we have
$
A_1+A_2\geq C_1 	\|v^s_0\|^2_{\widetilde{H}^s(I)}.
$
Further, 
$
\begin{array}{lll}
A_3+A_4
%&=\varepsilon\dis v^s(b)^2\sigma_1\left[|\psi^s|^2_{{H}^s(I_1)}+2\int_{I_1}|\psi^s(x)|^2\omega(x)\mathrm{d}x\right]-\varepsilon v^s(b)^2|\sigma_2|\left[|\psi^s|^2_{{H}^s(I_2)}+2\int_{I_2}|\psi^s(x)|^2\omega(x)\mathrm{d}x\right]
%\\\\&
=
\varepsilon |v^s(b)|^2 \left( \sigma_1 \Phi_1 - |\sigma_2| \Phi_2\right).
\end{array}
$
At this stage, we choose $\varepsilon\in\{-1,1\}$ such that $A_3+A_4>0$, $that$ $is$
$$\varepsilon  \left( \sigma_1 \Phi_1 - |\sigma_2| \Phi_2\right)>0\Leftrightarrow \varepsilon  \left( 1 - \dfrac{|\sigma_2|}{\sigma_1} \dfrac{\Phi_2}{ \Phi_1} \right)>0\Leftrightarrow \varepsilon=\operatorname{sign}\left( 1 - \dfrac{|\sigma_2|}{\sigma_1} \dfrac{\Phi_2}{ \Phi_1}\right).$$
Now, let us assume $\dfrac{|\sigma_2|}{\sigma_1}\neq\dfrac{\Phi_1}{\Phi_2}$ so that $A_3+A_4=\mu |v^s(b)|^2$, with $\mu>0$.
It remains to control the mixed term $A_5$. Since $\varepsilon\in\{-1,1\}$, either
	$1+\varepsilon=0$ or $1-\varepsilon=0$, hence exactly one of the two contributions
	in $A_5$ vanishes. Denote by $i\in\{1,2\}$ the index of the remaining subdomain.
	Then $A_5$ can be written in the form
	\[
	A_5 = 2 v^s(b)\,|\sigma_i|\, \mathcal{B}_i(v_0^s,\psi^s),
	\]
	where 
	\[
	\mathcal{B}_i(v_0^s,\psi^s)
	:= \int_{I_i}\!\int_{I_i}\frac{(v_0^s(x)-v_0^s(y))(\psi^s(x)-\psi^s(y))}{|x-y|^{1+2s}}\,dy\,dx
	+2\int_{I_i} v_0^s(x)\psi^s(x)\,\omega(x)\,dx .
	\]
	By Cauchy--Schwarz in the Gagliardo term and in the weighted $L^2$-term, we obtain
	\begin{equation}\label{eq:Bi-est}
		\begin{array}{lll}
		|\mathcal{B}_i(v_0^s,\psi^s)|
	&\le\dis
		C_2\,\Bigl(|v_0^s|_{H^s(I_i)}^2 + \int_{I_i}|v_0^s(x)|^2\omega(x)\,dx\Bigr)^{1/2}
		\Bigl(|\psi^s|_{H^s(I_i)}^2 + \int_{I_i}|\psi^s(x)|^2\omega(x)\,dx\Bigr)^{1/2}
		\\\\
		&\leq\dis	C_2'\,\|v_0^s\|_{\widetilde H^s(I)}\|\psi^s\|_{\widetilde H^s(I)}.
	\end{array}
	\end{equation}
	%using the fact that $\dis\int_{I^c}\cdots\leq \dis\int_{I_i^c}\cdots$. 
	Then, we get
$
|A_5|\le C_3\,|v^s(b)|\,\|v_0^s\|_{\widetilde H^s(I)}\,\|\psi^s\|_{\widetilde H^s(I)}.
$

By Young's inequality, for any $\eta>0$,
\[
|A_5|
\le
\eta\,\|v_0^s\|_{\widetilde H^s(I)}^{2}
+ C_\eta\,|v^s(b)|^{2}\,\|\psi^s\|_{\widetilde H^s(I)}^{2}.
\]
Since $s>\tfrac12$, the trace map $\delta_b:\widetilde H^s(I)\to\mathbb R$,
$\delta_b(v^s)=v^s(b)$, is bounded. Define the rank-one operator
\[
K:\widetilde H^s(I)\to\widetilde H^s(I),\qquad Kv^s := v^s(b)\,\psi^s.
\]
Then $\mathrm{Ran}(K)=\mathrm{span}\{\psi^s\}$ is one-dimensional, hence
$K\in\mathcal K(\widetilde H^s(I))$. Moreover,
\[
\|Kv^s\|_{\widetilde H^s(I)}^{2}
= |v^s(b)|^{2}\,\|\psi^s\|_{\widetilde H^s(I)}^{2}.
\]
Therefore,
\[
|A_5|
\le
\eta\,\|v_0^s\|_{\widetilde H^s(I)}^{2}
+ C_\eta\,\|K v^s\|_{\widetilde H^s(I)}^{2}.
\]
Combining this estimate with the lower bound previously obtained for $A_1+A_2+A_3+A_4$,
and choosing $\eta>0$ small enough, we have
$$
|a^s_{\underline\sigma}(v^s,T v^s)|\geq   C_4	\|v^s_0\|^2_{\widetilde{H}^s(I)}+\mu |v^s(b)|^2- C_\eta\,\|K v^s\|_{\widetilde H^s(I)}^{2}
$$
Finally, thanks to Proposition \ref{NormEquiv}, there exist $\alpha,\beta>0$ such that
\[
|a^s_{\underline\sigma}(v^s,T v^s)|
\ge
\alpha \|v^s\|_{\widetilde H^s(I)}^{2}
-\beta \|K v^s\|_{\widetilde H^s(I)}^{2},
\qquad \forall v^s\in\widetilde H^s(I).
\]
This proves the weak T-coercivity of $a^s_{\underline\sigma}$ with respect to
$T$, and the Fredholm well-posedness of Problem~\eqref{FractionalVF} follows.
%conclude that t
	\end{proof}
%	\textcolor{gray}{Or should we choose something like		
%		$
%		K v:=\iota(v),
%		$	
%		where $\iota: \widetilde{H}^s(I) \hookrightarrow L^2(I)$ is the compact embedding?
%		I think it is unnecessary in our case.}

\subsection{New nonlocal model}

Under the assumption $\sigma_3=0$, Theorem~\ref{WeakTCoercivity} yields a weak
T-coercivity result for the original global nonlocal formulation, and hence
Fredholm well-posedness for Problem~\eqref{FractionalVF}. Now, we introduce a
different but related formulation, namely a reconstructed interface formulation
inspired by the decomposition of Proposition~\ref{ClassicalDecomp}. The purpose
of this formulation is not to replace the previous well-posedness result, but
rather to isolate the interface effect through the scalar unknown $u^s(b)$ and
to prepare the construction of an efficient numerical method. In particular,
the uniqueness of $u^s(b)$ in the reconstructed model should be distinguished
from the Fredholm well-posedness of the original global nonlocal problem. The
reconstructed formulation is intended to capture the interface structure of the
local problem and to provide a suitable framework for studying the limit
$s\to 1^-$.

\smallbreak
Rather than solving the global nonlocal problem~\eqref{FractionalVF} directly,
we propose a decoupled formulation on the two subdomains $I_1$ and $I_2$, which
allows us to isolate the interface effects while preserving the nonlocal
structure of the operator.
To this end, we consider the pair of independent subproblems
\begin{equation} \label{SubFractionalProblem1}
	\left\{   
	\begin{array}{rcl}
		\sigma_1(-\Delta)^s u_1 & = f & \text{in } I_1,\\
		u_1 &= 0 & \text{in }\mathbb{R}\setminus I_1 ,
	\end{array}     
	\right.
\end{equation}
and
\begin{equation} \label{SubFractionalProblem2}
	\left\{   
	\begin{array}{rcl}
		\sigma_2(-\Delta)^s u_2 & = f & \text{in } I_2,\\
		u_2 &= 0 & \text{in }\mathbb{R}\setminus I_2 .
	\end{array}     
	\right.
\end{equation}

The coupling between these two subproblems is recovered through an interface
lifting function $\psi^s\in\widetilde H^s(I)$ satisfying
\[
\psi^s(0)=\psi^s(1)=0,
\qquad 
\psi^s(b)=1,
\qquad 
\psi^s \longrightarrow \varphi \quad\text{as } s\to1^-,
\]
where $\varphi$ denotes the classical harmonic profile on~$I$.
For any $u^s\in\widetilde H^s(I)$, this choice yields the unique decomposition
\begin{equation}\label{us}
	u^s = u_0^s + u^s(b)\,\psi^s,
\end{equation}
where $u_0^s(b)=0$ and $u_0^s$ coincides with $u_1$ in $I_1$ and with $u_2$ in
$I_2$. In particular,
\[
u_0^s \in \widetilde H^s(I_1)\oplus \widetilde H^s(I_2).
\]

As mentioned in Remark \ref{Remark-lift}, our primary candidate for the interface function is the explicit family
$\{\varphi^s\}_{s\in(0,1)}$, defined by
\[
\varphi^s(x):=
\begin{cases}
	\dfrac{x^{s}}{b^{s}}, & x\in I_1,\\[4pt]
	\dfrac{(1-x)^{s}}{(1-b)^{s}}, & x\in I_2,\\[4pt]
	0, & \text{otherwise}.
\end{cases}
\]
\begin{figure}[H]
	\centering
	\includegraphics[scale=0.225]{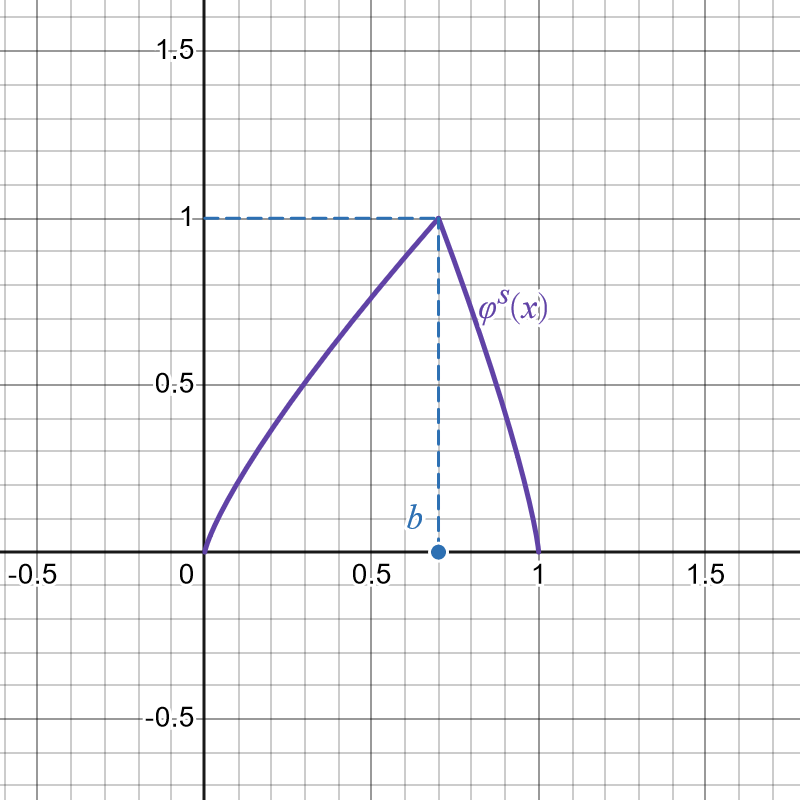}
	\caption{The function $\varphi^s$ for $b=0.7$ and $s=0.8$}
	\label{fig:phis}
\end{figure}

\begin{rem}
	A natural question is why one does not simply choose $\varphi$. Our viewpoint is that, to obtain a meaningful nonlocal-to-local limit, it is natural to start from a genuinely nonlocal lifting and recover $\varphi$ appearing only in the limit $s \to 1^-$.
\end{rem}
	
Let us recall that $\varphi^s\in\widetilde H^s(I)$
(see Remark~\ref{Norm-phis}). Although $\varphi^s$ is not $s$-harmonic, it satisfies the required interface
conditions and enjoys several advantages:  
(i) it converges to the classical harmonic function $\varphi$ as $s\to1^-$;  
(ii) its explicit dependence on $s$ facilitates the analysis of the limiting
behavior;  
(iii) numerical experiments indicate that alternative admissible choices of
$\psi^s$ lead to the same asymptotic behavior as \mbox{$s\to1^{-}$}.

By construction, the composite field $u^s=u_0^s+u^s(b)\varphi^s$ satisfies, in
each subdomain $I_k$,
\[
\sigma_k(-\Delta)^s u^s
= f + \sigma_k\,u^s(b)\,(-\Delta)^s\varphi^s,
\]
together with homogeneous exterior conditions on $\mathbb{R}\setminus I$.

We recall the variational formulations associated with
\eqref{SubFractionalProblem1} and \eqref{SubFractionalProblem2}. For $k=1,2$,
they read
\begin{equation} \label{FractionalVFi}
	\left\{ 
	\begin{array}{lll}
		\text{Find } u_k\in\widetilde H^s(I_k) \text{ such that:} \\
		\sigma_k\,a^s(u_k,v)=\displaystyle\int_{I_k} f v,
		\quad \forall v\in\widetilde H^s(I_k),
	\end{array}
	\right.
\end{equation}
where
\[
a^s(u_k,v)
:=\frac{C(s)}{2}\int_{\mathbb R}\!\int_{\mathbb R}
\frac{(\widetilde u_k(x)-\widetilde u_k(y))
	(\widetilde v(x)-\widetilde v(y))}
{|x-y|^{1+2s}}\,\mathrm dy\,\mathrm dx.
\]

The function $u_0^s$ is thus obtained by solving two decoupled subdomain problems.
Their well-posedness follows from the Lax--Milgram theorem, after the obvious rewriting
of the second problem in terms of $|\sigma_2|$; see, for instance,
\cite[Chapter~1]{BorthagarayThesis}.  The
scalar correction term $u^s(b)\varphi^s$ is then introduced to recover the
global behavior across the interface.

To determine the value of $u^s(b)$, we impose that the full function $u^s$
satisfies the global variational formulation \eqref{FractionalVF}.
By Theorem \ref{WeakTCoercivity}, this problem \eqref{FractionalVF} is well-posed in the Fredholm sense as
\begin{equation}\label{Hypo}
	\sigma_3=0\quad\text{and}\quad\dfrac{|\sigma_2|}{\sigma_1}\neq\dfrac{\Phi_1(\varphi^s,s)}{\Phi_2(\varphi^s,s)}.
\end{equation}
 Testing with
$v=\varphi^s$ and using the linearity of $a^s_{\underline\sigma}$ yields
\begin{equation}\label{Equ}
a^s_{\underline\sigma}(u^s,\varphi^s)
= a^s_{\underline\sigma}(u_0^s,\varphi^s)
+ u^s(b)\,a^s_{\underline\sigma}(\varphi^s,\varphi^s)
= \int_I f\varphi^s.
\end{equation}
By direct computation, one obtains
$
a_\sigma^s(\varphi^s,\varphi^s)
= \sigma_1 \Phi_1(\varphi^s,s)-|\sigma_2|\,\Phi_2(\varphi^s,s).
$
Thus, thanks to \eqref{Hypo}, we have
 \(a_\sigma^s(\varphi^s,\varphi^s)\neq 0\). Consequently, the quantity
\(u^s(b)\) is uniquely determined by \eqref{Equ}.

\noindent
Hence, the reconstructed solution
\[
u^s \in \widetilde H^s(I_1)\oplus \widetilde H^s(I_2)
\oplus \operatorname{span}\{\varphi^s\},
\]
is well-defined.

\begin{rem}\label{Norm-phis}
	Let $s\in [\frac12,1)$. Since $x^\alpha \in H^s(0,\lambda)$ for any $\lambda>0$ whenever
	$\alpha>s-\frac12$ (see  \cite[Lemma 4.1]{BiaBraZag}),
	we obtain $\varphi^s\in \widetilde H^s(I)$. In particular,
	\[
	0<\|\varphi^s\|_{\widetilde H^s(I)}<+\infty.
	\]
\end{rem}

\section{Finite element approximation}\label{Section3}
Now, let us introduce a standard conforming $\mathbb P_1$ finite element
discretization on the interval $I=(0,1)$; see, for instance,
\cite{CiarletFEM}. Let
\[
0=x_0<x_1<\cdots <x_M=b<\cdots <x_{N_h}<x_{N_h+1}=1
\]
be a uniform mesh of $I$, with mesh size $h$, chosen so that the interface
point $b$ coincides with the node $x_M$.

\begin{center}
	\begin{tikzpicture}[scale=1.5]
		% Draw the interval [0,1]
		\draw[thick, ->] (0,0) -- (6,0) node[right] {$x$};
		\foreach \x/\label in {0/0, 2/b, 5/1} {
			\draw (\x,0.1) -- (\x,-0.1) node[below] {$\label$};
		}
		% Mark the nodes
		\fill[blue] (0,0) circle (2pt) node[above] {$x_0$};
		\fill[blue] (2,0) circle (2pt) node[above] {$x_M$};
		\fill[blue] (5,0) circle (2pt) node[above] {$x_{N_h+1}$};
		% Discretization points
		\foreach \i in {0.5, 1, 1.5} {
			\fill[red] (\i,0) circle (2pt);
		}
		\foreach \i in {2.5, 3, 3.5, 4, 4.5} {
			\fill[red] (\i,0) circle (2pt);
		}
		\node at (3,-1) {Uniform discretization of $I$};
	\end{tikzpicture}
\end{center}

For $k=1,2$, we denote by $V_k^h\subset H^1(I_k)$ the finite element space of
continuous piecewise affine functions on the restriction of the mesh to $I_k$.
These spaces are generated by the standard nodal basis
$\{\phi_i\}_{i=1}^{N_h}$, where each basis function $\phi_i$ is the usual hat
function associated with the node $x_i$, namely
%The finite-dimensional subspaces 
%We denote by \( V_k^h \subset H^1(I_k) \) ($k=1,2$) the space of continuous piecewise linear functions on the mesh. They are spanned by the standard nodal basis \( \{\phi_i\}_{i=1}^{N_h} \), where each \( \phi_i \) is the ``tent'' function defined by:
\[
\phi_i(x) = 1 - \frac{|x - x_i|}{h}, \quad x \in I,
\]
with support \( \text{supp}(\phi_i) = [x_{i-1}, x_{i+1}] \). In $(0,b)$ (resp. $(b,1)$), we exclude the boundary basis functions \( \phi_0 \) and \( \phi_M \) (resp. \( \phi_M \) and \( \phi_{N_h+1} \)) to ensure compatibility with the continuous problems (homogeneous Dirichlet condition). Thus, the approximation is constructed solely over the interior nodes \( \{x_1, \dots, x_{N_h}\} \).
 
 \subsection{The old model} \label{Subsection3.1}
Here, we approximate \eqref{FractionalVF} with the discrete problem: 
 
\begin{equation} \label{FractionalDVFold1}
	\left\{
	\begin{array}{lll}
		\text{Find $\ud{u}_h \in V_h$ such that:}
		\\\dis
			a^s_{\underline{\sigma}}(\ud{u}^s_h,v_h)=
		\int_{0}^1f v_h, \quad\forall  v_h \in V_h,
	\end{array}
	\right.
\end{equation}
 where
 $$
	a^s_{\underline{\sigma}}(\ud{u}^s_h,v_h):= \frac{C(s)}{2} \int_\R \int_\R \underline{\sigma}(x,y) \frac{(\ud{u}^s_h(x)-\ud{u}^s_h(y))({v}_h(x)-{v}_h(y))}{|x-y|^{1+2s}} \, \mathrm{d}y \, \mathrm{d}x.
 $$
  Choosing $v_h=\phi_i$, we get
 
\begin{equation} \label{FractionalDVFold2}
 \frac{C(s)}{2} \int_{\mathbb{R}} \int_{\mathbb{R}}\underline{\sigma}(x,y) \frac{\left(\ud{u}^s_h(x)-\ud{u}^s_h(y)\right)\left(\phi_j(x)-\phi_j(y)\right)}{|x-y|^{1+2 s}} \, \mathrm{d} y \, \mathrm{d}x=\int_{0}^1 f(x) \phi_j (x)\, \mathrm{d}x, \quad j=1, \ldots, N_h.
\end{equation}

Since $u_h \in V_h$, we have $\dis \ud{u}^s_h(x)=\sum_{i=1}^{N_h}  \ud{u}^s_h(x_i) \phi_i(x) $. Therefore,  \eqref{FractionalDVFold2} is reduced to solve the linear system $\mathbb{A}_{\text{old}} \underline{U}_h^s=F$, where the elements of  $\mathbb{A}_{\text{old}} \in \mathbb{R}^{N_h \times N_h}$ are given by
 
\begin{equation} \label{ExpressionsAijOld}
 \mathbb{A}^{ij}_{\text{old}}=\frac{C(s)}{2} \int_{\mathbb{R}} \int_{\mathbb{R}}\underline{\sigma}(x,y) \frac{\left(\phi_i(x)-\phi_i(y)\right)\left(\phi_j(x)-\phi_j(y)\right)}{|x-y|^{1+2 s}} \, \mathrm{d} y \, \mathrm{d}x.
\end{equation}
The constant $C(s)$ is the same for all entries of the matrix. For clarity and to simplify the presentation, let us denote $\mathbb{B}:=\dfrac{2}{C(s)}\mathbb{A}_{\text{old}}$.
Moreover, the coefficients of $F \in \mathbb{R}^{N_h}$, denoted by $\left(F_1, \ldots, F_{N_h}\right)$, are defined as
\begin{equation}\label{ExpressionsFi}
F_j:=\int_{0}^1 f(x) \phi_j (x)\, \mathrm{d}x, \quad j=1, \ldots, N_h.
\end{equation}

It follows directly from  \eqref{ExpressionsAijOld} that the matrix \( \mathbb{B} \) is symmetric. Consequently, it suffices to compute only the entries \( \mathbb{B}_{ij} \) for \( j \geq i \), see Figure \ref{fig:matrixB}.
In the following, we present the explicit expressions of the matrix entries. These expressions are inspired by the computations carried out in \cite{BicHer}, where the authors derived them in the particular case \( \sigma_1 = \sigma_2 = \sigma_3 \). For the reader's convenience, the detailed computations are collected in the appendix; see Section~\ref{Appendix8}.
\vspace*{-3cm}
\begin{figure}[H]
	\centering
	\includegraphics[scale=1.0]{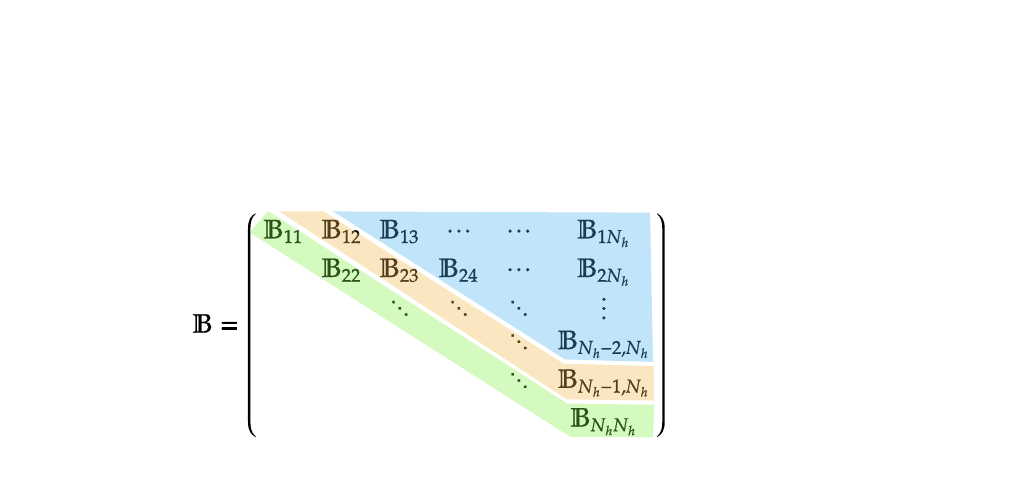}
	\caption{Structure of the stiffness matrix $\mathbb{B}$}
	\label{fig:matrixB}
\end{figure}

Let us denote,
\begin{itemize}
	\item [---] $H(h,s):=\dfrac{h^{1-2s}}{2s(1-s)(1-2s)(3-2s)}$ $(s\neq\dfrac12)$~;

	\item[---] $H_1(h,s):= \dfrac{h^{1-2s}}{s(3-2s)}$ ;

	\item[---] $H_2(h,s):=\left\{\begin{array}{lll}
		2H(h,s)[-2s^2+7s-7+2^{3-2s}] &\;\; \text{if} &s\neq\frac12,
		\\
		-5+8\log(2)&\;\; \text{if} &s=\frac12\;;
	\end{array} \right.$

	\item[---] $H_3(h,s):=\dfrac{h^{1-2s}}{(1-s)(3-2s)}$ ;
	
	\item[---] $H_4(h,s):=\left\{\begin{array}{lll}
		2H(h,s)[2s^2-5s+4-2^{2-2s}] &\;\; \text{if} &s\neq\frac12,
		\\
		3-4\log(2)&\;\; \text{if} &s=\frac12\;;
	\end{array} \right.$

	\item[---] $H_5(h,s):=H(h,s)[1-2s] $ ;

	\item[---] $H_6(h,s):= 2H(h,s)[1-2s][s-2+2^{1-2s}]
	$ ;
	
	\item[---] $H_7(h,s):=\left\{\begin{array}{lll}
	H(h,s)[4s^2-s(14-2^{4-2s})+13+3^{3-2s}-5\cdot2^{3-2s}] &\;\; \text{if} &s\neq\frac12,
		\\
		1-16\log(2)+9\log(3)&\;\; \text{if} &s=\frac12\;;
	\end{array} \right.$

	\item[---]  $\forall r>h$, $S_1(h,s,r):=\left\{\begin{array}{lll}
		\dfrac{2H(h,s)}{h^{3-2s}}[(h+r)^{3-2s}+2hr^{2-2s}[2s-3]-(r-h)^{3-2s}] &\;\; \text{if} &s\neq\frac12,
		\\
		\frac{1}{h^{2}}[-2 \log  \left(-h +r \right) \left(h -r \right)^{2}+2 \left(h +r \right)^{2} \log \left(h +r \right)-8 r \left(\log \left(r \right)+\frac{1}{2}\right) h]&\;\; \text{if} &s=\frac12\;;
	\end{array} \right.$	
	
	\item[---]  $\forall r>h$, $S_2(h,s,r):=\left\{\begin{array}{lll}
		\dfrac{H(h,s)}{h^{3-2s}}[r^{2-2s}[2hs-3h+2r]+(r-h)^{2-2s}[2hs-h-2r]] &\;\; \text{if} &s\neq\frac12,
		\\
		\frac{1}{h^{2}}[2 r \left(h -r \right) \log  \left(-h +r \right)+2 r\log  \left(r \right)  \left(-h +r \right)+h \left(h -2 r \right)]&\;\; \text{if} &s=\frac12\;;
	\end{array} \right.$

	\item[---] $\forall k \geq 2$,
	\[
L_1(h,s,k) :=
	\begin{cases}
	H(h,s)\Big[
		3k^{3-2s} - 2(k+1)^{3-2s} + (k+2)^{3-2s}
		- (k - 2s + 2)(k-1)^{2-2s} \\
		\hspace{4.5cm}  + (-4s + 6)k^{2 - 2s}
		- (k - 2s + 4)(k+1)^{2 - 2s}
		\Big] & \text{if } s \neq \frac{1}{2}, \\[1.5ex]
		(-k^2 + 1)\log(k - 1)
		 +[-3k^2 - 8k - 5]\log(k + 1)+ (k + 2)^2 \log(k + 2)
		\\
		\hspace{9cm}+ 3k[k + \tfrac{4}{3}] \log(k)
		& \text{if } s = \frac{1}{2}\;;
	\end{cases}
	\]

	\item[---] $\forall k \geq 2$,
	\[
L_2(h,s,k) :=
	\begin{cases}
	H(h,s)\Big[
		(k-2)^{3-2s}-2(k-1)^{3-2s}+3k^{3-2s}-(k+2s-4)(k-1)^{2-2s} \\
		\hspace{4.5cm} 	+(4s-6)k^{2-2s}-(k+2s-2)(k+1)^{2-2s}
		\Big] & \text{if } s \neq \frac{1}{2}, \\[1.5ex]
		\left(k -2\right)^{2} \log \left(k -2\right)+[-3 k^{2}+8 k -5] \log \left(k-1 \right)+\left(-k^{2}+1\right) \log \left(k +1\right)
		\\
		\hspace{9cm}+3  k [k -\frac{4}{3}]\log \left(k \right)
		& \text{if } s = \frac{1}{2} \text{ and } k\neq2,
		\\[1.5ex]
		4 \log (2)-3 \log (3) & \text{if } s = \frac{1}{2} \text{ and } k=2.
	\end{cases}
	\]

\end{itemize}

We have the following expressions for the elements of $\mathbb{B}$ :

(i) when $j=i$ :
$$
\mathbb{B}_{ii} =  \begin{cases} (\sigma_1 + \sigma_2)\big[H_1 + H_3\big] + 2\sigma_3\big[H_2 + H_4\big] & \quad\text{if } x_i=b, 
	\\
	(\sigma_1 + \sigma_3)\big[H_1 + H_2\big] + 2\sigma_1\big[H_3 + H_4\big] & \quad\text{if } x_{i+1}=b, 
	\\
	(\sigma_2 + \sigma_3)\big[H_1 + H_2\big] + 2\sigma_2\big[H_3 + H_4\big]  & \quad\text{if } x_{i-1}=b, 
	\\
	2\sigma_2 \big[H_1 + H_2+H_3 + H_4\big] +(\sigma_3-\sigma_2)S_1(h,s,x_i-b) & \quad\text{if } x_{i-1}>b, 
	\\
	2\sigma_1 \big[H_1 + H_2+H_3 + H_4\big] +(\sigma_3-\sigma_1)S_1(h,s,b-x_i) & \quad\text{if } x_{i+1}<b\;;
	 \end{cases}
$$

(ii) when $j=i+1$ :
$$
\mathbb{B}_{i,i+1} =  \begin{cases}    -\sigma_2H_3 +(\sigma_2 + \sigma_3)\big[H_5 + H_6\big] + \sigma_3H_7 & \quad\text{if } x_i=b, 
	\\     -\sigma_1H_3 +(\sigma_1 + \sigma_3)\big[H_5+ H_6\big] + \sigma_3H_7 & \quad\text{if } x_{i+1}=b,
	\\
	\sigma_2[-H_3 +2H_5 + 2H_6+H_7]-(\sigma_3-\sigma_2)S_2(h,s,x_{i+1}-b)  & \quad\text{if } x_i>b, 
\\
\sigma_1[-H_3+2H_5 + 2H_6+H_7]-(\sigma_3-\sigma_1)S_2(h,s,b-x_{i})  & \quad\text{if } x_{i+1}<b\;;
		 \end{cases}
	$$

(iii) when $k:=j-i\geq 2$ :

\bigbreak 
$
\mathbb{B}_{ij} =  \begin{cases}   \sigma_3L_1(h,s,k)+\sigma_2L_2(h,s,k) & \quad\text{if } x_i=b, 
	\\     \sigma_3L_1(h,s,k)+\sigma_1L_2(h,s,k) & \quad\text{if } x_{j}=b,
	\\
\widehat{\sigma}_{ij}[L_1(h,s,k)+L_2(h,s,k)]  & \quad\text{if } x_i,x_j\neq b,
\end{cases}
$ where $\widehat{\sigma}_{ij}:=\left\{\begin{array}{llll}
	\sigma_1&\text{if}& x_i,x_j<b,
	\\
	\sigma_2&\text{if}&x_i,x_j>b,
	\\
	\sigma_3&\text{if}& \text{otherwise}.
\end{array}   \right.$

\bigbreak

In the case \( \sigma_1 = \sigma_2 = \sigma_3 \), we recover the expressions given in \cite{BicHer}:  for any $s \neq \frac12$
$$
\mathbb{B}_{ij}= \begin{cases}
	H(h,s)[2^{4-2 s}-8] & \quad\text{if } j=i, 
	\\ 
		H(h,s)[3^{3-2 s}+7-2^{5-2 s}], & \quad\text{if } j=i+1,
	\\ H(h,s)[6 k^{3-2 s}+(k+2)^{3-2 s}+(k-2)^{3-2 s}-4(k+1)^{3-2 s}-4(k-1)^{3-2 s}], & \quad\text{if } k=j-i\geq 2 .\end{cases}
$$

\noindent Further, when $s=\frac12$ we have
$$
\mathbb{B}_{ij}= \begin{cases}
	8 \log(2) &  \quad\text{if } j=i, \\
 9 \log(3)-16 \log (2) &\quad\text{if } j=i+1, \\
(k-2)^2 \log (k-2)-4(k-1)^2 \log (k-1)	+6k^2 \log (k) & \\ \qquad\qquad\qquad-4(k+1)^2 \log (k+1)+(k+2)^2 \log (k+2)& \quad\text{if } k=j-i>2 \\ 56 \log (2)-36 \log (3) &\quad\text{if } j-i=2 .  \end{cases}
$$

  \subsection{The new model}\label{Subsection3.2}
 The corresponding discrete variational formulation for \eqref{FractionalVFi} is as follows : for $k=1,2$

\begin{equation} \label{FractionalDVF}
    \left\{ 
   \begin{array}{lll}
   	\text{Find}\;\; u^h_{k}\in V_k^h\;\;\text{such that:} \\
   \sigma_k	a^s(u^h_{k},v^h)=\dis\int_{I_k}  f v^h, \quad \forall v^h\in V^h_k,
   \end{array}
   \right.
\end{equation}
%where $V_h:=\{v\in \widetilde{H}^s(I) ;\;; v|_{[x_i,x_{i+1}]} \in\mathbb{P}_1\}$ where  Hence, 
The discrete solutions are $u_1^h(x)=\dis\sum_{i=1}^{M-1} u_1^h(x_i)\phi_i(x)$ and $u_2^h(x)=\dis\sum_{i=M+1}^{N_h} u_2^h(x_i)\phi_i(x)$.
%
%and the linear system we solve numerically is $\mathbb{A}U=F$, where
%$$
%\mathbb{A}_{i,j}=\frac{C(s)}{2} \int_\R \int_\R \underline{\sigma}(x,y) \frac{(\phi_i(x)-\phi_i(y))(\phi_j(x)-\phi_j(y))}{|x-y|^{1+2s}} \, \mathrm{d}y \, \mathrm{d}x
%$$
%and
%$$
%F_i=\int_I f \phi_i.
%$$
%We have checked that  the numerical solutions tend to the classical ones as $s\to 1$.
%
%
%
%
 Then, the approximated solution of the new model is 
{ \begin{equation}\label{uhs}
		\overline{u}^s_{h}=u^s_{0,h}+\overline{u}^s_{h}(b)\varphi^s=\dis\sum_{\underset{i\neq M}{i=1}}^{N_h} u^{s,i}_{{0,h}}\phi_i+\overline{u}^s_{h}(b)\varphi^s,
\end{equation}}
  where $u_{0,h}^s=u_1^h$ in $I_1$ and  $u_{0,h}^s=u_2^h$  in $I_2$. 
  
  Now, we  impose that the full function $\overline{u}^s_{h}$
  satisfies the global discrete variational formulation \eqref{FractionalDVFold1} with $\sigma_3=0$.
   Consequently, the corresponding linear system to solve is $
\pmb{\mathbb{K}}\boldsymbol{U_{0,h}^s=G}$, where

%\[
%B =
%\begin{bmatrix}
%	&&&D_1^1&0&\cdots&0\\
%	&	\mathbb{A}_1& & \vdots & \vdots & \ddots&\vdots \\
%		&&&D_1^{N_1}&0&\ldots&0\\
%D_1^1 &\cdots&D_1^{N_1} & c & D_2^1 &\cdots&D_2^{N_2}\\
%0&\cdots	 & 0& D_2^1 & &&\\
%\vdots &\ddots	 &\vdots & \vdots & &\mathbb{A}_2&\\
%0&\cdots	 &0 & D_2^{N_2} & &&
%\end{bmatrix}
%\]
%$$
%\mathbb{K} =
%\begin{bmatrix}
%	\begin{bmatrix}
%	\hspace*{0.05cm}&&	& &&& 	\hspace*{0.05cm}\\
%	&&	& \mathbb{A}_1 &&& \\
%	&&	& &&&
%	\end{bmatrix}
%	&\vspace*{0.12cm}
%	\begin{bmatrix}
%		D_1 \\
%		\vdots \\
%		D_{M-1}
%	\end{bmatrix}
%	&
%	\begin{bmatrix}
%	\hspace*{0.2cm}	0 &	& \cdots && 0\hspace*{0.2cm} \\
%		\vdots 	&& \ddots &	& \vdots \\
%		0 & 	&\cdots 	&& 0	
%	\end{bmatrix}
%	\\\vspace*{0.12cm}
%	\begin{bmatrix}
%	D_1 & \cdots & D_{M-1}
%	\end{bmatrix}
%	&
%	\begin{bmatrix}
%	\hspace*{.05cm}&c^*&\hspace*{.05cm}
%    \end{bmatrix}
%	&
%	\begin{bmatrix}
%	\hspace*{-.05cm}	D_{M+1} & \cdots & D_{N_h}	\hspace*{-0.02cm}
%	\end{bmatrix}
%	\\
%	\begin{bmatrix}
%		\hspace*{0.2cm}	0 && \cdots && 0	\hspace*{0.2cm} \\
%		\vdots && \ddots && \vdots \\
%		0 && \cdots && 0
%	\end{bmatrix}
%	&
%	\begin{bmatrix}
%		D_{M+1} \\
%		\vdots \\
%		D_{N_h}
%	\end{bmatrix}
%	&
%	\begin{bmatrix}
%	\hspace*{0.1cm}&&	& &&& 	\hspace*{0.1cm}\\
%&&	& \mathbb{A}_2 &	&& \\
%	& &	&&	&&
%	\end{bmatrix}
%\end{bmatrix}
%$$

\begin{figure}[H]
	\centering
	\includegraphics[scale=.9]{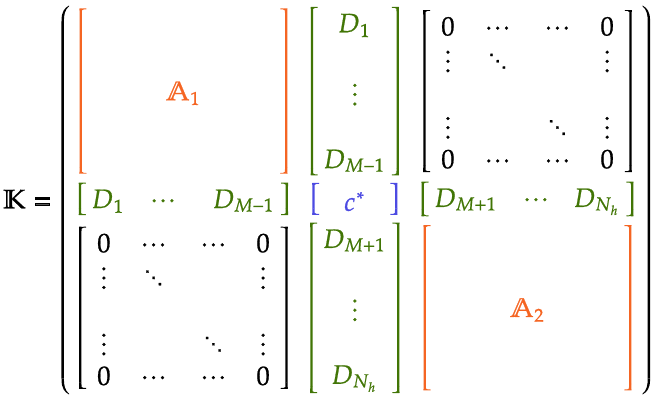}
	\caption{Structure of the stiffness matrix $\mathbb{K}$}
	\label{fig:matrixK}
\end{figure}

$\mathbb{A}_k$ ($k=1,2$) is the corresponding stiffness matrix to \eqref{FractionalDVF}, 
$$
\begin{array}{rclll}
 D_i&=&\dis \frac{C(s)}{2} \int_\R \int_\R \underline{\sigma}(x,y) \frac{(\varphi^s(x)-\varphi^s(y))(\phi_i(x)-\phi_i(y))}{|x-y|^{1+2s}} \, \mathrm{d}y \, \mathrm{d}x \quad \forall i\in\llbracket 1,N_h\rrbracket\setminus\{M\} ,\\\\
c^*&=&\dis\frac{C(s)}{2} \int_\R \int_\R \underline{\sigma}(x,y) \frac{|\varphi^s(x)-\varphi^s(y)|^2}{|x-y|^{1+2s}} \, \mathrm{d}y \, \mathrm{d}x,
\end{array}
$$
$G_i=\dis\int_I f \phi_i$ for any $i\in\llbracket 1,N_h\rrbracket\setminus\{M\} $  and $G_M=\dis\int_I f \varphi^s$. In the matrix $\mathbb{K}$, the positions of the entries $D$ and $c^*$ depend on the location of the interface. After solving the linear system 
$
\mathbb{K} U_{0,h}^s = G,
$
we obtain the full solution as
$
\overline{U}_h^s = U_{0,h}^s + W_h^s,
$
where 
$
W_h^s = \overline{U}_{h}^s(M) \cdot \big(\varphi^s(x_1), \dots, 0, \dots, \varphi^s(x_{N_h})\big)^T.
$
Here, the zero appears specifically at the index corresponding to the position of $b$. 

%This is because this was the concept of the new model where we proposed  $v(x)=v_s(x)+v(b)\varphi_s(x)$. 

%\bigbreak

\begin{rem}
	In order to implement $D_i$ and $c^*$ numerically, we interpolate $\varphi^s$ using a finite-dimensional approximation. Specifically, let  $\varphi_h^s$ denote the interpolant of $\varphi^s$ in the finite element space spanned by basis functions $\{\phi_j\}_{j=1}^{N_h}$, $i.e.$  $\varphi_h^s=\dis\sum_{j=1}^{N_h}\varphi^s(x_j)\phi_j$. Then, the discrete counterparts of $D_i$ and $c^*$ are obtained as follows:
	$$
	\begin{array}{rclll}
		D_{h,i}&=&\dis \frac{C(s)}{2} \dis\sum_{j=1}^{N_h}\varphi^s(x_j)\int_\R \int_\R \underline{\sigma}(x,y) \frac{(\phi_i(x)-\phi_i(y))(\phi_j(x)-\phi_j(y))}{|x-y|^{1+2s}} \, \mathrm{d}y \, \mathrm{d}x \quad \forall i\in\llbracket 1,N_h\rrbracket\setminus\{M\} ,\\\\
		c_h&=&\dis\frac{C(s)}{2} \dis\sum_{j=1}^{N_h}(\varphi^s(x_j))^2\int_\R \int_\R \underline{\sigma}(x,y) \frac{|\phi_j(x)-\phi_j(y)|^2}{|x-y|^{1+2s}} \, \mathrm{d}y \, \mathrm{d}x
		\\\\
		&& \hspace*{.5cm} +C(s)\dis \sum_{k=1}^{N_h-1} \sum_{j=k+1}^{N_h} \varphi^s(x_k) \varphi^s(x_j) \int_\R \int_\R \underline{\sigma}(x,y) \frac{(\phi_k(x)-\phi_k(y))(\phi_j(x)-\phi_j(y))}{|x-y|^{1+2s}} \, \mathrm{d}y \, \mathrm{d}x.
	\end{array}
	$$
However, we know that $ \|\varphi_h^s - \varphi^s\|_{\widetilde{H}^{s}(I)} \leq C h^{1-s}\| \varphi^s - \varphi \|_{H^1(I)}$ for some $C>0$ (see \cite[Subsubsection 3.3.1]{BorthagarayThesis}) and that $\| \varphi^s - \varphi \|_{H^1(I)} \lesssim 1-s$ (see Proposition \ref{cv-phi}).   	
		For sufficiently small $ h $ or $1-s$, $ \varphi_h^s $ converges to $ \varphi_s $ in the energy norm. 
%		In the following, we prove the convergence of $u_h^s$ to the exact solution of  \eqref{ClassicalProblem} as $ s \to 1^- $ and $h \to 0^+$, considering $ c^* $ and $ D_i $ (instead of $c_h$ and $D_{h,i}$).
%In the following, we focus on the simplified model introduced below, obtained by setting $D_i=0$ for all interior nodes, and study its convergence to the local solution as $s \to 1^-$ and $h \to 0^+$.
\end{rem}
 \subsection{The simplified new model}\label{Subsection3.3} 
 While the new model introduced in Subsection \ref{Subsection3.2} faithfully captures the nonlocal nature of the problem, the analysis of the simplified formulation and the numerical experiments reveal a striking observation: the coupling terms collected in the vector $D$ become asymptotically negligible in the regime $1-s=o(h)$, which motivates the reduced formulation studied below.
%  the full coupling via the vector $D$ may not be necessary to achieve accurate approximations.
% This insight motivates 
 Indeed, in the simplified version, the interface function $\varphi^s$ is only coupled with itself, i.e., we set all entries $D_i = 0$ and preserve only the scalar correction $c^* = a^s{\underline{\sigma}}(\varphi^s,\varphi^s)$.
 
 From a practical standpoint, this leads to a block-diagonal structure for the stiffness matrix $\mathbb{K}^*$, where the two local problems on $I_1$ and $I_2$ are completely independent, see Figure~\ref{fig:matrixKstar}. 
 \begin{figure}[H]
 	\centering
 	\includegraphics[scale=.9]{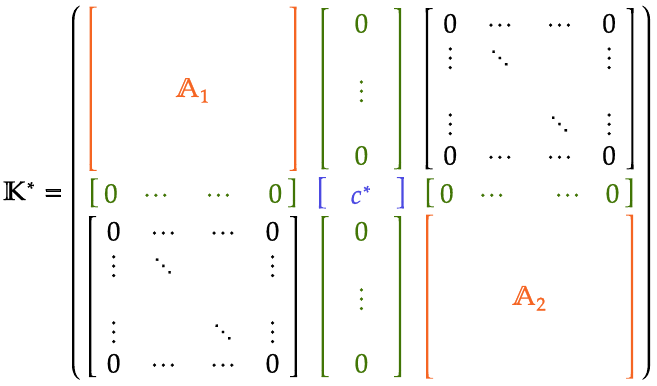}
 	\caption{Structure of the stiffness matrix $\mathbb{K}^*$}
 	\label{fig:matrixKstar}
 \end{figure}
 This greatly simplifies the assembly and resolution of the linear system while preserving the core mechanism of enrichment via $\varphi^s$.
 
 The simplified model therefore approximates the solution as
 
 \[
 u_h^s = u_{0,h}^s + u_h^s(b)\varphi^s,
 \]
 
 and $u_h^s(b)$ is now computed from the equation
 
 \[
 u_h^s(b) = \frac{\dis\int_I f(x) \varphi^s(x)\;\mathrm{d}x}{c^*}.
 \]
 
 Proposition \ref{PropLimitsDic} below shows that the neglected coupling coefficients satisfy
 $
\sup\limits_{i\in\llbracket 1,N_h\rrbracket\setminus\{M\}}|	D_i| \lesssim \dfrac{1-s}{h}.
 $
 Hence, under the asymptotic regime $1-s=o(h)$, the simplified model is asymptotically consistent with the reconstructed formulation. In the remainder of the paper, we study the convergence of this simplified model.

%\textcolor{blue}{\begin{rem}
%		We have conducted some numerical simulations to compare $U_h^s$ and $U$ (which is the vector representing the exact solution to \eqref{ClassicalProblem} at each node).  Surprisingly, using the block diagonal structure for $\mathbb{B}$, that is, in addition to the homogeneous subproblems solved by $u_1$ and $u_2$, $\varphi^s$ only interacts with itself (all $D_i$'s =0), seems to be enough to have convergence as $s\to 1^-$.
%\end{rem}}

\section{Convergence of the simplified nonlocal model to the classical one}\label{Section4}
In this section, we prove that the solution of the simplified model converges to the exact local solution as $s \to 1^-$ and $h \to 0^+$. More precisely, we consider
\begin{itemize}
	\item $u_h^s = u_{0,h}^s + u_h^s(b)\varphi^s$, the approximate solution of the simplified model introduced in Subsection \ref{Subsection3.3};
	\item $u = u_0 + u(b)\varphi$, the exact solution of the local problem.
\end{itemize}
Throughout this section, we assume that
$
s \in \left(\frac12,1\right),
$
so that pointwise evaluation at $x=b$ is well-defined for functions in
$\widetilde H^s(I)$. In particular, the quantities $u^s(b)$ and $u_h^s(b)$
are meaningful.   {Furthermore, we assume that
$\dis
	\frac{|\sigma_2|}{\sigma_1} \neq \frac{1-b}{b},
	$
	so that, by Theorem \ref{thm:local-Tcoercive}, the local problem \eqref{ClassicalProblem} is well posed in the Hadamard sense. Equivalently,
$\dis
	\frac{\sigma_1(1-b)+\sigma_2 b}{b(1-b)} \neq 0.
	$}

\smallbreak
We begin with a preliminary consistency result on the quantities defining the
simplified model. More precisely, we estimate the neglected coupling
coefficients $D_i$ and compare the scalar coefficient $c^*$ with its local
counterpart. 
%These asymptotic estimates justify the simplified formulation and
%will be used in the convergence analysis below.

\begin{prop}\label{PropLimitsDic}~\\
	Assume that $1-s$ is sufficiently small. Then, the following two properties hold :
	\\
{\em (i)}
	$
\sup\limits_{i\in\llbracket 1,N_h\rrbracket\setminus\{M\}}|	D_i| \lesssim \dfrac{1-s}{h}\;;
	$
	
	\noindent {\em (ii)} $|c^*-\widetilde{c}|\lesssim 1-s$, where $\widetilde{c}:=\frac{\sigma_1(1-b)+\sigma_2b}{b(1-b)}\neq0$.
\end{prop}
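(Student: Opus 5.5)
The plan is to compare each nonlocal quantity with its local counterpart in two steps. First, replace the lifting $\varphi^s$ by the classical profile $\varphi$. Second, compare the nonlocal form evaluated on $\varphi$ with the local form $a$ evaluated on $\varphi$. The local counterparts are explicit. For $i\neq M$ the support $[x_{i-1},x_{i+1}]$ of $\phi_i$ lies entirely in $\overline{I_1}$ or in $\overline{I_2}$, and $\varphi$ is affine there, so integrating by parts on each subinterval gives $a(\varphi,\phi_i)=0$. Moreover $a(\varphi,\varphi)=\sigma_1/b+\sigma_2/(1-b)=\widetilde c$, as in the proof of Theorem~\ref{thm:local-Tcoercive}. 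The two statements therefore reduce to
\[
|D_i-a(\varphi,\phi_i)|\lesssim \tfrac{1-s}{h},\qquad |c^*-a(\varphi,\varphi)|\lesssim 1-s .
\]

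For the first step I would use a uniform continuity bound for $a^s_{\underline\sigma}$ with $\sigma_3=0$. Since $C(s)\sim c_0(1-s)$ as $s\to1^-$, a Bourgain--Brezis--Mironescu type estimate should give
\[
|a^s_{\underline\sigma}(u,v)|\lesssim \|u\|_{H^1}\|v\|_{H^1}
\]
for piecewise $H^1$ functions vanishing outside $I$, with a constant independent of $s$ near $1$. Combined with $\|\varphi^s-\varphi\|_{H^1(I)}\lesssim 1-s$ (Proposition~\ref{cv-phi}), this yields:
\begin{itemize}
\item $|c^*-a^s_{\underline\sigma}(\varphi,\varphi)|\lesssim 1-s$;
\item $|D_i-a^s_{\underline\sigma}(\varphi,\phi_i)|\lesssim (1-s)\|\phi_i\|_{H^1}\lesssim (1-s)h^{-1/2}$, which is dominated by $(1-s)/h$.
\end{itemize}

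For the second step in (ii), I would use the closed forms in Remark~\ref{Remark-lift}. By direct computation, $a^s_{\underline\sigma}(\varphi,\varphi)=\frac{C(s)}{2}\bigl(\sigma_1\Phi_1(\varphi,s)-|\sigma_2|\Phi_2(\varphi,s)\bigr)$, with $\Phi_i\propto Q_i+R_i$. Each $Q_i$ and $R_i$ is an explicit function of $s$ with a simple pole at $s=1$. Multiplying by $C(s)$ removes the pole, and a first-order Taylor expansion at $s=1$ should show that $\frac{C(s)}{2}\Phi_1(\varphi,s)=\frac1b+O(1-s)$, and likewise $\frac1{1-b}+O(1-s)$ for $\Phi_2$. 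Together with the first step this gives (ii).

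For the second step in (i), I would use the explicit entries of $\mathbb B$ from Subsection~\ref{Subsection3.1}. Take an interior node with, say, $x_{i+1}<b$. On $I_1$, $\varphi=\sum_j\varphi(x_j)\phi_j$ exactly, so $a^s_{\underline\sigma}(\varphi,\phi_i)=\frac{C(s)}2\sum_j\varphi(x_j)\mathbb B_{ij}$ with $\sigma_3=0$. In the case $\sigma_1=\sigma_2=\sigma_3$ the corresponding sum reproduces a global affine-like consistency with $a(\varphi,\phi_i)=0$. Setting $\sigma_3=0$ leaves remainder terms involving $S_1$, $S_2$ and the tail sums in $L_1$, $L_2$ over the $I_2$ nodes. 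Each of these carries the factor $C(s)\,h^{1-2s}$, multiplied by a coefficient that vanishes at $s=1$ or by interaction kernels $r^{1-2s}$ with $r\ge h$. Expanding in $1-s$ should give a bound of order $(1-s)/h$ uniformly in $i$; the $1/h$ comes from the $h^{-2}$ prefactors of $S_1$ and $S_2$ at distance $r\sim h$ from $b$. Nodes with $x_{i-1}>b$ are handled symmetrically.

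The main obstacle is this uniform-in-$i$ expansion for the nodes adjacent to the interface, where the removed cross-interaction term is most singular. If the explicit bookkeeping becomes unwieldy, I would instead estimate $a^s_{\underline\sigma}(\varphi,\phi_i)$ directly. For $x\in\operatorname{supp}\phi_i\subset I_1$ write the principal-value integral of $(\varphi(x)-\varphi(y))/|x-y|^{1+2s}$. The $I_1$ part and the exterior part combine to a regional fractional Laplacian of an affine function. Its size is $\lesssim C(s)\,\mathrm{dist}(x,\partial I_1)^{-2s}$, which is $O(1-s)$ away from $\partial I_1$. The remaining boundary-layer contribution is then bounded using $\|\phi_i\|_{L^1}\sim h$ and $\|\phi_i\|_{L^\infty}=1$, which should again produce the bound $(1-s)/h$.
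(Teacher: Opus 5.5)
\paragraph{The gap: Step 2 of (i).} Your reduction to the consistency errors $a^s_{\underline\sigma}(\varphi,\phi_i)$ and $a^s_{\underline\sigma}(\varphi,\varphi)-\widetilde c$ is sound, and (ii) goes through. The decisive estimate in (i), however, is not established: you need $|a^s_{\underline\sigma}(\varphi,\phi_i)|\lesssim(1-s)/h$ uniformly in $i$.

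The matrix-entry route is only asserted. The diagonal and first off-diagonal entries of $\frac{C(s)}{2}\mathbb B$ are of size $h^{-1}$ with no factor $1-s$, through $C(s)H_3\approx 2h^{-1}$. Everything therefore rests on cancellations in $\sum_j\varphi(x_j)\mathbb B_{ij}$ that you do not exhibit.

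The fallback, as stated, fails exactly at the nodes whose support touches $\partial I_1$ or $\partial I_2$ (e.g.\ $x_{i+1}=b$ or $x_{i-1}=0$). With only $\|\phi_i\|_{L^\infty}=1$, the bound $C(s)\operatorname{dist}(x,\partial I_1)^{-2s}$ is not integrable over $\operatorname{supp}\phi_i$. Even using $\phi_i(x)\le\operatorname{dist}(x,\partial I_1)/h$, you only get
\[
C(s)\int_0^h\frac{t}{h}\,t^{-2s}\,\mathrm{d}t=\frac{C(s)}{2-2s}\,h^{1-2s},
\]
which has no factor $1-s$ left.

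The remedy is to compute the operator exactly. Since $\varphi=0$ on $\mathbb R\setminus I$ and $\varphi(y)=y/b$ on $I_1$, for $x\in I_1$
\[
C(s)\,\mathrm{P.V.}\!\int_{\mathbb R\setminus I_2}\frac{\varphi(x)-\varphi(y)}{|x-y|^{1+2s}}\,\mathrm{d}y
=\frac{C(s)}{b}\Bigl[\frac{x^{1-2s}}{2s(1-2s)}+\frac{(b-x)^{1-2s}}{2s-1}+\frac{x(1-x)^{-2s}}{2s}\Bigr].
\]
This is $O\bigl((1-s)\operatorname{dist}(x,\partial I_1)^{1-2s}\bigr)$, one power of the distance better than your bound. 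When $\operatorname{supp}\phi_i\subset\overline{I_1}$ and $\sigma_3=0$, $a^s_{\underline\sigma}(\varphi,\phi_i)$ is $\sigma_1$ times the integral of $\phi_i$ against this function. Using the linear vanishing of $\phi_i$ only for the extreme nodes, you get $|a^s_{\underline\sigma}(\varphi,\phi_i)|\lesssim(1-s)h^{2-2s}$ uniformly in $i$, and symmetrically on $I_2$. Combined with your Step 1, this gives $|D_i|\lesssim(1-s)h^{-1/2}$, which is sharper than the claim.

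\paragraph{Comparison with the paper.} With this repair your proof takes a genuinely different route. The paper keeps $\varphi^s$ throughout. It splits $D_i$ and $c^*$ into exterior and $I_k\times I_k$ contributions. It handles the interior part of $D_i$ by a formal first-order Taylor approximation, reducing it to $\frac{C(s)}{2(1-s)}\bigl|\int_{I_k}(\varphi^s)'\phi_i'\bigr|$. It evaluates the interior part of $c^*$ for $\varphi^s$ in closed form via Beta functions.

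Your Step 1 avoids that Taylor step. It rests on uniform-in-$s$ continuity of $a^s_{\underline\sigma}$, from the Bourgain--Brezis--Mironescu bound on each $I_k$ and Hardy's inequality for the $\omega$-term (which uses the zero traces at $0$ and $1$), together with Proposition~\ref{cv-phi}(ii). This step is rigorous and works for any lifting with $\|\psi^s-\varphi\|_{H^1(I)}\lesssim 1-s$. The paper's computation is specific to $\varphi^s$, but it gives an explicit expansion of $c^*$.

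Two small points in (ii):
\begin{itemize}
\item Remark~\ref{Remark-lift} only records the ratio, so you should verify $\Phi_i(\varphi,s)=Q_i+R_i$ directly. It holds: $Q_i$ is the $I_i\times I_i$ part and $R_i$ the $\omega$-part.
\item Only $Q_i$ has a pole at $s=1$; $R_i$ stays bounded. This is what gives $\frac{C(s)}{2}\Phi_1(\varphi,s)=\frac1b+O(1-s)$.
\end{itemize}
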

\begin{rem}
From the point~{\em (i)}, if $1-s = o(h)$, the contribution $D_i$ vanishes when $h \to 0^+$, which is consistent with the simplified new model.
\end{rem}

\begin{proof}[Proof of Proposition \ref{PropLimitsDic}]
	Assume that $1-s$ is sufficiently small.

\noindent(i) For any  $i\in\llbracket 1,N_h\rrbracket\setminus\{M\}$,

we have
$$
\begin{array}{lll}
 D_i&=&\dis \frac{C(s)}{2} \int_\R \int_\R \underline{\sigma}(x,y) \frac{(\varphi^s(x)-\varphi^s(y))(\phi_i(x)-\phi_i(y))}{|x-y|^{1+2s}} \, \mathrm{d}y \, \mathrm{d}x\\\\
 &=&\dis  C(s) \int_{I} \underbrace{\int_{\R\setminus I} \ldots \, \mathrm{d}y}_{J_{1,i}} \, \mathrm{d}x + \frac{C(s)}{2}\int_{I_1} \underbrace{\int_{I_1} \ldots \, \mathrm{d}y}_{J_{2,i}} \, \mathrm{d}x+\frac{C(s)}{2} \int_{I_2} \underbrace{\int_{I_2} \ldots \, \mathrm{d}y}_{J_{3,i}}\mathrm{d}x
% + \underbrace{C(s) \int_{I_1}\int_{I_2} \ldots \,\mathrm{d}y \, \mathrm{d}x}_{IT \textcolor{blue}{\textit{(Interaction term)}}}
 \\\\
  &=& \dis C(s)  \int_{I} J_{1,i}(x) \mathrm{d}x +\frac{C(s)}{2} \int_{I_1} J_{2,i}(x) \mathrm{d}x +\frac{C(s)}{2}  \int_{I_2} J_{3,i}(x) \mathrm{d}x.
\end{array}
$$
%\textcolor{blue}{In the new
%	model we set $\sigma_3=0$, hence $IT=0$; we keep $IT$ for the moment just to highlight
%	where cross-interactions enter ;  it does not affect the final estimate.
%}
--- \textbf{\em First,} $$
\begin{array}{lll}
	\dis C(s)  \int_{I} |J_{1,i}(x)| \mathrm{d}x&\leq&C(s)\max\{|\sigma_1|,|\sigma_2|\}\dis\int_I \varphi^s(x)\phi_i(x) \int_{\R\setminus I} \frac{dy}{|x-y|^{1+2s}}\mathrm{d}x
	\\\\ &=&\dfrac{C(s)}{2s}\max\{|\sigma_1|,|\sigma_2|\} \dis\int_I \left[ \frac{1}{x^{2s}}+ \frac{1}{(1-x)^{2s}}  \right]\varphi^s(x)\phi_i(x)\mathrm{d}x.
\end{array}
$$
\begin{itemize}
	\item[$\bullet$] Near 0, the dominant behavior comes from  $\dfrac{1}{x^{2s}}$, then
	$$\hspace*{-.5cm}
C(s)	 \dis\int_0^{2h} \left[ \frac{1}{x^{2s}}+ \frac{1}{(1-x)^{2s}}  \right]\varphi^s(x)\phi_i(x)\mathrm{d}x\sim
 \dfrac{C(s)}{hb^s} \dis\int_0^{2h} x^{1-s}\mathrm{d}x\lesssim (1-s)h^{1-s},
	$$	
	as $C(s)\sim 1-s$ as $s\to 1^-$, see \cite[Corollary 4.2]{DiNezzaPalatucciValdinoci}.
		\item[$\bullet$] Analogously, near 1 we get
	$$
	\dfrac{C(s)}{2s}\max\{|\sigma_1|,|\sigma_2|\} 	 \dis\int_{1-2h}^1 \left[ \frac{1}{x^{2s}}+ \frac{1}{(1-x)^{2s}}  \right]\varphi^s(x)\phi_i(x)\mathrm{d}x\lesssim (1-s)h^{1-s}.
	$$
		\item[$\bullet$] Inside $(2h,1-2h)$, we have $\dis \frac{1}{x^{2s}}+ \frac{1}{(1-x)^{2s}} \leq \frac{2}{h^{2s}}$. Therefore,
			$$
		\dfrac{C(s)}{2s}\max\{|\sigma_1|,|\sigma_2|\} 	 \dis\int_{(i-1)h}^{(i+1)h} \left[ \frac{1}{x^{2s}}+ \frac{1}{(1-x)^{2s}}  \right]\varphi^s(x)\phi_i(x)\mathrm{d}x\leq  \dfrac{2C(s)}{s}h^{1-2s}\lesssim (1-s)h^{1-2s}.
		$$
%		\textcolor{gray}{$h^{1-2s}=\dfrac{1}{h} e ^{2(1-s)\log(h)}\approx \dfrac{1}{h}[1+2(1-s)\log(h)]$.}
\end{itemize}
Hence, $\dis C(s)  \int_{I} |J_{1,i}(x)| \mathrm{d}x\lesssim (1-s)h^{1-2s}$.
\medbreak 
\noindent --- \textbf{\em Second,} let us treat $J_{2,i}$ and $J_{3,i}$. Near $x$, $\varphi^s(x)-\varphi^s(y)$ (resp. $\phi_i(x)-\phi_i(y)$) behaves at the first order like $(\varphi^s(x))' (x-y)$ (resp. $\phi'_i(x) (x-y)$). Then, we obtain

$$\dis\frac{C(s)}{2} \left|\int_{I_1} J_{2,i}(x) \mathrm{d}x\right|\approx \dis\frac{C(s)}{2} \left|\int_{I_1} \sigma_1 (\varphi^s(x))'\phi'_i(x)\frac{x^{2-2s}+(b-x)^{2-2s}}{2(1-s)}\mathrm{d}x\right|\leq  \dis\frac{C(s)}{2(1-s)} \left|\int_{I_1} \sigma_1 (\varphi^s(x))'\phi'_i(x)\mathrm{d}x\right|$$

and analogously,

$$\dis\frac{C(s)}{2} \left|\int_{I_2} J_{3,i}(x) \mathrm{d}x\right|\lesssim  \dis\frac{C(s)}{2(1-s)} \left|\int_{I_2} \sigma_2 (\varphi^s(x))'\phi'_i(x)\mathrm{d}x\right|.$$
%\approx  \dis\frac{C(s)}{2} \int_{I_2} |\sigma_2 (\varphi^s(x))'\phi'_i(x)|\frac{(x-b)^{2-2s}+(1-x)^{2-2s}}{2(1-s)}\mathrm{d}x
Thus,
$$\dis\frac{C(s)}{2}\left|\int_{I_1} J_{2,i}(x) \mathrm{d}x + \int_{I_2} J_{3,i}(x) \mathrm{d}x\right|\lesssim  \dis\frac{C(s)}{2(1-s)} \sum_{k=1,2}\left|\int_{I_k} (\varphi^s(x))'\phi'_i(x)\mathrm{d}x \right|
\lesssim {\dfrac{1-s}{h}}.$$

\bigbreak

\noindent(ii) Let us denote $\widetilde{c}=\dfrac{\sigma_1(1-b)+\sigma_2b}{b(1-b)}$. We have
$
|c^* -\widetilde{c}|\leq |\alpha_1(s)|+|\alpha_2(s) -\widetilde{c}|
$. In the subsequent, we precise the quantities $\alpha_k(s)$.
\\
	\noindent --- \textbf{\em First,} we treat the exterior contribution
\[
\alpha_1(s)
:= C(s)\int_I\int_{\mathbb R\setminus I}\underline\sigma(x,y)
\frac{|\varphi^s(x)-\varphi^s(y)|^2}{|x-y|^{1+2s}}\,\mathrm{d}x \,\mathrm{d}x.
\]
Since $\varphi^s\equiv0$ on $\mathbb R\setminus I$, we have $\varphi^s(y)=0$ for $y\notin I$ and thus
\[
|\alpha_1(s)|
\le C(s)\max\{|\sigma_1|,|\sigma_2|\}\int_I |\varphi^s(x)|^2
\left(\int_{\mathbb R\setminus I}\frac{dy}{|x-y|^{1+2s}}\right)\mathrm{d}x.
\]
Moreover, for $x\in I$,
\[
\int_{\mathbb R\setminus I}\frac{\mathrm{d}y}{|x-y|^{1+2s}}
=\frac1{2s}\left(\frac1{x^{2s}}+\frac1{(1-x)^{2s}}\right).
\]
Therefore
\[
|\alpha_1(s)|
\le \frac{C(s)}{2s}\max\{|\sigma_1|,|\sigma_2|\}
\int_0^1 |\varphi^s(x)|^2\left(\frac1{x^{2s}}+\frac1{(1-x)^{2s}}\right)\mathrm{d}x.
\]
The integrand is locally integrable for $s\in(\tfrac12,1)$, then the above integral is bounded uniformly for $s$ close to $1$ (it depends only on $b$).
Finally, using the asymptotic $C(s)\sim 2(1-s)$ as $s\to1^-$, we obtain
\[
|\alpha_1(s)|\lesssim 1-s.
\]

	\medskip
\noindent --- \textbf{\em Second,} we estimate the interior contribution
\[
\alpha_2(s)
:=\frac{C(s)}{2}\left[\sigma_1 \int_{I_1} \int_{I_1} 
\frac{|\varphi^s(x)-\varphi^s(y)|^2}{|x-y|^{1+2s}}\,\mathrm{d}y\,\mathrm{d}x
+\sigma_2 \int_{I_2} \int_{I_2} 
\frac{|\varphi^s(x)-\varphi^s(y)|^2}{|x-y|^{1+2s}}\,\mathrm{d}y\,\mathrm{d}x\right].
\]
A direct computation (based on the change of variables $x=bt$ on $I_1$ and
$x=b+(1-b)t$ on $I_2$, and the Beta--Gamma identity
$B(p,q)=\Gamma(p)\Gamma(q)/\Gamma(p+q)$) yields
\[
\alpha_2(s)
=\frac{C(s)s^2}{4(1-s)}\left[\frac{\sigma_1}{b^{2s-1}}+\frac{\sigma_2}{(1-b)^{2s-1}}\right]
\Bigl(1+\Gamma(2s-1)\Gamma(3-2s)\Bigr).
\]
	As $s\to1^-$ we have
	\[
	\Gamma(2s-1)\Gamma(3-2s)\to 1,
	\qquad
	\frac{C(s)s^2}{4(1-s)}\to \frac12,
	\]
	hence
	\[
	\frac{C(s)s^2}{4(1-s)}\Bigl(1+\Gamma(2s-1)\Gamma(3-2s)\Bigr)=1+o(1).
	\]
	Moreover,
	\[
	b^{1-2s}=\frac1b\,e^{2(1-s)\log b}
	=\frac1b\Bigl(1+2(1-s)\log b+o(1-s)\Bigr),
	\]
	and similarly
	\[
	(1-b)^{1-2s}
	=\frac1{1-b}\Bigl(1+2(1-s)\log(1-b)+o(1-s)\Bigr).
	\]
	Therefore
	\[
	\alpha_2(s)
	=\left(\frac{\sigma_1}{b}+\frac{\sigma_2}{1-b}\right)
	+2(1-s)\left(\frac{\sigma_1\log b}{b}+\frac{\sigma_2\log(1-b)}{1-b}\right)
	+o(1-s),
	\]
	and in particular
	\[
	|\alpha_2(s)-\widetilde c|\lesssim 1-s.
	\]
	Combining the bounds for $\alpha_1$ and $\alpha_2$ yields $|c^*-\widetilde c|\lesssim 1-s$.
\end{proof}

\medbreak
{The following proposition concerns the particular choice of $\varphi^s$.}

\begin{prop}\label{cv-phi}
	For $1-s$ sufficiently small, it holds that ;
	
		\noindent (i)
	$\|\varphi^s-\varphi\|_{\widetilde{H}^s(I)}\lesssim 1-s$;
	
	\noindent (ii)
	$\|\varphi^s-\varphi\|_{H^1(I)}\lesssim 1-s$. 
\end{prop}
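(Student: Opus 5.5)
The plan is to prove (ii) first by an explicit computation, and then deduce (i) from (ii) through an embedding $H^1_0(I)\hookrightarrow\widetilde H^s(I)$ whose constant is uniform in $s$. Set $w^s:=\varphi^s-\varphi$. Both functions vanish outside $I$, are continuous on $\overline I$, and equal $0$ at $x=0,1$ and $1$ at $x=b$. Hence $w^s$ vanishes at $0$, $b$ and $1$, and its extension by zero is continuous on $\mathbb R$. We will check below that $w^s\in H^1_0(I)$.

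\textbf{Step 1 ($H^1$-seminorm).} On $I_1$ we have $(w^s)'(x)=s\,x^{s-1}b^{-s}-b^{-1}$. With the change of variables $x=bt$,
\[
\int_{I_1}|(w^s)'|^2\,\mathrm dx=\frac1b\int_0^1\bigl(s\,t^{s-1}-1\bigr)^2\,\mathrm dt
=\frac1b\Bigl(\frac{s^2}{2s-1}-2+1\Bigr)=\frac{(1-s)^2}{b\,(2s-1)} .
\]
This is finite precisely because $s>\tfrac12$, which also shows $w^s\in H^1(I_1)$. The change of variables $x=1-(1-b)t$ on $I_2$ gives in the same way $\int_{I_2}|(w^s)'|^2=(1-s)^2/((1-b)(2s-1))$. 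Combined with the continuity of $w^s$ at $b$ and the zero boundary values, this gives $w^s\in H^1_0(I)$ and
\[
\|(w^s)'\|_{L^2(I)}\le \frac{1-s}{\sqrt{2s-1}}\Bigl(\frac1b+\frac1{1-b}\Bigr)^{1/2}\lesssim 1-s
\]
for $s$ close to $1$.

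\textbf{Step 2 (full $H^1$ norm).} The Poincaré inequality on $H^1_0(I)$ gives $\|w^s\|_{L^2(I)}\le \pi^{-1}\|(w^s)'\|_{L^2(I)}$. Therefore $\|w^s\|_{H^1(I)}\lesssim 1-s$, which is (ii).

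\textbf{Step 3 (from $H^1$ to $\widetilde H^s$).} The delicate point is that the constant in the embedding must not blow up as $s\to1^-$. A naive Gagliardo estimate would produce a factor $1/(1-s)$, which is exactly cancelled by the normalization $C(s)\sim 1-s$. To make this cancellation clean, I would use the Fourier characterization of the normalized seminorm (\cite[Proposition 3.4]{DiNezzaPalatucciValdinoci}): for $v\in H^s(\mathbb R)$,
\[
\frac{C(s)}{2}\int_{\mathbb R}\int_{\mathbb R}\frac{|v(x)-v(y)|^2}{|x-y|^{1+2s}}\,\mathrm dy\,\mathrm dx=\int_{\mathbb R}|\xi|^{2s}|\widehat v(\xi)|^2\,\mathrm d\xi,
\]
up to the normalization convention of the Fourier transform. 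Since $|\xi|^{2s}\le 1+|\xi|^2$ for every $s\in(0,1)$, this yields $\|\widetilde w\|_{\widetilde H^s(I)}\le \|\widetilde w\|_{H^1(\mathbb R)}=\|w\|_{H^1(I)}$ for all $w\in H^1_0(I)$, with a constant independent of $s$ (at worst a fixed constant depending on the Fourier convention). Applying this to $w=w^s$ and using Step 2 gives (i).

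If one prefers to avoid Fourier analysis, an alternative is to split the Gagliardo double integral into $|x-y|<1$, where one uses $|w(x)-w(y)|^2\le |x-y|\int|w'|^2$ along segments, and $|x-y|\ge1$, where one uses the $L^2$ bound. This gives a bound $\bigl(\tfrac{1}{2(1-s)}+\tfrac{2}{s}\bigr)\|w\|_{H^1}^2$, and multiplying by $C(s)/2\sim 1-s$ again yields a uniform constant. Either way, the only real obstacle is this $s$-uniformity of the embedding constant; the rest reduces to the exact integrals computed in Step 1.
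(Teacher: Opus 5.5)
Your proof is correct, and it takes a somewhat different, more rigorous route than the paper's.

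\textbf{Part (ii).} The paper does not compute anything exactly. It linearizes in $1-s$, writing $z^s\approx z+(s-1)z\log z$ and $s z^{s-1}\approx 1-(1-s)(1+\log z)$ with $z=x/b$. It then bounds the $L^2$ part through $\max_z z|\log z|=e^{-1}$, and the derivative through $\int_0^1|1+\log z|^2\,\mathrm dz<\infty$. That linearization is not uniform near $z=0$, where $\log z$ is unbounded. Made rigorous, it needs a remainder bound such as $|t^{s-1}-1|\le (1-s)|\log t|\,t^{s-1}$. Your closed form $\|(\varphi^s-\varphi)'\|_{L^2(I)}^2=\frac{(1-s)^2}{2s-1}\bigl(\frac1b+\frac1{1-b}\bigr)$, combined with the Poincar\'e inequality on $H^1_0(I)$, avoids this issue entirely. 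It also gives an explicit constant and holds for every $s\in(\frac12,1)$, not only for $1-s$ small.

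\textbf{Part (i).} The paper invokes the inequality from \cite[Proposition 1.25]{Leoni},
$\|w\|_{\widetilde H^s(I)}^2\lesssim \frac{C(s)}{\ell^{2s}}\|w\|_{L^2}^2+\frac{C(s)}{(1-s)\ell^{2-2s}}\|w'\|_{L^2}^2$, together with $C(s)\sim 2(1-s)$. This is essentially your fallback splitting argument with $\ell=1$. The constants in your fallback are off by a harmless factor $2$: the unnormalized double integral is bounded by $\frac{1}{1-s}\|w'\|^2_{L^2}+\frac4s\|w\|^2_{L^2}$, which does not affect the conclusion.

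Your main Fourier argument is cleaner. The paper's $C(s)$ coincides with the normalization constant in \cite[Proposition 3.4]{DiNezzaPalatucciValdinoci}, so you get $\|w\|_{\widetilde H^s(I)}\le\|w\|_{H^1(I)}$ for all $w\in H^1_0(I)$ and all $s\in(0,1)$. This requires no asymptotic bookkeeping on $C(s)$ and no free parameter $\ell$. For this specific lifting $\varphi^s$, your version is both shorter and fully justified, whereas the paper's argument relies on heuristic first-order expansions.
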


\begin{proof}  For $x\in(0,b)$, let $z = x/b \in (0,1)$. Expanding $\varphi^s(x) = z^s/b^s$ near $s=1$, we get
 	\[
 	|\varphi^s(x)-\varphi(x)| = \left| z^s - z \right| \approx (1-s) \frac{x}{b} \left| \log \left( \frac{x}{b} \right) \right|,
 	\]
 	since $z^s \approx z + (s-1) z \log z$. The function $z |\log z|$ attains its maximum at $z=e^{-1}$, so
 	\[
 	\|\varphi^s-\varphi\|_{L^\infty(0,b)} \lesssim e^{-1} (1-s).
 	\]
 	Thus, $\|\varphi^s-\varphi\|_{L^2(0,b)} \lesssim (1-s)$. In addition, we have 
 	$\dis
 	(\varphi^s)'(x) = \frac{s}{b} z^{s-1}$ and $\dis(\varphi)'(x) = \frac{1}{b}.
 	$
 	Expanding $s z^{s-1} \approx [1-(1-s)](1-(1-s)\log z) \approx 1 - (1-s) (1 + \log z)$, we get
 	\[
 	|(\varphi^s)'(x) - (\varphi)'(x)| = \frac{1}{b} |s z^{s-1} - 1| \approx \frac{1-s}{b} |1 + \log z|.
 	\]
 {Since $\dis\int_0^b |1 + \log(x/b)|^2 dx = b \int_0^1 |1 + \log z|^2 dz < +\infty$}, we have
 	$
 	\|(\varphi^s)' - (\varphi)'\|_{L^2(0,b)}^2 \approx  C (1-s)^2,
 	$
 	so $\|(\varphi^s)' - (\varphi)'\|_{L^2(0,b)} \lesssim 1-s$. Identical analysis for $x \in (b,1)$ yields similar bounds. Thus, we obtain
 	\[
 	\|\varphi^s-\varphi\|_{H^1(I)}^2 = \|\varphi^s-\varphi\|_{L^2(I)}^2 + \|(\varphi^s)'-\varphi)'\|_{L^2(I)}^2 \lesssim (1-s)^2,
 	\]
 	implying (ii).
\\ 	
 For (i), we use the embedding $H^1(\mathbb{R}) \subseteq \widetilde{H}^s(I)$. More precisely, for any measurable function $w:\R\to \R$ belonging to $H^1(\R)$, 
 \[
 \|w\|_{\widetilde{H}^s(I)}^2 \lesssim \frac{C(s)}{ \ell^{2s}} \|w\|_{L^2(\mathbb{R})}^2 + \frac{C(s)}{ (1-s) \ell^{2-2s}} \|w'\|_{L^2(\mathbb{R})}^2, \;  0 < \ell < +\infty,
 \]
 see \cite[Proposition 1.25]{Leoni}.
 Let us set $w = \varphi^s - \varphi$, extended by zero outside $I$. Using $\|\varphi^s-\varphi\|_{L^2(\mathbb{R})} = \|\varphi^s-\varphi\|_{L^2(I)} \lesssim 1-s$ and $\|w'\|_{L^2(\mathbb{R})} \lesssim 1-s$. Therefore, we get
 \[
 \|w\|_{\widetilde{H}^s(I)}^2 \lesssim \frac{1-s}{\ell^{2s}} (1-s)^2 + \frac{1-s}{(1-s) \ell^{2-2s}} (1-s)^2 \approx (1-s)^3 + (1-s)^2 \approx (1-s)^2,
 \]
 since $\ell^{2s} \to \ell^2$ and $\ell^{2-2s} \to 1$ as $s \to 1^-$. Thus, (i) holds.
 \end{proof}

In the following proposition, we assume that \( D_i = 0 \) for all \( i \in \llbracket 1, N_h \rrbracket \setminus \{M\} \), under the natural condition that \( 1 - s = o(h) \).
\begin{prop}\label{cv-ub-gen}
	Let $s\in(\frac12,1)$. Assume that $f\in H^{-s}(I)$.  Let $u$ the exact solution to \eqref{ClassicalProblem} and $u_h^s$ the approximated solution given by \eqref{uhs}.
	Then, for $1-s$ sufficiently small, it holds that $$|u_h^s(b)-u(b)|\lesssim (1-s)\|f\|_{H^{-s}(I)}.$$ %In particular $|u_h^s-u(b)|\to 0 \;\;\text{as}\;\; s\to 1^-$.
\end{prop}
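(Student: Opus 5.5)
We compare the two explicit scalar formulas for the interface values. In the simplified model,
\[
u_h^s(b)=\frac{\int_I f\varphi^s}{c^*},
\]
and we first derive the analogous identity for $u(b)$. Testing the local variational formulation \eqref{ClassicalVar} with $v=\varphi$ and using the decomposition $u=u_0+u(b)\varphi$, the $\sigma$-orthogonality of $u_0$ and $\varphi$ established in the proof of Theorem~\ref{thm:local-Tcoercive} gives $a(u_0,\varphi)=0$. Hence
\[
u(b)\,a(\varphi,\varphi)=\int_I f\varphi,\qquad a(\varphi,\varphi)=\frac{\sigma_1}{b}+\frac{\sigma_2}{1-b}=\widetilde c\neq0,
\]
and therefore $u(b)=\widetilde c^{-1}\int_I f\varphi$, where the pairing is understood in the $H^{-s}$--$\widetilde H^s$ duality since $\varphi\in\widetilde H^s(I)$.

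Next we split the difference as
\[
u_h^s(b)-u(b)=\frac{\langle f,\varphi^s-\varphi\rangle}{c^*}+\langle f,\varphi\rangle\Bigl(\frac1{c^*}-\frac1{\widetilde c}\Bigr)
\]
and bound the two terms separately.

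For the first term, duality gives $|\langle f,\varphi^s-\varphi\rangle|\le\|f\|_{H^{-s}(I)}\|\varphi^s-\varphi\|_{\widetilde H^s(I)}\lesssim(1-s)\|f\|_{H^{-s}(I)}$ by Proposition~\ref{cv-phi}(i).

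For the second term, Proposition~\ref{PropLimitsDic}(ii) gives $|c^*-\widetilde c|\lesssim1-s$. For $1-s$ small this yields $|c^*|\ge|\widetilde c|/2>0$, so $1/c^*$ is bounded uniformly in $s$, and
\[
\Bigl|\frac1{c^*}-\frac1{\widetilde c}\Bigr|=\frac{|c^*-\widetilde c|}{|c^*||\widetilde c|}\lesssim1-s.
\]
It remains to bound $|\langle f,\varphi\rangle|\le\|f\|_{H^{-s}(I)}\|\varphi\|_{\widetilde H^s(I)}$ uniformly in $s$ close to $1$. Using the interpolation estimate from \cite[Proposition 1.25]{Leoni} already quoted in the proof of Proposition~\ref{cv-phi}, together with $C(s)\sim 1-s$, we get $\|\varphi\|_{\widetilde H^s(I)}^2\lesssim\|\varphi\|_{H^1(\R)}^2$, which is independent of $s$. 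Combining the two bounds gives the claim.

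The main obstacle is the uniformity of the constants as $s\to1^-$. This concerns the constants in Proposition~\ref{cv-phi}(i), the $s$-independent bound on $\|\varphi\|_{\widetilde H^s(I)}$, and the lower bound on $|c^*|$. A second point concerns the role of the mesh: since $D_i=0$ is assumed, the scalar $u_h^s(b)$ does not depend on $h$ at all. This matters because $c^*$ is the exact quantity, not $c_h$. Should the interpolated $c_h$ be used instead, one would have to add the estimate $\|\varphi_h^s-\varphi^s\|_{\widetilde H^s}\lesssim h^{1-s}(1-s)$ from the remark in Section~\ref{Section3} to control $|c_h-c^*|$.
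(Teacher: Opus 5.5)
Your proposal is correct and follows essentially the paper's argument: the same two ratio formulas $u_h^s(b)=\langle f,\varphi^s\rangle/c^*$ and $u(b)=\langle f,\varphi\rangle/\widetilde c$, the same splitting of the difference, and the same inputs, namely Proposition~\ref{cv-phi}(i) and Proposition~\ref{PropLimitsDic}(ii). The differences are small improvements in rigour. You obtain $\widetilde c\,u(b)=\langle f,\varphi\rangle$ by testing the variational formulation with $\varphi$ and using $a(u_0,\varphi)=0$, whereas the paper integrates the ODE explicitly with the flux values $u'(0),u'(1)$ and Fubini; your identity therefore holds for any $f\in H^{-s}(I)\subset H^{-1}(I)$, not only for integrable $f$. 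You also keep the duality pairing without absolute values inside the integral, and you justify the $s$-uniform bound on $\|\varphi\|_{\widetilde H^s(I)}$; the paper treats both of these points only implicitly.
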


\begin{proof}

\noindent
--- \textbf{\em First,}  we have $$-\dis\int_0^1(\sigma(x) u'(x))'\mathrm{d}x=\int_0^1f(x)\mathrm{d}x=\sigma_1u'(0)-\sigma_2u'(1).$$ Furthermore, \begin{itemize}
	\item in $I_1$, $-(\sigma_1 u')'=f$ implies $u(b)=\dis\int_0^b u'(t)\mathrm{d}t=bu'(0)-\sigma_1^{-1} \dis\int_0^b \left[ \dis\int_0^t f(x) \mathrm{d}x\right] \mathrm{d}t $ ;
	\item in $I_2$, $-(\sigma_2 u')'=f$ implies $u(b)=-\dis\int_b^1 u'(t)\mathrm{d}t=-(1-b)u'(1)-\sigma_2^{-1} \dis\int_b^1 \left[ \dis\int_t^1 f(x) \mathrm{d}x\right] \mathrm{d}t $.
\end{itemize} 
By exchanging the order of integration (Fubini), we obtain $\dis\int_0^b \left[ \dis\int_0^t f(x) \mathrm{d}x\right] \mathrm{d}t=\int_0^b f(x) (b-x)\mathrm{d}x$ and $\dis\int_b^1 \left[ \dis\int_t^1 f(x) \mathrm{d}x\right] \mathrm{d}t=\int_b^1 f(x) (x-b)\mathrm{d}x$.

Then,
\begin{equation}
	\begin{array}{rlll}
	\underbrace{\left[ \dfrac{\sigma_1}{b}+\dfrac{\sigma_2}{1-b}\right]}_{\widetilde{c}} u(b)&=\dis \sigma_1u'(0)-\sigma_2u'(1)-\dfrac{1}{b}  \int_0^b f(x) (b-x)\mathrm{d}x -\dfrac{1}{1-b}\int_b^1 f(x) (x-b)\mathrm{d}x\\\\
		&=\dis \int_0^1f(x)\mathrm{d}x-\dfrac{1}{b}  \int_0^b f(x) (b-x)\mathrm{d}x -\dfrac{1}{1-b}\int_b^1 f(x) (x-b)\mathrm{d}x
		\\\\
	&= \dis \int_0^b f(x) \frac{x}{b}\mathrm{d}x +\int_b^1 f(x) \dfrac{1-x}{1-b}\mathrm{d}x
		\\\\
	&=\dis \int_0^1 f(x) \varphi(x)  \mathrm{d}x.
	\end{array}
\end{equation}
--- \textbf{\em Second,} we have 
\begin{equation}\label{c*uhsb-}
	c^*u_h^s(b)=\dis\int_0^1 f(x)\varphi^s(x)\mathrm{d}x.%=\dfrac{b^{\alpha+1}}{\alpha+s+1}+\dfrac{B(1-b;s+1,\alpha+1)}{(1-b)^s}.
\end{equation}

Therefore, we get 
\begin{equation}
	\begin{array}{rlll}
		|u_h^s(b)-u(b)|&=\left|\dfrac{1}{c^*} \dis\int_0^1 f(x)\varphi^s(x)\mathrm{d}x -\dfrac{1}{\widetilde{c}}\dis \int_0^1 f(x) \varphi(x)  \mathrm{d}x\right|
		\\\\
		& \leq \dfrac{1}{|c^*|}\dis\int_0^1 |f(x)(\varphi^s(x)-\varphi(x))|\mathrm{d}x +\left|\dfrac{1}{c^*}-\dfrac{1}{\widetilde{c}}\right|\dis\int_0^1 |f(x)\varphi(x)|\mathrm{d}x
		\\\\
			& \leq \dfrac{1}{|\widetilde{c}|}\dis\int_0^1 |f(x)(\varphi^s(x)-\varphi(x))|\mathrm{d}x +\left|\dfrac{1}{c^*}-\dfrac{1}{\widetilde{c}}\right|
		\left[	\dis\int_0^1 |f(x)(\varphi^s(x)-\varphi(x))|\mathrm{d}x+
		\dis\int_0^1 |f(x)\varphi(x)|\mathrm{d}x\right]
			\\\\
		& \lesssim \|f\|_{H^{-s}(I)} \|\varphi^s-\varphi\|_{\widetilde{H}^{s}(I)}+ (1-s) \|f\|_{H^{-s}(I)} \left[\|\varphi^s-\varphi\|_{\widetilde{H}^{s}(I)}+\|\varphi\|_{{H}^{1}(I)}\right],
	\end{array}
\end{equation}	
as 	$\left|\dfrac{1}{c^*}-\dfrac{1}{\widetilde{c}}\right|\lesssim 1-s$. Using Proposition \ref{cv-phi}, we conclude the proof.
\end{proof}

\bigbreak

Let us recall that $u_0^s=u_1$ in $I_1$ solution to  subproblem-1 \eqref{SubFractionalProblem1}, and $u_0^s=u_2$ in $I_2$ solution to subproblem-2 \eqref{SubFractionalProblem2}.
\begin{prop} \label{cv-us}
 Assume that $s\in (0,1)$ and {$f\in L^\infty(I)$}. Let $u_0^s\in \widetilde{H}^s(I_i)$ and $u_0\in H^1(I_i)$ be the weak solutions to nonlocal and local subproblems-$i$ respectively for $i=1,2$. Then, there exists a constant $\Lambda=\Lambda(I,u_0)\geq 0$ such that  %For $1-s$ be sufficiently small, it holds that
\begin{equation}
	\|u_0^s-u_0\|_{\widetilde{H}^s(I_i)}\leq \Lambda \sqrt{1-s}\|f\|_{L^\infty\left(I\right)}.
\end{equation} 

\end{prop}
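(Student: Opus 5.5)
We work on one subinterval $I_i$ at a time and drop the factor $\sigma_i$, which only rescales $f$ (for $I_2$ we pass to $|\sigma_2|$ and $-f$). So $u_0^s\in\widetilde H^s(I_i)$ satisfies $a^s(u_0^s,v)=\int_{I_i} f v$ for all $v\in\widetilde H^s(I_i)$, and $u_0\in H^1_0(I_i)$ satisfies $\int_{I_i}u_0'v'=\int_{I_i}fv$ for all $v\in H^1_0(I_i)$. The natural route is a Galerkin-type (Céa/Strang) argument in the energy norm $\|\cdot\|_{\widetilde H^s(I_i)}$. This norm is exactly $a^s(\cdot,\cdot)^{1/2}$, so the problem is coercive with constant $1$. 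Since $H^1_0(I_i)\subset\widetilde H^s(I_i)$, we may test the nonlocal equation with $v=u_0^s-u_0$ and obtain
\[
\|u_0^s-u_0\|_{\widetilde H^s(I_i)}^2=a^s(u_0^s-u_0,u_0^s-u_0)=\int_{I_i}f\,(u_0^s-u_0)-a^s(u_0,u_0^s-u_0).
\]
Everything therefore reduces to the consistency error $\mathcal E_s(v):=\int_{I_i}fv-a^s(u_0,v)$, which we will estimate for $v\in\widetilde H^s(I_i)$.

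\textbf{Regularity of the local solution.} In 1D the local solution is explicit: $u_0(x)=\int_{I_i}G(x,y)f(y)\,\mathrm dy$ with the Green function of $-\partial_{xx}$ on $I_i$. Hence $f\in L^\infty$ gives $u_0\in W^{2,\infty}(I_i)$ with $\|u_0\|_{W^{2,\infty}}\lesssim\|f\|_{L^\infty}$. Extended by zero, $\widetilde u_0$ is Lipschitz on $\mathbb R$, has a derivative jump only at $\partial I_i$, and satisfies $-u_0''=f$ in $I_i$.

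\textbf{Consistency estimate (main step).} Write $a^s(u_0,v)=\langle(-\Delta)^s\widetilde u_0,v\rangle$, so that $\mathcal E_s(v)=\langle(-\Delta)\widetilde u_0-(-\Delta)^s\widetilde u_0,v\rangle$ in the $I_i$ sense. In Fourier variables,
\[
a^s(u_0,v)=\int_{\mathbb R}|\xi|^{2s}\widehat{\widetilde u_0}\,\overline{\widehat{\widetilde v}}\,\mathrm d\xi,\qquad \int_{I_i} u_0'v'=\int_{\mathbb R}|\xi|^{2}\widehat{\widetilde u_0}\,\overline{\widehat{\widetilde v}}\,\mathrm d\xi .
\]
This is only formal, since $v$ is merely $\widetilde H^s$. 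To make it rigorous we use the multiplier $|\xi|^{2s}-|\xi|^2$ and the splitting $|\xi|^{2}-|\xi|^{2s}=|\xi|^{s}\,|\xi|^{s}(|\xi|^{2-2s}-1)$. Cauchy--Schwarz then gives
\[
|\mathcal E_s(v)|\le \|v\|_{\widetilde H^s}\Big(\int |\xi|^{2s}\big(|\xi|^{2-2s}-1\big)^2|\widehat{\widetilde u_0}|^2\Big)^{1/2}.
\]
The bracket must be shown to be $O(\sqrt{1-s})\|f\|_{L^\infty}$. For $|\xi|\le R$ we use $\big||\xi|^{2-2s}-1\big|\lesssim(1-s)|\log|\xi||$, which contributes $O(1-s)$. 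For large $|\xi|$ we only know $\widehat{\widetilde u_0}(\xi)=O(|\xi|^{-2})$, because $\widetilde u_0\in H^{3/2-\delta}(\mathbb R)$ only, due to the kinks at $\partial I_i$. The tail integrand then behaves like $|\xi|^{2s}\,|\xi|^{4-4s}\,|\xi|^{-4}=|\xi|^{2-2s-2}\cdot|\xi|^{\dots}$, and the high-frequency region yields roughly $\int_1^\infty \min\{(1-s)^2\log^2\xi,1\}\,\xi^{-2+\dots}\,\mathrm d\xi$. Choosing a threshold $R\sim e^{1/(1-s)}$ and balancing the two regions should give a bound $\lesssim(1-s)\cdot\text{const}$ for the squared quantity, hence $\sqrt{1-s}$ for the norm. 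This balancing, and in particular the boundary kinks which limit the decay of $\widehat{\widetilde u_0}$, is exactly where the $\sqrt{1-s}$ rate (rather than $1-s$) is expected to come from, and it is the main obstacle. The constant $C(s)$ normalization guarantees that the Fourier symbol is exactly $|\xi|^{2s}$.

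\textbf{Conclusion.} Inserting $v=u_0^s-u_0$ gives $\|u_0^s-u_0\|^2_{\widetilde H^s(I_i)}\le \Lambda\sqrt{1-s}\,\|f\|_{L^\infty}\,\|u_0^s-u_0\|_{\widetilde H^s(I_i)}$, and dividing by the norm yields the claim with $\Lambda$ depending on $I$ and on $u_0$ through its $W^{2,\infty}$ bound. An alternative to the Fourier argument, if its details prove cumbersome, is to work directly in physical space. One Taylor-expands $\widetilde u_0$ to second order in the singular integral away from $\partial I_i$. The boundary layer of width $\ell$ is then controlled using the Lipschitz bound and $C(s)\sim 1-s$, via a Hardy-type estimate as in Proposition~\ref{NormWithoutIntermediate}, and $\ell$ is optimized at the end.
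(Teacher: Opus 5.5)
Your route is genuinely different from the paper's. The paper never estimates a consistency error. It rescales the kernel constant from $C(s)$ to $\widehat C(s)=2(1-s)$, applies \cite[Theorem 1]{BungdelTeso} as a black box (squared energy error $\lesssim 1-s$ plus an $L^1$ data perturbation), and bounds the perturbation by $|1-C(s)/\widehat C(s)|\lesssim 1-s$; the $\sqrt{1-s}$ is simply inherited from that theorem.

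Your Strang-type identity $\|u_0^s-u_0\|^2_{\widetilde H^s(I_i)}=\mathcal E_s(u_0^s-u_0)$ and the Fourier Cauchy--Schwarz bound are correct. However, the step that carries all the content is left as ``should give'', with an unspecified tail exponent. That step is the bound on $Q(s):=\int_{\mathbb R}|\xi|^{2s}\bigl(|\xi|^{2-2s}-1\bigr)^2|\widehat{\widetilde u_0}(\xi)|^2\,\mathrm d\xi$, so as written the proposition is not proved. Your tail heuristic also fails: replacing $(|\xi|^{2-2s}-1)^2$ by $|\xi|^{4-4s}$ leaves $\int^\infty|\xi|^{-2s}\,\mathrm d\xi$, which carries no factor $1-s$. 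All the smallness comes from the cancellation in $|\xi|^{2-2s}-1$, so that cancellation must be kept.

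The gap closes by direct computation. Write $I_i=(a,c)$. Then $-\widetilde u_0''=f\chi_{I_i}-u_0'(a^+)\delta_a+u_0'(c^-)\delta_c$ is a finite measure of mass $\lesssim\|f\|_{L^\infty(I)}$. Hence $|\widehat{\widetilde u_0}(\xi)|\le M\min\{1,|\xi|^{-2}\}$ with $M\lesssim\|f\|_{L^\infty(I)}$, and with $\epsilon=1-s$,
\[
\begin{aligned}
Q(s)&\le 2M^2\Bigl(\int_0^1\xi^{2-2\epsilon}\bigl(1-\xi^{2\epsilon}\bigr)^2\,\mathrm d\xi+\int_1^\infty\xi^{-2-2\epsilon}\bigl(\xi^{2\epsilon}-1\bigr)^2\,\mathrm d\xi\Bigr)\\
&=2M^2\Bigl(\frac{8\epsilon^2}{3(9-4\epsilon^2)}+\frac{8\epsilon^2}{1-4\epsilon^2}\Bigr).
\end{aligned}
\]
So for $s\ge\frac34$ your argument gives $\|u_0^s-u_0\|_{\widetilde H^s(I_i)}\lesssim(1-s)\|f\|_{L^\infty(I)}$. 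This implies the statement and is sharper than the paper's rate, consistent with the $\mathcal O(1-s)$ slopes of Section~\ref{Section5}. Your belief that the boundary kinks force the rate $\sqrt{1-s}$ is therefore mistaken.

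The kinks do matter, but for the admissible range of $s$ rather than the rate. Splitting the multiplier does not justify $\int_{I_i}fv=\int_{\mathbb R}|\xi|^2\widehat{\widetilde u_0}\,\overline{\widehat{\widetilde v}}\,\mathrm d\xi$ for $v\in\widetilde H^s(I_i)$. Two facts do:
\begin{itemize}
\item pairing the measure above with $\widetilde v$, which kills the Dirac terms because $\widetilde v(a)=\widetilde v(c)=0$;
\item absolute convergence of the integral, which uses $\widetilde u_0\in H^{2-s}(\mathbb R)$.
\end{itemize}
Both require $s>\frac12$, and the constant above blows up as $s\to\frac12^+$.

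For $s\le s_0$, with a fixed $s_0\in(\frac12,1)$, conclude instead from the uniform stability bounds together with $\sqrt{1-s}\ge\sqrt{1-s_0}$. These bounds are $\|u_0^s\|_{\widetilde H^s(I_i)}+\|u_0\|_{\widetilde H^s(I_i)}\lesssim\|f\|_{L^\infty(I)}$. They follow from a uniform fractional Poincar\'e inequality on $I_i$ and from $\|u_0\|_{\widetilde H^s}\le\|\widetilde u_0\|_{H^1(\mathbb R)}$.

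With these two additions your proof is complete, elementary and self-contained, and it yields the better rate. The paper's citation is shorter and does not depend on the explicit one-dimensional Fourier decay of $\widetilde u_0$, but it only gives $\sqrt{1-s}$.
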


\begin{proof} Let us denote $\widehat{C}(s):=4\dfrac{\Gamma(\frac32)}{\sqrt{\pi}}(1-s)$ and   $\|\cdot \|^2_{s}:=\dfrac{\widehat{C}(s)}{{C}(s)}\|\cdot\|^2_{\widetilde{H}^s(I_i)}$
Further, let us denote  $\widehat{f}:=\dfrac{\widehat{C}(s)}{{C}(s)}f$ and $\widehat{(-\Delta)^s}:=\dfrac{\widehat{C}(s)}{{C}(s)}(-\Delta)^s$.
\begin{figure}[H]
	\centering
	\includegraphics[scale=0.25]{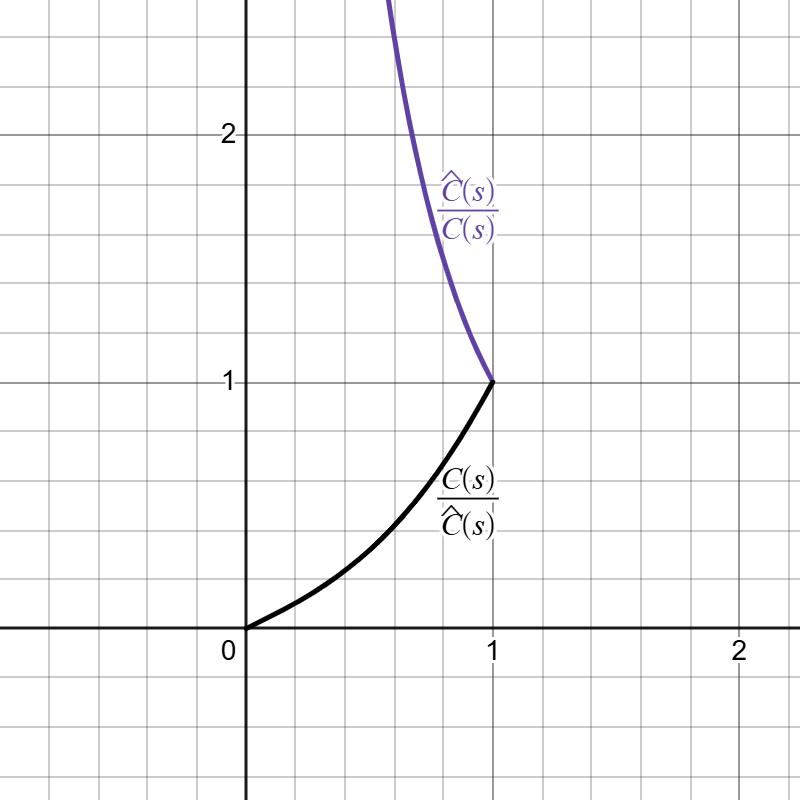}
		\caption{Behavior of $\dfrac{\widehat{C}(s)}{C(s)}$ and its reciprocal}
	\label{fig:Cnts}
\end{figure}
Then, solving $(-\Delta)^su_i=f$ in $I_i$ is equivalent to solve $\widehat{(-\Delta)^s} u_i=\widehat{f}$ in $I_i$. Thanks to \cite[Theorem 1]{BungdelTeso}, we have
\begin{equation}
\begin{array}{rclll}
	\|u_0^s-u\|^2_{\widetilde{H}^s(I_i)}&=& \dfrac{{C}(s)}{\widehat{C}(s)}\|u_0^s-u\|^2_s
	\\
	&\leq& \|u_0^s-u\|^2_s
	\\
	&\leq& K\Bigg[1-s+\Bigg\|\widehat{f}-\dfrac{C(s)}{\widehat{C}(s)}\widehat{f}\Bigg\|_{L^1(I_i)}\Bigg]
	\\
	&\leq& K\Bigg[1-s+\Bigg|1-\dfrac{C(s)}{\widehat{C}(s)}\Bigg|\Big\|\widehat{f}\Big\|_{L^1(I_i)}\Bigg]
		\\
	&\leq&  K\max\{1,\rho\big\|\widehat{f}\|_{L^1(I_i)}\big\} (1-s),
\end{array}
\end{equation}
where $K=K(I, u_0, f)\geq 0$ 
and $\rho>0$ (independent of $s$) arises from the inequality $\Bigg|1-\dfrac{C(s)}{\widehat{C}(s)}\Bigg|\leq \rho(1-s)$, see \cite[Appendix A]{BungdelTeso}.\end{proof}

\begin{prop}\label{cv-u0hs}
Let $s\in(\frac12,1)$. Assume that $f\in C^\beta(\overline{I})$ for some $\beta>0$.
	Then for $h$ sufficiently small, it holds that
\begin{itemize}
	\item[\em (i)]	\begin{equation}\label{ush-us-Hs}
		\left\|u^s_{0,h}-u^s_0\right\|_{\widetilde{H}^s(I_i)} \lesssim h^{1/2}|\log(h)|\,\|f\|_{C^{\beta}(\overline{I})}.
\end{equation}

	\item[\em (ii)]  If, moreover $u_0^s=u^s-u^s(b)\varphi^s\in H^1(I)$, then	\begin{equation}\label{ush-us-H1}
	\left\|u^s_{0,h}-u^s_0\right\|_{H^1(I_i)} \lesssim h^{s-(1/2)}|\log(h)|\,\|f\|_{C^{\beta}(\overline{I})}.
	\end{equation}
\end{itemize}
\end{prop}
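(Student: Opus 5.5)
Since $u^s_{0,h}$ restricted to $I_i$ is exactly the conforming $\mathbb P_1$ Galerkin approximation of the subproblem \eqref{FractionalVFi} (the simplified model decouples into the two blocks $\mathbb A_1,\mathbb A_2$), the estimate is a standard a priori error bound for the integral fractional Laplacian on an interval, applied separately on $I_1$ and $I_2$. The plan is therefore: (1) invoke C\'ea's lemma for each subproblem; (2) bound the best approximation error through the Sobolev regularity of $u_0^s$ near the endpoints of $I_i$; (3) deduce (ii) from (i) by an inverse estimate. On each $I_i$ the form $\sigma_i a^s(\cdot,\cdot)$ is (up to the sign of $\sigma_2$) coercive and continuous on $\widetilde H^s(I_i)$ with constants $|\sigma_i|$. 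C\'ea's lemma then gives
\[
\|u^s_{0,h}-u^s_0\|_{\widetilde H^s(I_i)}\le \inf_{v_h\in V_i^h}\|u^s_0-v_h\|_{\widetilde H^s(I_i)},
\]
where $V_i^h$ is the space with the endpoint basis functions removed.

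For step (2), the key input is the regularity of solutions of $(-\Delta)^s w=f/\sigma_i$ in $I_i$ with $w=0$ outside. Following Acosta--Borthagaray, for $f\in C^\beta(\overline I)$ one has $u^s_0\in \widetilde H^{s+1/2-\epsilon}(I_i)$ with
\[
\|u^s_0\|_{\widetilde H^{s+1/2-\epsilon}(I_i)}\lesssim \epsilon^{-1/2}\|f\|_{C^\beta(\overline I)} .
\]
This reflects the boundary behaviour $u^s_0\sim \mathrm{dist}(x,\partial I_i)^s$. I would apply it on $I_1=(0,b)$ and $I_2=(b,1)$, which are intervals of fixed length, with constants uniform for $s$ in a compact subset of $(\tfrac12,1]$. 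The standard interpolation estimate (Scott--Zhang or Lagrange interpolation, both admissible since $s>\tfrac12$ gives continuity) then yields
\[
\|u_0^s-\Pi_h u_0^s\|_{\widetilde H^s(I_i)}\lesssim h^{1/2-\epsilon}\|u^s_0\|_{\widetilde H^{s+1/2-\epsilon}(I_i)} .
\]
Choosing $\epsilon=1/|\log h|$, the factor $h^{-\epsilon}$ is bounded and $\epsilon^{-1/2}=|\log h|^{1/2}$. This gives (i), even with $|\log h|^{1/2}\le|\log h|$ for $h$ small. The main obstacle is this step: checking that the Acosta--Borthagaray regularity constant, and the uniform mesh used without grading, give a bound whose implicit constants do not blow up as $s\to1^-$. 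In particular the normalization $C(s)\sim(1-s)$ is built into the norm and must be tracked. I would cite \cite{AcostaBor} and \cite{BorthagarayThesis}, Chapter 3, for the precise statement.

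For (ii), under the extra hypothesis $u_0^s\in H^1(I)$, I would write $u^s_{0,h}-u^s_0=(u^s_{0,h}-\Pi_hu^s_0)+(\Pi_hu^s_0-u^s_0)$. The discrete part is handled by the one-dimensional inverse inequality $\|v_h\|_{H^1(I_i)}\lesssim h^{s-1}\|v_h\|_{\widetilde H^s(I_i)}$, combined with (i) and the triangle inequality. This gives a contribution of order $h^{s-1}h^{1/2}|\log h|=h^{s-1/2}|\log h|$. The interpolation part is estimated directly in $H^1$: since $u_0^s$ behaves like $\mathrm{dist}^s$ near the endpoints, it lies in $H^{s+1/2-\epsilon}$. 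Its $H^1$ interpolation error is therefore $h^{s-1/2-\epsilon}$ up to the same logarithmic factor, again with $\epsilon=1/|\log h|$. Summing the two contributions yields \eqref{ush-us-H1}.
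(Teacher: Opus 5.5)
Your proposal is correct and is essentially the paper's argument. The paper proves (i) by quoting Acosta--Borthagaray (Theorem 4.7) and (ii) by quoting Borthagaray--Ciarlet (Proposition 3.2), and your chain is precisely how those cited results are established: C\'ea's lemma, $\widetilde H^{s+1/2-\epsilon}$ regularity and interpolation with $\epsilon=1/|\log h|$ for (i), then splitting through the interpolant and applying the inverse inequality $\|v_h\|_{H^1}\lesssim h^{s-1}\|v_h\|_{\widetilde H^s}$ for (ii); since the statement fixes $s$, the uniformity as $s\to1^-$ you worry about is not needed here, and the paper does not track it either.

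One small correction: I believe the regularity bound in Acosta--Borthagaray is $\|u\|_{\widetilde H^{s+1/2-\epsilon}}\lesssim \epsilon^{-1}\|f\|_{C^\beta}$ rather than $\epsilon^{-1/2}$, which is where the full factor $|\log h|$ comes from, so the $|\log h|^{1/2}$ gain you mention should not be attributed to that reference, although the stated estimates follow unchanged.
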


\noindent {\em Proof.} %Let $1-s=h^k$ for a fixed $k> 1$.

\noindent
 (i) The estimate \eqref{ush-us-Hs} follows directly from \cite[Theorem 4.7]{AcostaBor}.
 % have

%\begin{equation}
%	\left\|u^s_{0,h}-u^s_0\right\|_{\widetilde{H}^s(I_i)} \lesssim h^{1/2}|\log(h)|\,\|f\|_{C^{\beta}(\overline{I_i})}.
%\end{equation}
%Therefore, the estimate \eqref{ush-us-Hs} follows directly.

\noindent
(ii) According to \cite[Proposition 3.2]{BorCia2}, we obtain \eqref{ush-us-H1}.
%we have
%
%\begin{equation}
%	\left\|u^s_{0,h}-u^s_0\right\|_{H^1(I_i)} \lesssim h^{s-(1/2)}|\log(h)|\,\|f\|_{C^{\beta}(\overline{I_i})}.
%\end{equation}
%Then, we get    
%   \begin{equation}
%   	\left\|u^s_{0,h}-u^s_0\right\|_{H^1(I_i)} \lesssim (1-s)^{(2s-1)/(2k)}|\log(1-s)|\,\|f\|_{C^{\beta}(\overline{I})} 
%   \end{equation}
%   Using Taylor series for small $1-s$, we have
%   $$
%   (1-s)^{(2s-1)/(2k)}\approx (1-s)^{1/(2k)}-(1-s)^{1/(2k)+1}\log(1-s).
%   $$
%The proof is concluded as
% $$
%% \left\{ 
%\begin{array}{rclll}
%	(1-s)^{(2s-1)/(2k)}|\log(1-s)|&\lesssim& (1-s)^{1/(2k)}|\log(1-s)|+(1-s)^{1/(2k)+1}|\log(1-s)|^2\\
%	&\leq& (1-s)^{1/(2k)}|\log(1-s)|.
%\end{array}
%% \right.
%$$
\hfill$\square$

Finally, the main convergence result reads :
\begin{prop} Let $s\in(\frac12,1)$. Assume that $f\in C^\beta(\overline{I})$ for some $\beta>0$. 	Let $u$ be the exact solution of the local problem \eqref{ClassicalProblem}, and let $u_h^s$ be the approximate solution of the simplified model introduced in Subsection \ref{Subsection3.3}, that is, the discrete reconstructed formulation obtained by setting $D_i=0$ for all $i \in \llbracket 1,N_h \rrbracket \setminus \{M\}$.
	\\		
	Assume moreover that $1-s=o(h)$, and denote by $u^s$ the solution given by \eqref{us}. 
	Then, for $1-s$ sufficiently small, the following estimates hold:	
	\noindent
{\em (i)} 
\begin{equation}\|u^s_h-u\|_{\widetilde{H}^s(I)}\lesssim h^{1/2}|\log(h)|\|f\|_{C^{\beta}(\overline{I})}.
\end{equation}

	\noindent	
{\em (ii)} If, moreover $u_0^s=u^s-u^s(b)\varphi^s\in H^1(I)$, then
	\begin{equation}
	\|u^s_h-u\|_{H^1(I)}\lesssim h^{1/2}|\log(h)|\|f\|_{C^{\beta}(\overline{I})}.
	\end{equation}

\end{prop}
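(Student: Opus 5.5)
The plan is to split the error along the two decompositions $u_h^s=u_{0,h}^s+u_h^s(b)\varphi^s$ and $u=u_0+u(b)\varphi$, inserting the continuous nonlocal part $u_0^s$. By the triangle inequality,
\[
\|u_h^s-u\|\le \|u_{0,h}^s-u_0^s\|+\|u_0^s-u_0\|+|u_h^s(b)-u(b)|\,\|\varphi^s\|+|u(b)|\,\|\varphi^s-\varphi\|,
\]
where $\|\cdot\|$ stands for either $\|\cdot\|_{\widetilde H^s(I)}$ or $\|\cdot\|_{H^1(I)}$. Each term is then controlled by an earlier result. The first is bounded by Proposition~\ref{cv-u0hs} (i), respectively (ii). The second is bounded by Proposition~\ref{cv-us}, since $f\in C^\beta(\overline I)\subset L^\infty(I)$. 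The third is bounded by Proposition~\ref{cv-ub-gen}, using $\|f\|_{H^{-s}(I)}\lesssim\|f\|_{C^\beta(\overline I)}$ together with the uniform bound on $\|\varphi^s\|$ (Remark~\ref{Norm-phis} and Proposition~\ref{cv-phi}). The fourth is bounded by Proposition~\ref{cv-phi}, with $|u(b)|\lesssim\|f\|$ from Theorem~\ref{thm:local-Tcoercive}. Because $u_0^s$ and $u_0$ vanish at $b$, the subdomain estimates on $I_1$ and $I_2$ can be glued into a global $\widetilde H^s(I)$ estimate by Proposition~\ref{NormWithoutIntermediate}, namely \eqref{NormControl}. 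In the $H^1$ case no gluing is needed, since the $H^1$ norms on $I_1$ and $I_2$ are simply additive.

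The terms are then collected using the regime $1-s=o(h)$. The $(1-s)$ terms from Propositions~\ref{cv-ub-gen} and \ref{cv-phi} are $o(h)$, hence $\lesssim h^{1/2}|\log h|$. The term $\sqrt{1-s}$ from Proposition~\ref{cv-us} is $o(h^{1/2})\le h^{1/2}|\log h|$ for small $h$. Therefore the dominant contribution in (i) is the finite element error $h^{1/2}|\log h|$, which gives the claim.

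For (ii), Proposition~\ref{cv-u0hs} (ii) gives $h^{s-1/2}|\log h|=h^{1/2}h^{-(1-s)}|\log h|$. I would bound $h^{-(1-s)}=e^{(1-s)|\log h|}$. Since $1-s=o(h)$, we have $(1-s)|\log h|\le h|\log h|\to0$, so this factor is bounded and the rate $h^{1/2}|\log h|$ is recovered.

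The main obstacle is the term $\|u_0^s-u_0\|$ in the $H^1$ norm. Proposition~\ref{cv-us} only controls the $\widetilde H^s$ norm, so it must be upgraded using the hypothesis $u_0^s\in H^1(I)$. My first attempt would be to interpolate between the $\widetilde H^s$ estimate and an $H^1$ bound, or to use the Galerkin projection: $\|u_0^s-u_0\|_{H^1}\le\|u_0^s-u_{0,h}^s\|_{H^1}+\|u_{0,h}^s-u_{0,h}\|_{H^1}+\|u_{0,h}-u_0\|_{H^1}$. The middle term is between discrete functions, so an inverse inequality gives $\lesssim h^{s-1}\|u_{0,h}^s-u_{0,h}\|_{\widetilde H^s}$. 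That norm is bounded by $\sqrt{1-s}+h^{1/2}|\log h|$ via Proposition~\ref{cv-us} and the $\widetilde H^s$ finite element estimates. I expect this step to be the delicate one and may need to accept a slightly degraded rate here. A second, smaller subtlety is the comparison between the $\widetilde H^s$ norm and the $H^1$ norm as $s\to1^-$, which relies on $C(s)\sim 1-s$ as used in Proposition~\ref{cv-phi}.
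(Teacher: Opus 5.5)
Your proposal is correct and follows the paper's proof essentially step by step. You use the same triangle-inequality split into the discrete error, the term $u_0^s-u_0$, the interface coefficient and the lifting, controlled by Propositions~\ref{cv-u0hs}, \ref{cv-us}, \ref{cv-ub-gen} and \ref{cv-phi}. For (ii) you take the same detour through $u_{0,h}^s-u_{0,h}$ with the inverse inequality. No degraded rate is actually needed there: as you already observed, $h^{s-1}=e^{(1-s)|\log h|}$ stays bounded when $1-s=o(h)$, so $h^{s-1}\bigl(\sqrt{1-s}+h^{1/2}|\log h|\bigr)+h\lesssim h^{1/2}|\log h|$, which is exactly how the paper closes the argument.
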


\begin{proof}
 (i) We have

\noindent  $\|u^s_h-u\|_{\widetilde{H}^s(I)}=\|u^s_{0,h}-u_h^s(b)\varphi^s-u_0-u(b)\varphi\|_{\widetilde{H}^s(I)}\leq \underbrace{\|u^s_{0,h}-u_0\|_{\widetilde{H}^s(I)}}_{K_1}+\underbrace{\|u_h^s(b)\varphi^s-u(b)\varphi\|_{\widetilde{H}^s(I)}}_{K_2}$.

\noindent Let us first treat $K_1$ :

$$K_1\leq\|u^s_{0,h}-u_0^s\|_{\widetilde{H}^s(I)}+\|u^s_{0}-u_0\|_{\widetilde{H}^s(I)}.$$

\noindent
We have

 $
\begin{array}{lllll}
\|u^s_{0,h}-u_0^s\|_{\widetilde{H}^s(I)}&\leq \|u^s_{0,h}-u_0^s\|_{\widetilde{H}^s(I_1)}+\|u^s_{0,h}-u_0^s\|_{\widetilde{H}^s(I_2)}
\\
&  \lesssim h^{1/2}|\log(h)|\,\|f\|_{C^{\beta}(\overline{I})} ,
\end{array}
$

 thanks to Proposition \ref{cv-u0hs}. Furthermore, according to  Proposition \ref{cv-us}, we obtain
\begin{equation}\label{estimate-Hs}
\|u_0^s-u_0\|_{\widetilde{H}^s(I)}\lesssim  \sqrt{1-s}\|f\|_{C^\beta(\overline{I})}.
\end{equation}

On the other hand, we have

$$
\begin{array}{lllll}
K_2&\leq&|u_h^s(b)| \|\varphi^s-\varphi\|_{\widetilde{H}^s(I)}+\|\varphi\|_{\widetilde{H}^s(I)} |u_h^s(b)-u(b)|
\\
&\lesssim &(|u(b)|+|u_h^s(b)-u(b)|) \|\varphi^s-\varphi\|_{\widetilde{H}^s(I)}+\|\varphi\|_{\widetilde{H}^s(I)} |u_h^s(b)-u(b)|
\\
&\lesssim &\|u\|_{H^1(I)} \|\varphi^s-\varphi\|_{\widetilde{H}^s(I)}+(\|\varphi^s-\varphi\|_{\widetilde{H}^s(I)}+\|\varphi\|_{H^1(I)}) |u_h^s(b)-u(b)|
\\
&\lesssim & (1-s)\|f\|_{L^2(I)}+(1-s)^2\|f\|_{H^{-s}(I)}+(1-s)\|f\|_{H^{-s}(I)}
\\
&\lesssim& (1-s)\|f\|_{C^\beta(\overline{I})}.
\end{array}
$$
thanks to Proposition \ref{cv-phi}.

Finally, we obtain
$$
\begin{array}{lllll}
\|u^s_h-u\|_{\widetilde{H}^s(I)}&\lesssim  \left[\sqrt{1-s}+h^{1/2}|\log(h)|\right]\|f\|_{C^{\beta}(\overline{I})}.
%\\
%&\lesssim  (1-s)^{1/(2k)} |\log(1-s)|  \|f\|_{C^\beta(\overline{I})}.
\end{array}
$$
Under the hypothesis $1-s=o(h)$, we have $\sqrt{1-s}=o(h^{1/2})$. This yields the desired result.

\smallbreak \noindent (ii) Regarding $\|u^s_h-u\|_{H^1(I)}$, we follow the same reasoning as (i) to obtain 
\begin{equation}
	\|u^s_h-u\|_{H^1(I)} \lesssim	h^{1/2}|\log(h)|\|f\|_{C^{\beta}(\overline{I})} + \|u^s_{0}-u_0\|_{H^1(I)}.
\end{equation}
		As $u_0^s=u^s-u^s(b)\varphi^s\in H^1(I)$,
	we have
		\begin{equation}\label{Ineq-rem-H1}
		\begin{array}{rcllll}
			\|u^s_{0}-u_0\|_{H^1(I)}& \leq 	\|u^s_{0}-u^s_{0,h}\|_{H^1(I)}+	\|u_0-u_{0,h}\|_{H^1(I)}+	\|u^s_{0,h}-u_{0,h}\|_{H^1(I)}.
		\end{array}
	\end{equation}
	For $1-s$ and $h$ sufficiently small,
		\begin{itemize}
				\item[$\bullet$] $\|u^s_{0}-u^s_{0,h}\|_{H^1(I)}\lesssim h^{s-(1/2)}|\log(h)|    \|f\|_{C^{\beta}(\overline{I})}$ thanks to Proposition \ref{cv-u0hs} ;																																																																																																																																																																																																																																																																																																																																																																																																																																																																																																																																																																																																																																																																																																																																																																													
			\item[$\bullet$] it is well known that $\|u_0-u_{0,h}\|_{H^1(I)} \lesssim h\|f\|_{L^2(I)}$ ;
		\item[$\bullet$] $\|u^s_{0,h}-u_{0,h}\|_{H^1(I)}\leq C^{1-s} h^{s-1}\|u^s_{0,h}-u_{0,h}\|_{\widetilde{H}^s(I)}$ (the inverse inequality, see \cite[Proposition 3.1]{BorCia2}).
		\\ However,
			$$
		\begin{array}{rcllll}
	\|u^s_{0,h}-u_{0,h}\|_{\widetilde{H}^s(I)}& \leq 	\|u^s_{0,h}-u^s_{0}\|_{\widetilde{H}^s(I)}+	\|u_{0,h}-u_0\|_{\widetilde{H}^s(I)}+	\|u^s_{0}-u_{0}\|_{\widetilde{H}^s(I)}.
		\end{array}
		$$
		On the other hand, we have
		\begin{itemize}
			\item[$\bullet$] $\|u^s_{0,h}-u^s_{0}\|_{\widetilde{H}^s(I)}\lesssim h^{1/2}|\log(h)|  \|f\|_{C^{\beta}(\overline{I})}$ by Proposition \ref{cv-u0hs} ;
			\item[$\bullet$] $\|u_{0,h}-u_0\|_{\widetilde{H}^s(I)}\lesssim\|u_{0,h}-u_0\|_{H^1(I)} \lesssim h\|f\|_{L^2(I)}$ ;
			\item[$\bullet$] $\|u^s_{0}-u_{0}\|_{\widetilde{H}^s(I)}\lesssim \sqrt{1-s}\|f\|_{L^\infty\left(I\right)}$ according to Proposition \ref{cv-us}.
		\end{itemize}
		Therefore, we get
		$$
		\|u^s_{0,h}-u_{0,h}\|_{\widetilde{H}^s(I)}\lesssim \left[\sqrt{1-s}+h^{1/2}|\log(h)| \right]   \|f\|_{C^{\beta}(\overline{I})}.
		$$
	\end{itemize}
	Hence, the inequality \eqref{Ineq-rem-H1} becomes
	$$
		\begin{array}{llll}
    	\|u^s_{0}-u_0\|_{H^1(I)}&\lesssim \left[h^{s-1}\left(\sqrt{1-s}+h^{1/2}|\log(h)|\right) + h^{s-(1/2)}|\log(h)|    \right]   \|f\|_{C^{\beta}(\overline{I})}
    	\\\\
    	&\lesssim \left[h^{s-1}\sqrt{1-s} + h^{s-(1/2)}|\log(h)|    \right]   \|f\|_{C^{\beta}(\overline{I})}.
    		\end{array}
	$$
Under the hypothesis $1-s=o(h)$, we have $(s-1)\log (h)\to0$ then $h^{s-1}\approx 1+(1-s) |\log(h)|$. Therefore, $h^{s-1}\sqrt{1-s}\approx \sqrt{1-s}+(1-s)^{3/2}|\log(h)|\lesssim \sqrt{1-s}$ and $h^{s-(1/2)}|\log(h)|=h^{1/2}h^{s-1}|\log(h)|\approx h^{1/2}|\log(h)|+(1-s)h^{1/2}|\log(h)|^2\lesssim h^{1/2}|\log(h)|$. As $\dfrac{1-s}{h^{1/2}|\log(h)|}\to 0$, we get 
	$$
	\|u^s_{0}-u_0\|_{H^1(I)}\lesssim  h^{1/2}|\log(h)|\|f\|_{C^{\beta}(\overline{I})}.
	$$
\end{proof}

\begin{rem}
	Previously, we established the convergence of $u_h^s$ to $u$ as $s \to 1^-$ and $h \to 0^+$ simultaneously. However, it is also possible to study two other distinct types of convergence:
	\begin{itemize}
		\item[1)] For a fixed $s$, the convergence of $u_h^s$ to $u^s$ as $h \to 0^+$;
		\item[2)] For a fixed $h$, the convergence of $u_h^s$ to $u_h$ as $s \to 1^-$.
	\end{itemize}
	
	\medskip
	\noindent\textbf{1) Convergence as $h \to 0^+$ for fixed $s$.} In this case, we compare the exact and approximated solutions of the simplified new model. {In the case where $s \to 1^-$ and $h \to 0^+$ simultaneously, we prove the convergence by using directly $\varphi^s$ instead of its interpolate, as
	$ \|\varphi_h^s - \varphi^s\|_{\widetilde{H}^{s}(I)} \lesssim (1-s) h^{1-s}$.	
		 We then get the result $|u_h^s(b)-u(b)|\lesssim (1-s)\|f\|_{H^{-s}(I)}$ in Proposition \ref{cv-ub-gen}. As here we need a result for a fixed $s$ and $h \to 0^+$, we reconsider the interpolate.} \\
	For a fixed $s$, we have the estimate
	\[
	\|u_h^s - u^s\|_{\widetilde{H}^s(I)} \leq \|u_{0,h}^s - u_0^s\|_{\widetilde{H}^s(I)} + \|u_h^s(b)\varphi_h^s - u^s(b)\varphi^s\|_{\widetilde{H}^s(I)}.
	\]
	Thanks to \cite[Theorem 4.7]{AcostaBor}, it holds that
	\[
	\|u_{0,h}^s - u_0^s\|_{\widetilde{H}^s(I)} \lesssim h^{1/2} |\log h| \, \|f\|_{C^\beta(\overline{I})}.
	\]
	Moreover, assuming that all $D_i = 0$ for $h$ small enough, we have
	\[
	c^* u^s(b) = \int_I f(x) \varphi^s(x)\mathrm{d}x, \quad \text{and} \quad c_h u^s_h(b) = \int_I f(x) \varphi_h^s(x)\mathrm{d}x.
	\]
	Thus, by adding and subtracting $u_h^s(b)\varphi^s$, we get
	$$
	\begin{array}{rclll}
		\|u_h^s(b)\varphi^s_h-u^s(b)\varphi^s\|_{\widetilde{H}^s(I)}&\leq [|u_h^s(b)-u^s(b)|+|u^s(b)|]\|\varphi_h^s - \varphi^s\|_{\widetilde{H}^s(I)}+ |u_h^s(b)-u^s(b)|\| \varphi^s\|_{\widetilde{H}^s(I)},
	\end{array}
	$$
	where 
	\[
	u_h^s(b) - u^s(b) = \frac{1}{c_h c^*} \left[ c^* \int_I f(x) \varphi_h^s(x)\mathrm{d}x - c_h \int_I f(x) \varphi^s(x)\mathrm{d}x \right].
	\]
Thanks to \cite[Subsubsection 3.3.1]{BorthagarayThesis}, we know that   (We hide $1-s$ as $s$ is fixed)
	\[
	\|\varphi_h^s - \varphi^s\|_{\widetilde{H}^s(I)} \lesssim h^{1 - s}.
	\]
In addition, we estimate
$$
\begin{array}{lllll} 
	|u_h^s(b)-u^s(b)|&= \dis\dfrac{1}{|c_hc^*|}\left|c^*\int_I f(x)\varphi_h^s(x)\mathrm{d}x-c_h\int_I f(x)\varphi^s(x)\mathrm{d}x+c_h\int_I f(x)\varphi_h^s(x)\mathrm{d}x-c_h\int_I f(x)\varphi_h^s(x)\mathrm{d}x\right|
	\\\\
	& = \dis\dfrac{1}{|c_hc^*|}\left|c_h \int_I f(x)[\varphi_h^s(x)-\varphi^s(x)]\mathrm{d}x+(c^*-c_h)\int_I f(x)\varphi_h^s(x)\mathrm{d}x\right|
	\\\\
	& \leq\dfrac{1}{|c^*|}\|f\|_{H^{-s}(I)} \|\varphi_h^s - \varphi^s\|_{\widetilde{H}^s(I)} + \dfrac{|c_h-c^*|}{|c_hc^*|}  \|f\|_{H^{-s}(I)} \left[ \|\varphi_h^s - \varphi^s\|_{\widetilde{H}^s(I)}+\| \varphi^s\|_{\widetilde{H}^s(I)}\right] 
	\\\\
	&\lesssim h^{1-s} \|f\|_{H^{-s}(I)} + \dfrac{|c_h-c^*|}{|c_hc^*|}  \|f\|_{H^{-s}(I)} \left[h^{1-s}+\| \varphi^s\|_{\widetilde{H}^s(I)}\right].
\end{array}
$$

	It remains to show that $|c_h - c^*|=	|a^s_{\underline{\sigma}}(\varphi_h^s, \varphi_h^s) - a^s_{\underline{\sigma}}(\varphi^s, \varphi^s)| \lesssim h^\alpha$. To this end, consider
	\[
	\begin{aligned}
		|a^s_{\underline{\sigma}}(\varphi_h^s, \varphi_h^s) - a^s_{\underline{\sigma}}(\varphi^s, \varphi^s)|
		&= |a^s_{\underline{\sigma}}(\varphi_h^s + \varphi^s, \varphi_h^s - \varphi^s)| \\
		&\leq C \|\varphi_h^s + \varphi^s\|_{\widetilde{H}^s(I)} \|\varphi_h^s - \varphi^s\|_{\widetilde{H}^s(I)} \\
		&{\lesssim h^{1 - s}}.
	\end{aligned}
	\]
When $h\to 0^+$, $c_h\to c*>0$ and $\dfrac{1}{|c_hc^*|}\leq C$.

Finally, we obtain
	\[
	\|u_h^s - u^s\|_{\widetilde{H}^s(I)} \lesssim [h^{1 - s} + h^{1/2} |\log h|] \, \|f\|_{C^\beta(\overline{I})}.
	\]
	
	\medskip
	\noindent\textbf{2) Convergence as $s \to 1^-$ for fixed $h$.} \\
	For a fixed $h$, we have
	\begin{equation} \label{NormH1-Fixed-h}
		\|u_h^s - u_h\|_{H^1(I)} \leq \|u_{0,h}^s - u_{0,h}\|_{H^1(I)} + \|u_h^s(b) \varphi^s - u_h(b) \varphi\|_{H^1(I)}.
	\end{equation}
	We note that $a(u_{0,h}, v_h)-a^s(u_{0,h}^s, v_h)  =\dfrac{1}{\sigma_k}\dis\int_{I_k} f(x)v_h(x)\mathrm{d}x-\dfrac{1}{\sigma_k}\dis\int_{I_k} f(x)v_h(x)\mathrm{d}x=0$ for any $v_h \in V_h$, where $a(u, v) := \dis\int_I  u'(x) v' (x)\mathrm{d}x$. 
	
	Moreover, for any 
	 $ u,v \in H_0^1(I)$, we have
	\[
	a^s(u, v) - a(u, v) \lesssim (1 - s)	\|u\|_{H^1(I)}\|v\|_{H^1(I)},
	\]
see Remark \ref{Proof-as-a}.

	Let us denote $E_h :=  u_{0,h}-u_{0,h}^s$. Then,
	\[
	\begin{aligned}
		\|E_h\|^2_{H^1(I)} &= a(E_h, E_h) \\
		&=   \cancel{a(u_{0,h}, E_h)}-a(u_{0,h}^s, E_h)+a^s(u_{0,h}^s,E_h)-\cancel{a^s(u_{0,h}^s,E_h)} \\
		&= a^s(u_{0,h}^s, E_h) - a(u_{0,h}^s, E_h) \\
		&\lesssim (1 - s)\|u_{0,h}^s\|_{H^1(I)}\|E_h\|_{H^1(I)}.
	\end{aligned}
	\]		
	Therefore,
	$$
	\|E_h\|_{H^1(I)}\lesssim (1 - s)\|u_{0,h}^s\|_{H^1(I)}\leq (1 - s)\left[\|u_{0,h}\|_{H^1(I)}+\|E_h\|_{H^1(I)}\right].
	$$
	For $1-s$ sufficiently small
		$$
	\|E_h\|_{H^1(I)}\lesssim (1 - s)\|u_{0,h}^s\|_{H^1(I)}\leq (1 - s)\|u_{0,h}\|_{H^1(I)}.
	$$
%	\textcolor{red}{On the other hand, we have
%		$$
%	\begin{array}{lll}
%		 \|u_h^s(b) \varphi^s - u_h(b) \varphi\|_{H^1(I)}&\leq  |u_h^s(b) - u_h(b) | [ \|\varphi^s - \varphi\|_{H^1(I)}+ \|\varphi\|_{H^1(I)}]+| u_h(b) |  \|\varphi^s - \varphi\|_{H^1(I)}
%		 \\\\
%		 &\lesssim |u_h^s(b) - u_h(b) | [(1-s)+ \|\varphi\|_{H^1(I)}]+(1-s)
%	\end{array}
%		$$
%	Further, using the midpoint rule, we have
%	$$
%	u_h(b)=\frac{1}{\tilde{c}} \frac{1}{N} \sum_{i=1}^{N} f(x_i)\varphi(x_i)
%	$$
%	and
%		$$
%	u^s_h(b)=\frac{1}{c^*} \frac{1}{N} \sum_{i=1}^{N} f(x_i)\varphi^s(x_i),
%	$$ 	
%where $x_i =\frac{2i-1}{N}$ $(N=100)$. Then, 
%$$
%\begin{array}{llll}
%|	u_h^s(b) - u_h(b) |\lesssim 1-s.
%\end{array}
%$$
%	 }
On the other hand, we have
	 	$$
	 	\begin{array}{lll}
	 		\|u_h^s(b) \varphi^s - u_h(b) \varphi\|_{H^1(I)}&\leq  |u_h^s(b) - u_h(b) | [ \|\varphi^s - \varphi\|_{H^1(I)}+ \|\varphi\|_{H^1(I)}]+| u_h(b) |  \|\varphi^s - \varphi\|_{H^1(I)}
	 		\\\\
	 		&\lesssim |u_h^s(b) - u_h(b) | [(1-s)+ \|\varphi\|_{H^1(I)}]+(1-s)| u_h(b) |
	 	\end{array}
	 	$$
	 	Further, as
	 	$$
	 	u_h(b)=\frac{1}{\tilde{c}} \int_I f(x)\varphi(x)\,\mathrm{d}x
	 	$$
	 	and
	 	$$
	 	u^s_h(b)=\frac{1}{c^*} \int_If(x)\varphi^s(x)\,\mathrm{d}x,
	 	$$ 	
	 	then
	 	$$
	 	\begin{array}{llll}
	 		|	u_h^s(b) - u_h(b) |\lesssim (1-s)\|f\|_{H^{-s}(I)}.
	 	\end{array}
	 	$$	 
	Therefore, from \eqref{NormH1-Fixed-h}, it follows that
	\[
	\|u_h^s - u_h\|_{H^1(I)} \lesssim (1 - s)\|u_{0,h}\|_{H^1(I)}.
	\]
\end{rem}

\begin{rem}\label{Proof-as-a}
		We show that, as $s\to 1^-$,	$\forall u,v \in H_0^1(I)$,
		\[
		a^s(u, v) - a(u, v) \lesssim (1 - s)	\|u\|_{H^1(I)}\|v\|_{H^1(I)}.
		\]
		\bigbreak\bigbreak
		Let us decompose $\dis \int_{\mathbb R}\int_{\mathbb R} \ldots =  \iint_{|x-y|\geq \varepsilon}\ldots + \iint_{|x-y|< \varepsilon}\ldots+2\int_{I}\int_{\mathbb R \setminus I}\ldots$.
		\begin{itemize}
			\item First, on the far-diagonal region \(|x-y|\geq\varepsilon\) the kernel is bounded by \(\varepsilon^{-1-2s}\), hence
			\[
			\begin{array}{lll}
				\dis	\iint_{|x-y|\geq\varepsilon} \frac{(u(x)-u(y))(v(x)-v(y))}{|x-y|^{1+2s}}\,\mathrm{d}y\,\mathrm{d}x&\leq\dis
				2\varepsilon^{-1-2s}\|u\|_{L^2(I)}\|v\|_{L^2(I)}
				&\leq\dis
				4C_1\varepsilon^{-1-2s}\|u\|_{H^1(I)}\|v\|_{H^1(I)}.
				\\\\
			\end{array}
			\]
			\item Second, we deal with the near-diagonal region \(|x-y|<\varepsilon\). Near $x$, we use a Taylor expansion, That is,
			\[
			u(x)-u(y)\approx u'(x)(x-y), \quad 	v(x)-v(y)\approx v'(x)(x-y).
			\] 
			Thus, 
			\[
			\begin{array}{lll}
				\dis	\iint_{|x-y|<\varepsilon} \frac{(u(x)-u(y))(v(x)-v(y))}{|x-y|^{1+2s}}\,\mathrm{d}y\,\mathrm{d}x&\approx \dis
				\iint_{|x-y|<\varepsilon} u'(x)v'(x) \frac{|x-y|^2}{|x-y|^{1+2s}}\,\mathrm{d}y\,\mathrm{d}x
				\\\\
				&= \dis
				\frac{\varepsilon^{2-2s}}{1-s}
				\int_{|x-y|<\varepsilon}  u'(x)v'(x) \mathrm{d}x
				\\\\
				&\leq  \dis
				\frac{\varepsilon^{2-2s}}{1-s} a(u,v).
			\end{array}
			\]
			\item Third, we have
			\[
			\begin{array}{lll}
				\dis \int_{I}\int_{\mathbb R\setminus I}\frac{u(x)v(x)}{|x-y|^{1+2s}}\,\mathrm{d}y\,\mathrm{d}x&\dis =	\dis \int_{I}u(x)v(x)\int_{\mathbb R\setminus I}\frac{\mathrm{d}y}{|x-y|^{1+2s}}\,\mathrm{d}x
				\\\\
				&\dis \leq \frac{1}{2s}\dis \int_{I}\frac{u(x)v(x)}{\mathrm{dist}(x,\partial I)^{2s}}\,\mathrm{d}x
				\\\\
				&\dis \leq \frac{1}{2s}\dis \left(\int_{I}\frac{u(x)}{\mathrm{dist}(x,\partial I)^{2s}}\,\mathrm{d}x\right)^{\frac12}\left(\int_{I}\frac{v(x)}{\mathrm{dist}(x,\partial I)^{2s}}\,\mathrm{d}x\right)^{\frac12}
				\\\\
				&\dis \leq \frac{C_2}{2s} \|u\|_{H^1(I)}\|v\|_{H^1(I)}.
			\end{array}
			\]
		\end{itemize}
		Finally,
		$$
		\int_{\mathbb R}\int_{\mathbb R}\frac{(u(x)-u(y))(v(x)-v(y))}{|x-y|^{1+2s}}\,\mathrm{d}y\,\mathrm{d}x	\leq \dis
		\frac{\varepsilon^{2-2s}}{1-s} a(u,v)+\left[4C_1\varepsilon^{-1-2s}+ \frac{C_2}{2s}\right] \|u\|_{H^1(I)}\|v\|_{H^1(I)}.
		$$
		Therefore, we get, for $1-s$ sufficiently small,
		$$
		a^s(u,v)-a(u,v)\lesssim\left[	\frac{C(s)\varepsilon^{2-2s}}{2(1-s)} -1\right]a(u,v)+	C(s) \|u\|_{H^1(I)}\|v\|_{H^1(I)}.
		$$
		As $\dis\frac{C(s)\varepsilon^{2-2s}}{2(1-s)} \sim s\sim 1$, we obtain the desired result.
\end{rem}

\section{Numerical simulations}\label{Section5}

In this section, we present some finite element simulations. More precisely, we compare the four formulations introduced earlier: the \emph{local model} (LM), the \emph{old model} (OM), the \emph{new model} (NM), and the \emph{simplified new model} (SNM). To do so, let us recall the choice of the cross coefficient:
\[
\sigma_3 = \frac{\sigma_1+\sigma_2}{2}\quad\text{for the old model}, 
\qquad \sigma_3 = 0\text{ [with } \varphi^s]\quad\text{for the new and simplified new models}.
\]

\subsection{Setup}
Recall that
\[
I=(0,1), \qquad I_1=(0,b), \quad I_2=(b,1), 
\]
with homogeneous Dirichlet condition on $\mathbb{R}\setminus I$. 
The source term is chosen as
\[
f(x) = x^{\alpha}, \qquad \alpha>-\dfrac{1}{2},
\]
so that for the local problem (corresponding to $s\to 1^-$) the exact solution is explicitly known. More precisely,
The exact solution to %\eqref{ClassicalProblem}
\begin{equation} \label{ClassicalProblem-}
    \left\{   
\begin{array}{rclll}
	-\text{div} (\sigma(x) \nabla u) & =f & \text{in} \; I,
	\\
	u(0)=u(1)&=0, & %\text{on} \; \partial I,
\end{array}     
\right.
\end{equation}
 is given by:
\begin{equation}
	u_1(x)=-\frac{x^{\alpha+2}}{\sigma_1(\alpha+1)(\alpha+2)}+\lambda x, \quad 	u_2(x)=\frac{ 1-x^{\alpha+2}}{\sigma_2(\alpha+1)(\alpha+2)}+\lambda\frac{\sigma_1}{\sigma_2}( x-1 ),
\end{equation}
where $\lambda:=\dfrac{\sigma_1(1-b^{\alpha+2})+\sigma_2  b^{\alpha+2}}{\sigma_1(\alpha+1)(\alpha+2)[\sigma_1(1-b)+\sigma_2b]}$ if $\left|\dfrac{\sigma_2}{\sigma_1}\right|\neq \dfrac{1-b}{b}$.

%This allows us to compute directly the error of the nonlocal approximations with respect to the exact local solution. 
\medbreak
In the following numerical experiments, we consider both the classical local problem \eqref{ClassicalProblem-}
and its nonlocal counterparts for $s\in(\frac12,1)$. The local exact solution is used as a reference to study
the convergence of the nonlocal solutions as $s\to 1^{-}$, as well as the accuracy of the proposed numerical
schemes. 
	Indeed, we investigate the influence of the fractional order $s$, the mesh size $h$, and the contrast
between the coefficients $\sigma_1$ and $\sigma_2$ on the convergence behavior.

\subsection{Test A: Comparison of the four models}
We first compare the numerical solutions obtained from the old, new, and simplified new models for fixed parameters
\begin{samepage}
\[
(A1)\qquad s=0.75, \qquad \alpha=0, \qquad h=2^{-9}.
\]
	\vspace*{-1cm}
\begin{figure}[H]
	\centering
	\includegraphics[width=0.66\textwidth]{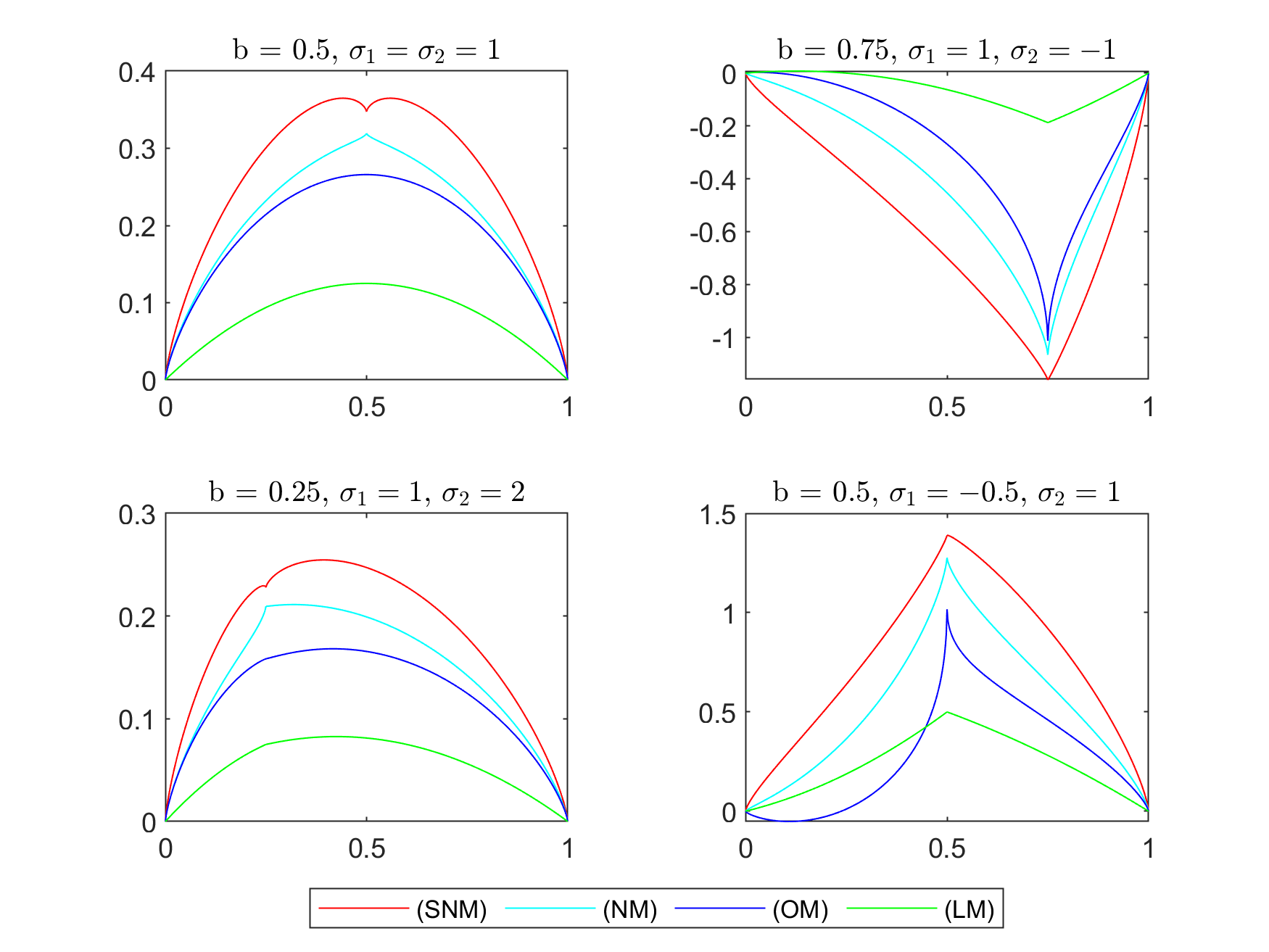}
	\caption{Comparison of solutions for the (exact) local, old, new, and simplified new models.% with  $s=0.75$, $h=2^{-9}$, and $f(x)=1$.
	}
	\label{fig:TestA1}
\end{figure}
\end{samepage}

\begin{samepage}
	\[
	(A2)\qquad s=0.9, \qquad  \alpha=1, \qquad h=2^{-9}.
	\]	
	\vspace*{-1cm}
	\begin{figure}[H]
		\centering
		\includegraphics[width=0.66\textwidth]{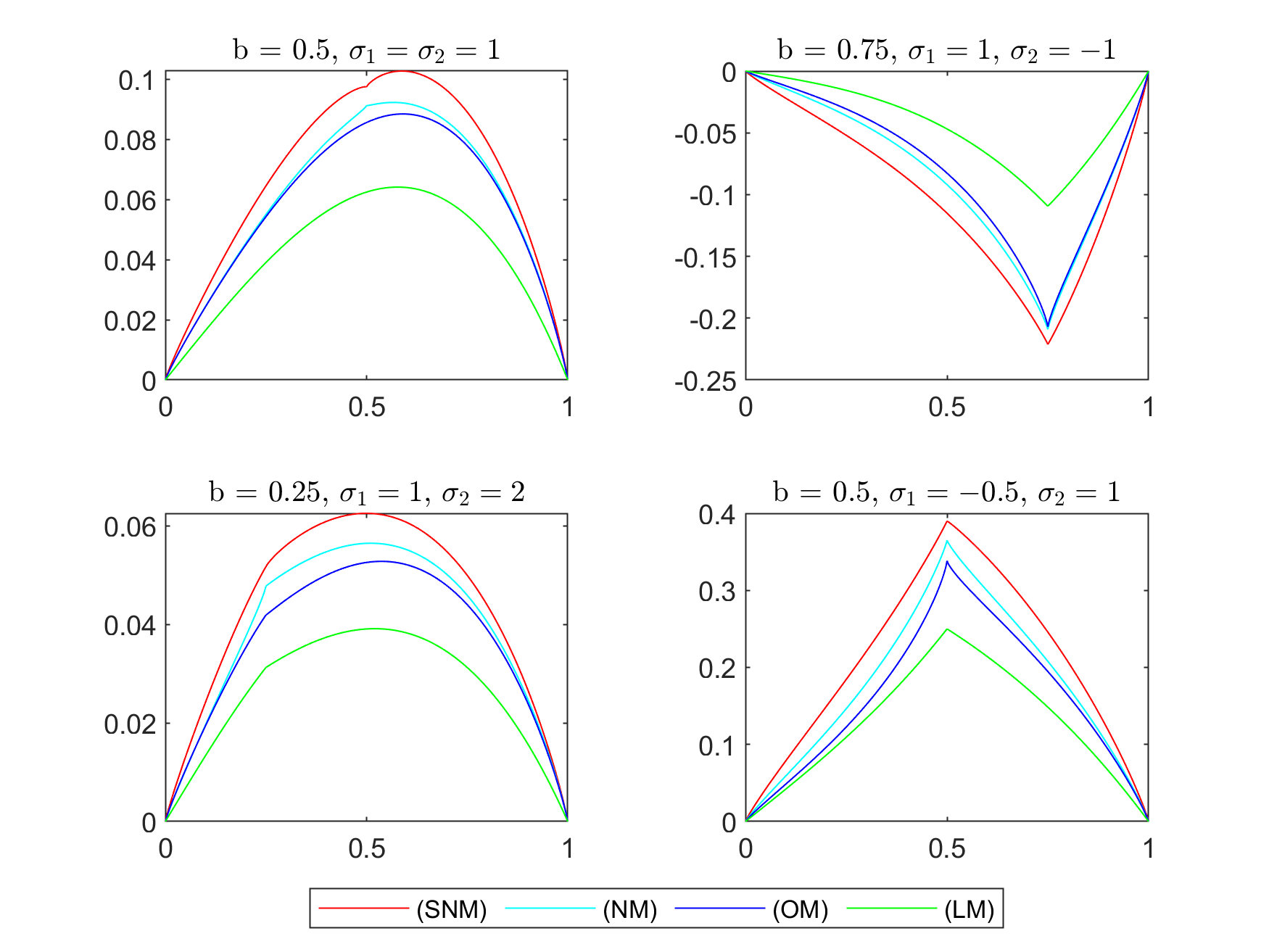}
		\caption{Comparison of solutions for the (exact) local, old, new, and simplified new models.% with  $s=0.9$, $h=2^{-9}$, and $f(x)=x$.
		}
		\label{fig:TestA2}
	\end{figure}
\end{samepage}

Figures~\ref{fig:TestA1}–\ref{fig:TestA2} display the solutions obtained with the four models for representative values of $s$, $\alpha$, and a fixed mesh size $h$.

Having established the consistency of (SNM) at the level of the solution profiles, we now investigate the behavior of the numerical solution of (SNM) as $s \to 1^-$. 
To this end, we denote by $e_h$ the difference between the numerical solution obtained with the simplified new model and the exact solution of the local model (LM).

In this test, the mesh size is fixed to $h = 2^{-9}$, while the parameter $(1-s)$ varies in the range $[5\times 10^{-4},\,4\times 10^{-2}]$. 
Figure~\ref{fig:SlopesA1} displays the evolution of  $\|e_h\|_{H^1}$ as a function of $(1-s)$ for four representative configurations of the interface location and coefficients.

\begin{figure}[H]
	\centering
	\includegraphics[width=0.75\textwidth]{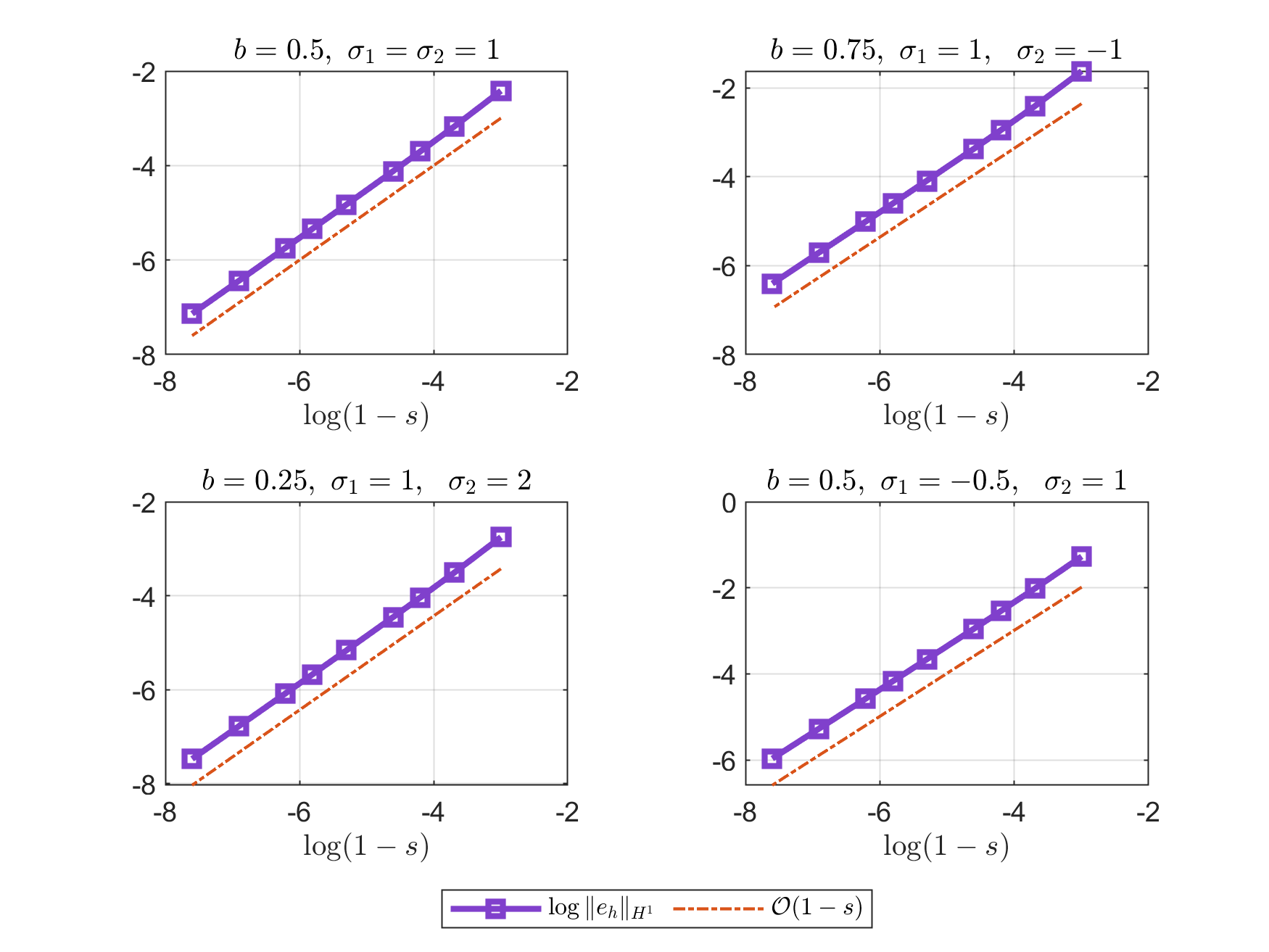}
	\caption{Convergence behavior of the $H^1$-error with respect to $(1-s)$ for the four configurations of (SNM), with $h = 2^{-9}$ and $\alpha=0$.
	}
	\label{fig:SlopesA1}
\end{figure}
In all cases, the numerical error exhibits a clear linear behavior with respect to $(1-s)$ in log-log scale. 
More precisely, the numerical curves are parallel to the reference line $\mathcal{O}(1-s)$ suggesting that
\[
\|e_h\|_{H^1} = \mathcal{O}(1-s) \quad \text{as } s \to 1^-,
\]
uniformly with respect to the tested configurations.
%This observation is fully consistent with the theoretical analysis of the nonlocal-to-local convergence of the simplified new model, which predicts that the dominant consistency error arises from the approximation of the local operator by its fractional counterpart and scales linearly with $(1-s)$.

%We emphasize that this convergence rate is observed for fixed spatial discretization, confirming that the error displayed in Figure~\ref{fig:SlopesA1} is genuinely driven by the nonlocal parameter and not by the mesh size.

\subsection{Test B: Convergence as $h\to0^+$ and $s\to1^-$}
In this test, we investigate the combined asymptotic regime in which both the mesh size $h$ tends to zero and the fractional parameter $s$ approaches the local limit $s \to 1^-$. 
This regime is particularly relevant from both theoretical and numerical viewpoints, since the total error results from the interplay between the discretization error associated with $h$ and the nonlocal-to-local consistency error driven by $(1-s)$.
To this end, we consider a sequence of coupled parameters $(h,s)$, where $h=2^{-k}$ for $k\in\{4,8,\dots,12\}$ and $(1-s)$ varies in the range $[5\times 10^{-4},\,4\times 10^{-2}]$.

Figures~\ref{fig:TestB1} and~\ref{fig:TestB2} illustrate the convergence of the numerical solutions obtained with the nonlocal models towards the exact local solution for two representative configurations.
In both cases, one clearly observes that, as $h$ decreases and $s$ approaches $1$, the solutions of the simplified new model (SNM) progressively align with the local solution.

%This behavior is consistent across all refinement levels and confirms that the combined limit $(h,s)\to(0^+,1^-)$ is correctly captured by the proposed formulation.

\begin{samepage}
	\[
	(B1) \qquad b=0.5,  \qquad (\sigma_1,\sigma_2)=(1,1), \qquad \alpha=0.
	\]
	\begin{figure}[H]
		\centering
		\includegraphics[width=0.66\textwidth]{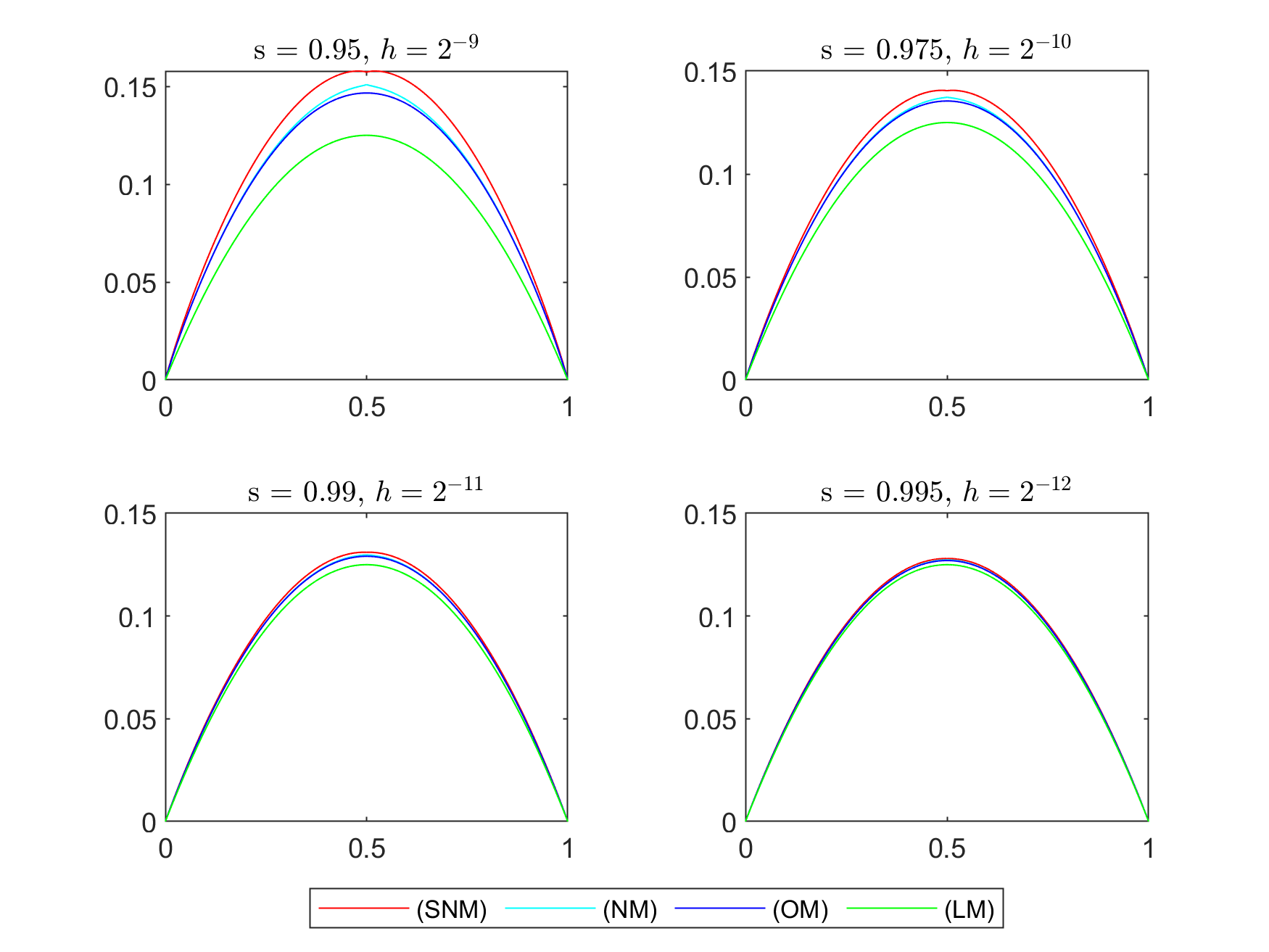}
		\caption{Convergence of nonlocal models towards the local one as $h\to0^+$ and $s\to1^-$.}
	\label{fig:TestB1}
	\end{figure}
\end{samepage}

A quantitative assessment of this convergence is provided in Figure~\ref{fig:SlopesB1}, where the same $H^1$-error is reported as a function of $h$ and $(1-s)$ along the considered sequence of parameter pairs.
The results show that the error exhibits a clear decay with respect to both variables, and that the observed trends are well approximated by reference slopes of order $\mathcal{O}(1-s)$ and $\mathcal{O}(h^\beta)$, with $\beta\approx0.85$.

Overall, these results demonstrate that the simplified new model remains stable and accurate when both parameters are refined simultaneously, and that it converges towards the local model in the relevant joint limit $h\to0^+$ and $s\to1^-$.

\begin{samepage}
	\[
	(B2) \qquad b=0.75,  \qquad (\sigma_1,\sigma_2)=(1,-1), \qquad \alpha=1.
	\]
	\begin{figure}[H]
		\centering
		\includegraphics[width=0.73\textwidth]{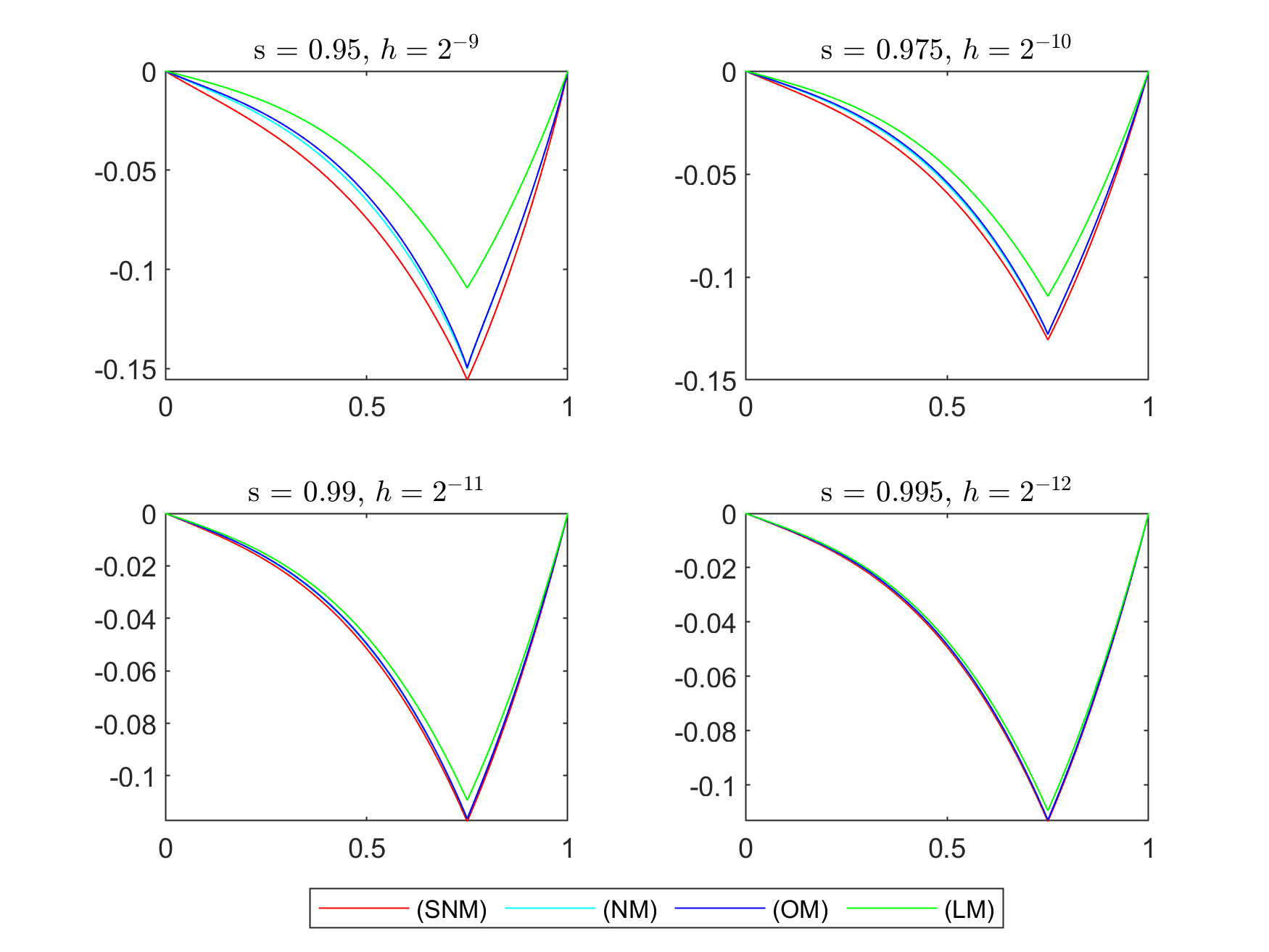}
		\caption{Convergence of nonlocal models towards the local one as $h\to0^+$ and $s\to1^-$.}
		\label{fig:TestB2}
	\end{figure}
\end{samepage}

\begin{figure}[H]
	\centering
	\includegraphics[width=0.47\textwidth]{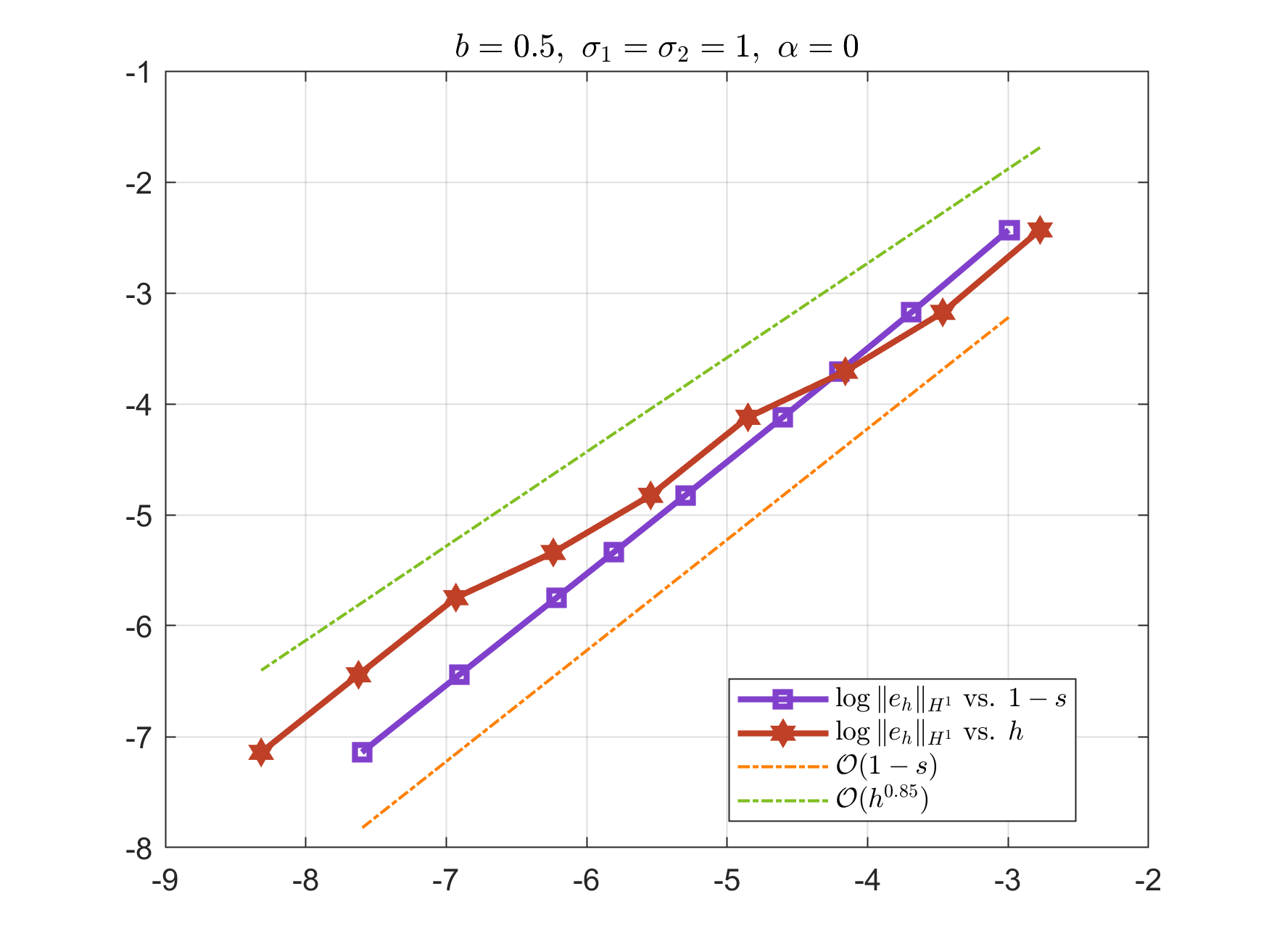}	\includegraphics[width=0.47\textwidth]{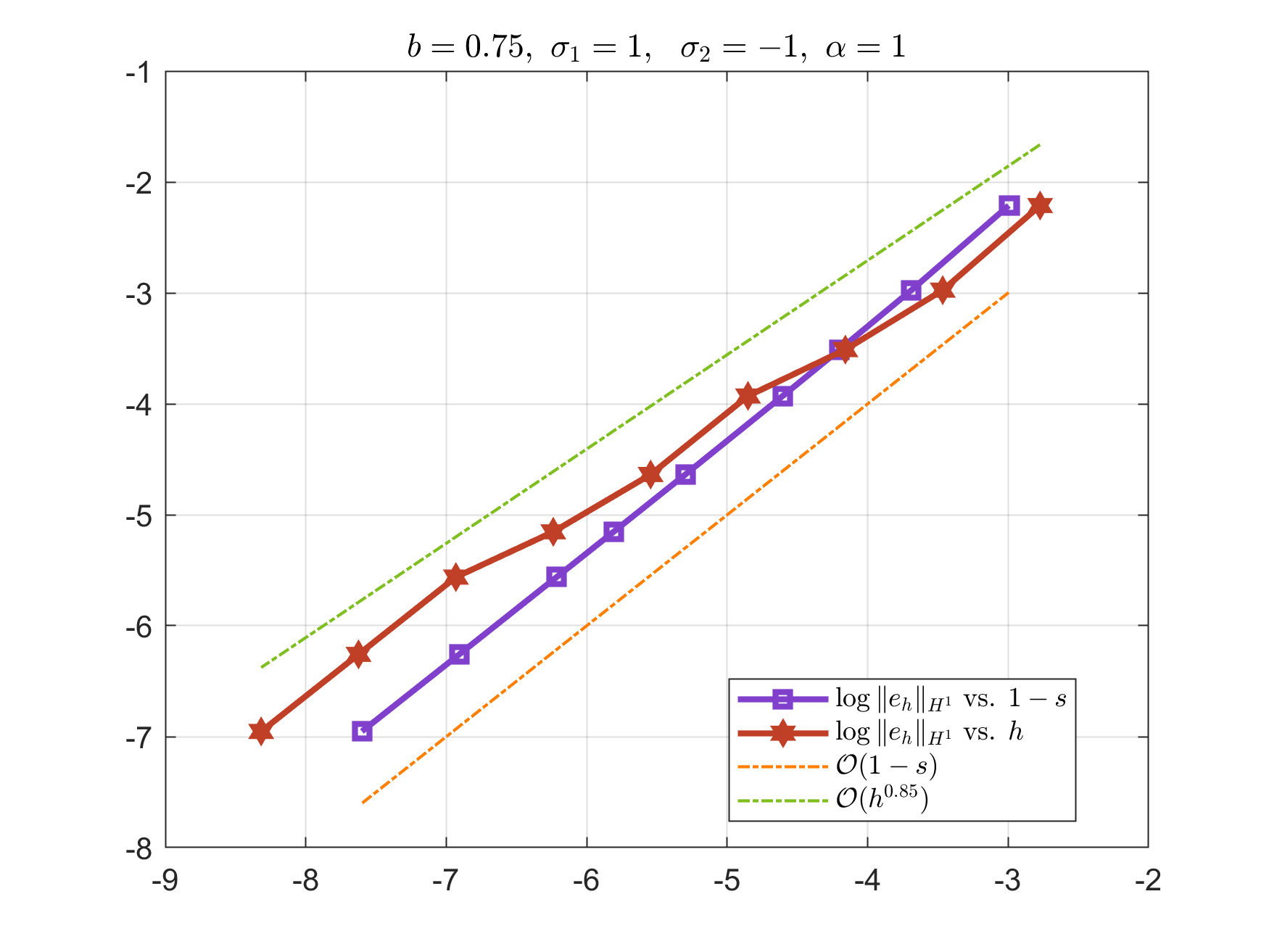}
	\caption{Convergence behavior of the $H^1$-error with respect to  $(1-s)$ and $h$ for two configurations of~(SNM).
	}
	\label{fig:SlopesB1}
\end{figure}

\hfill $\blacksquare$

\medbreak
Before concluding the 1D study, we point out an additional advantage of the
simplified nonlocal model. Besides its well-posedness and its consistency with
the local limit as $s\to 1^-$, the formulation naturally extends to a
multi-subdomain setting. Each subdomain can then be treated independently at
the discrete level, while the global interaction is recovered through only a
few interface unknowns. As a result, the global linear system has a natural
block structure: the diagonal blocks $\mathbb{A}_i$ represent the stiffness
matrices associated with the subdomain operators. A schematic illustration is given in
Figure~\ref{fig:block-structure}.

\begin{figure}[h]
	\centering
	\includegraphics[width=.5\textwidth]{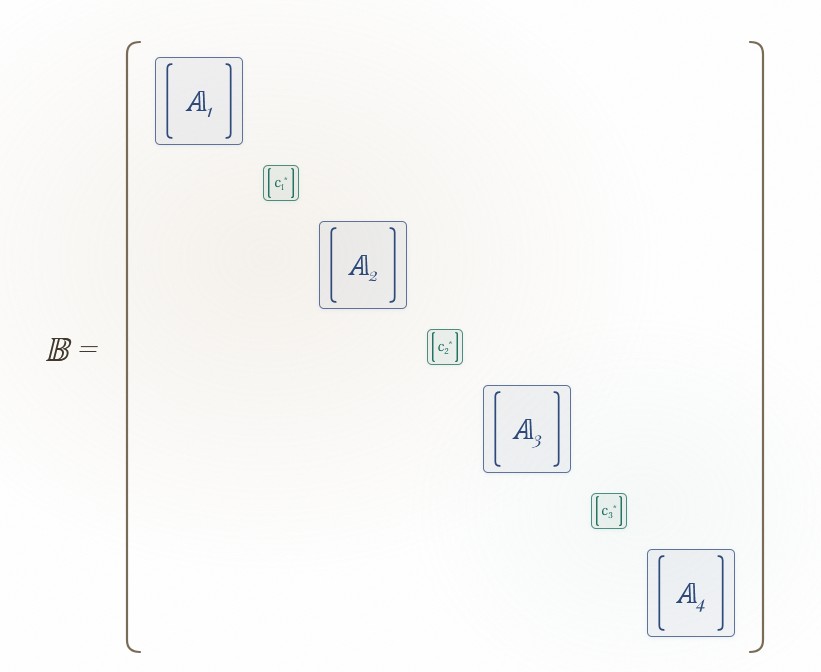}
	\caption{Schematic block structure of the global discrete system in a multi-subdomain setting.}
	\label{fig:block-structure}
\end{figure}

\section{Preliminary 2D numerical illustration}\label{Section6}
This section is exploratory in nature. We do not claim here any two-dimensional
well-posedness or convergence result. Its purpose is only to illustrate that
the interface-reconstruction strategy introduced in the one-dimensional setting
can also be implemented numerically in a simple 2D configuration. The implementation relies in part on the finite element approach introduced in
\cite{AcostaBersetcheBorthagaray}, as well as on a code shared by
J.~P.~Borthagaray and adapted by the author to the present setting.
\paragraph{First numerical tests.} 
Let us consider the domain
$
\Omega = (0,1)\times(0,1),
$
with an interface located at
$
\Gamma := \{b\}\times(0,1),
$
which splits the domain into the two subdomains
\[
\Omega_1 := (0,b)\times(0,1), \qquad
\Omega_2 := (b,1)\times(0,1).
\]

Let us assume that $f(x,y)=1$ and $b=0.5$.

We compare the reconstructed solution $u_h^s$ with a reference solution obtained by solving the fractional problem directly on $\Omega$, using the choice
\[
\sigma_3 := \frac{\sigma_1+\sigma_2}{2}.
\]

Figure~\ref{fig:s1} shows the two solutions for $s=0.999$ in the case $\sigma_1=1$ and $\sigma_2=-0.5$. Figure~\ref{fig:s2} displays the same comparison for the more contrasted configuration $\sigma_1=1$ and $\sigma_2=-2$.

\begin{figure}[H]
	\centering
	\includegraphics[width=0.47\textwidth]{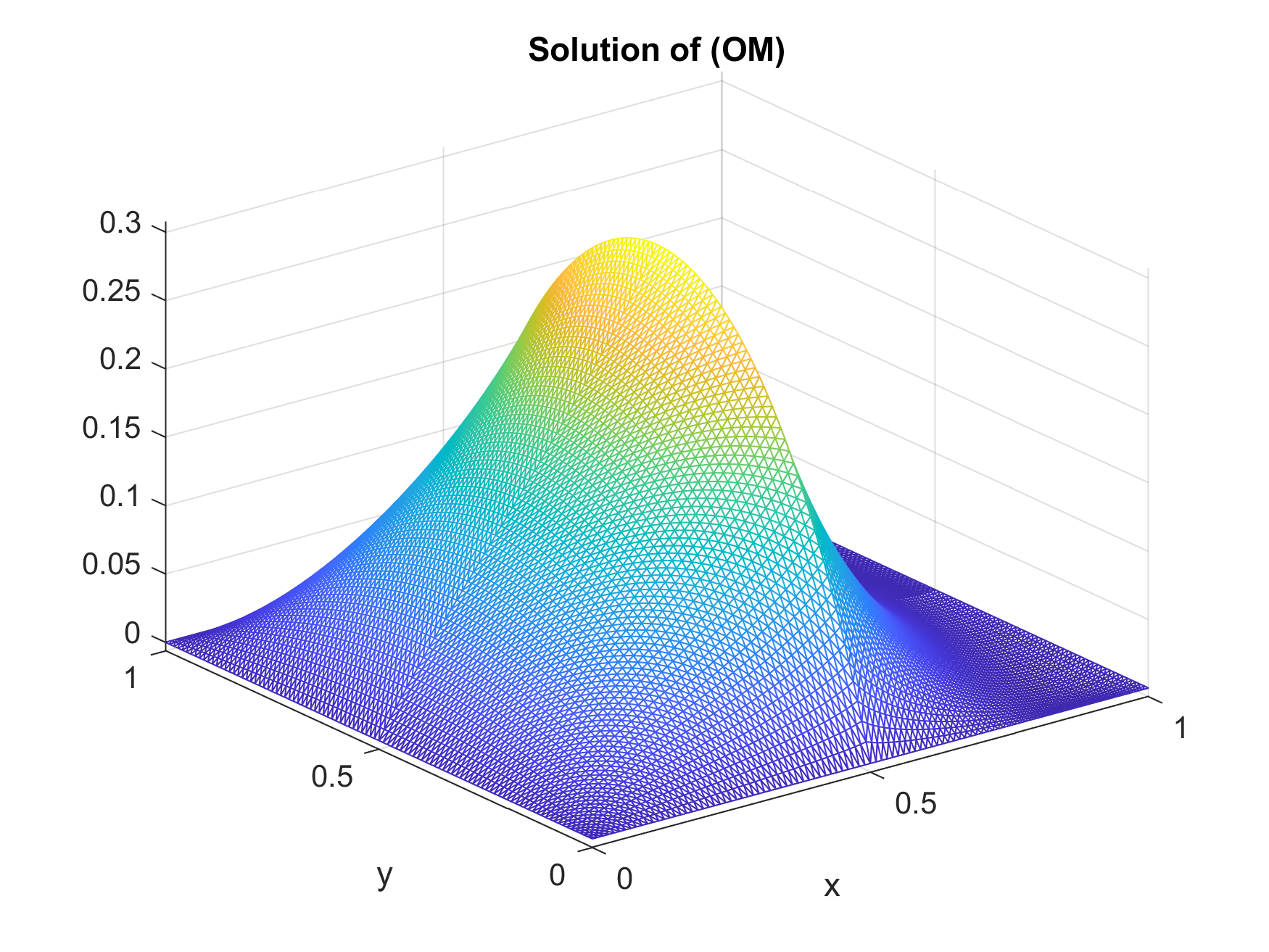}	\includegraphics[width=0.47\textwidth]{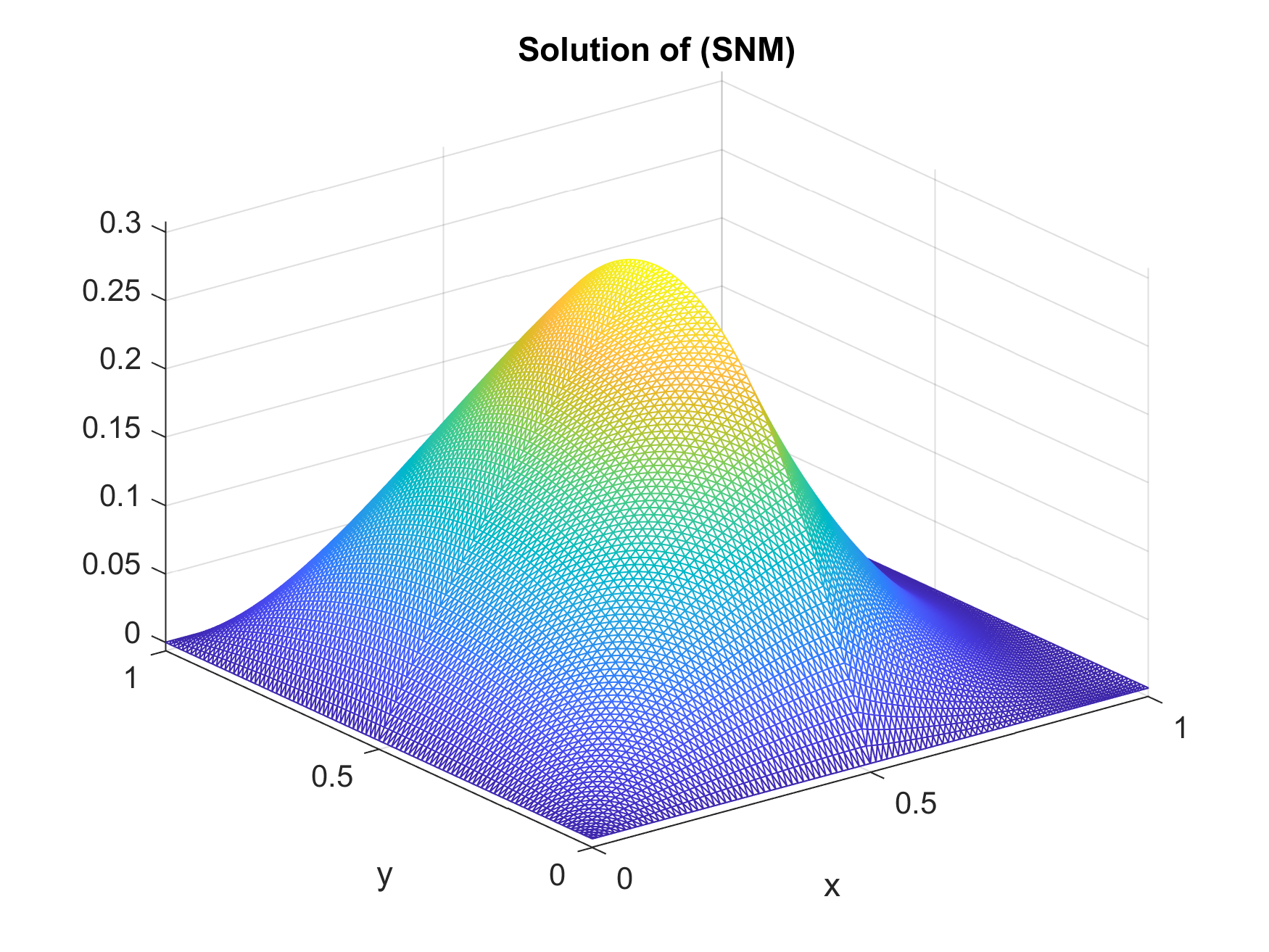}
	\caption{Comparison of the solutions of old and simplified new models for $\sigma_1=1$ and $\sigma_2=-0.5$.
	}
	\label{fig:s1}
\end{figure}

%Despite the strong contrast and the sign change across the interface, the two
%solutions exhibit comparable qualitative features, especially away from the
%interface, indicating that the interface correction successfully compensates for
%the missing nonlocal interactions in the bulk decomposition.

%
%We repeat the same comparison for a more contrasted configuration,
%namely $\sigma_1=1$ and $\sigma_2=-2$, still with $s=0.999$.
%The corresponding solutions are shown in
%Figures~.

\begin{figure}[H]
	\centering
	\includegraphics[width=0.47\textwidth]{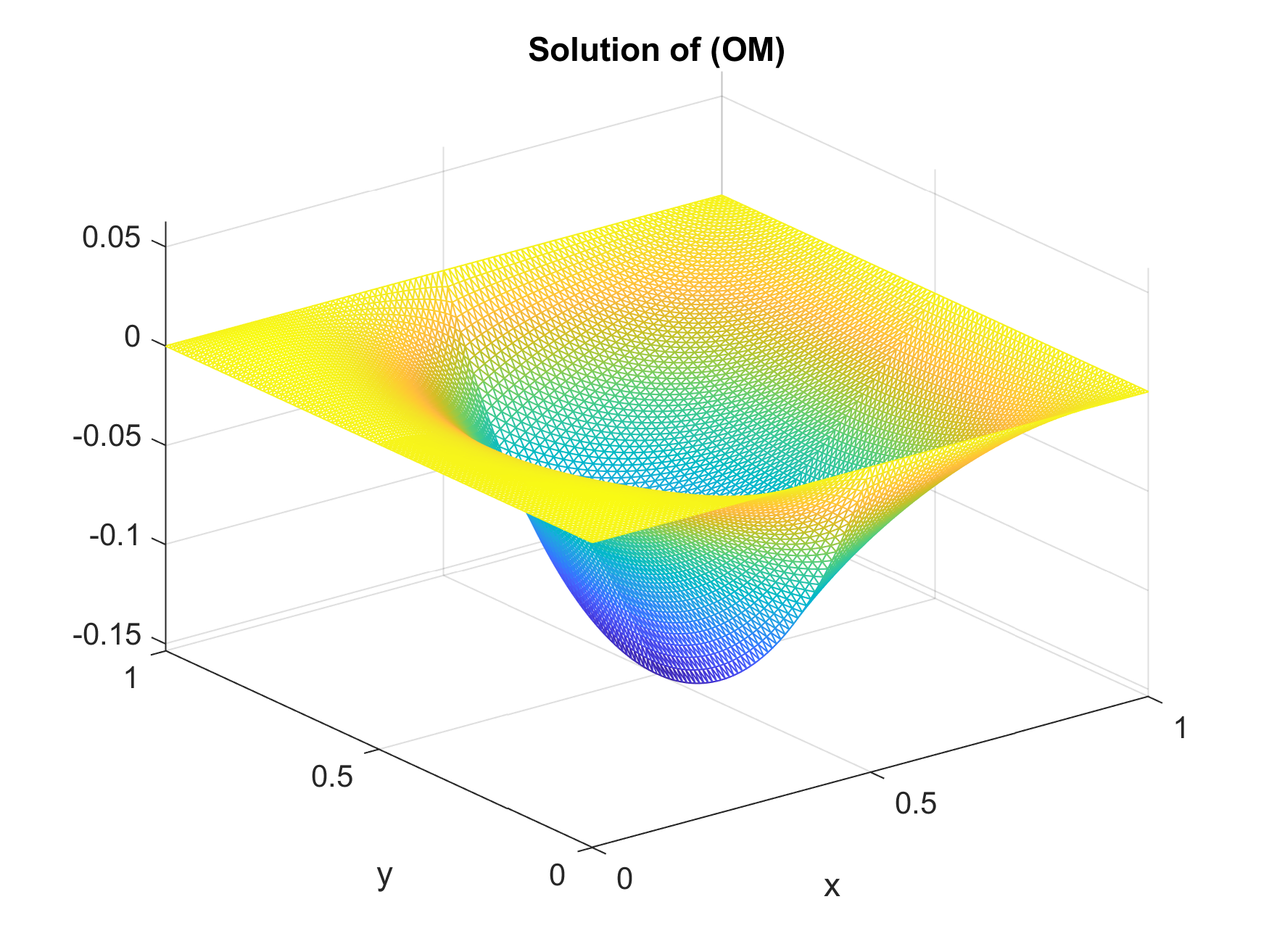}	\includegraphics[width=0.47\textwidth]{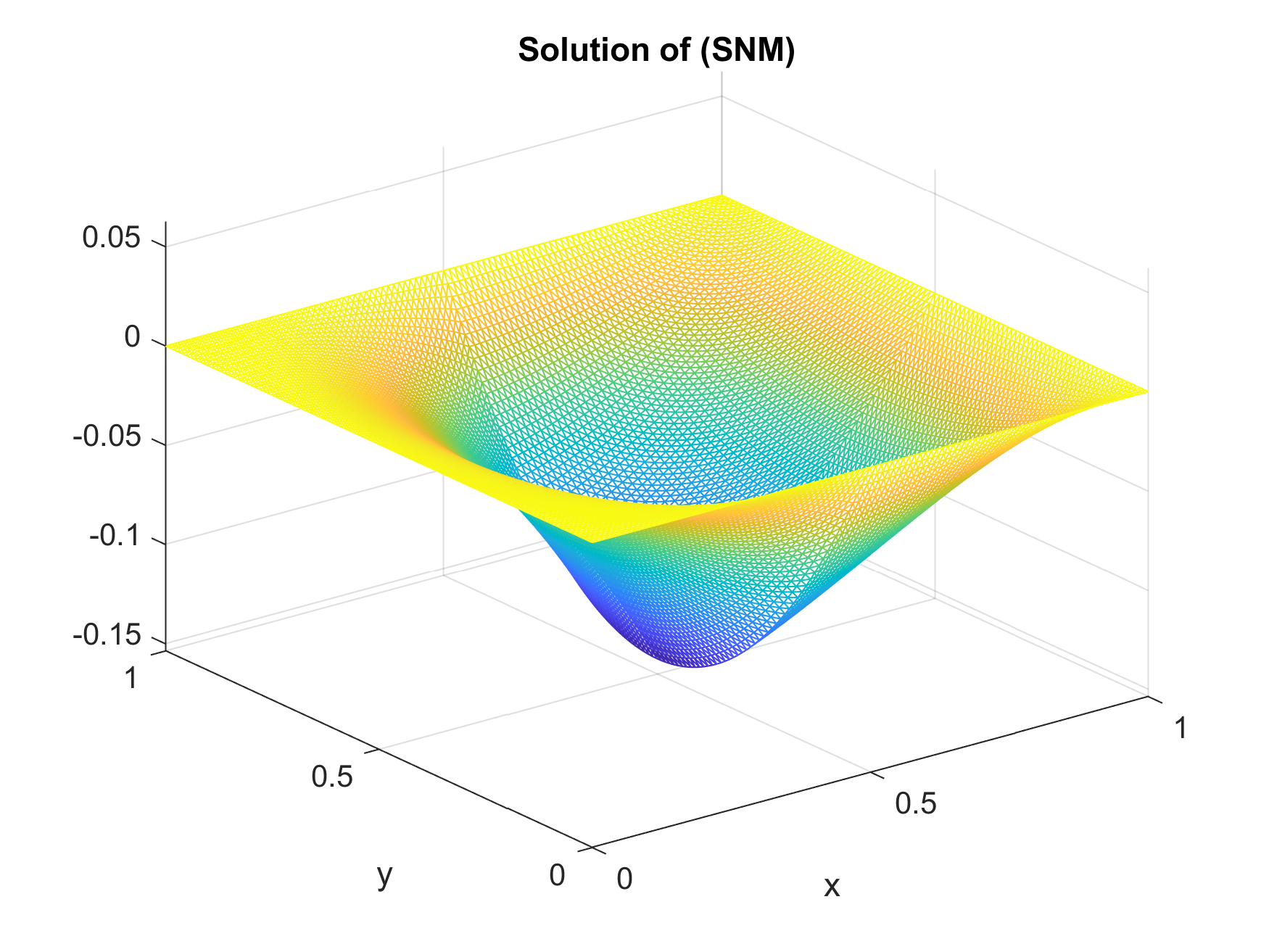}
	\caption{Comparison of the solutions of old and simplified new models for $\sigma_1=1$ and $\sigma_2=-2$.
	}
	\label{fig:s2}
\end{figure}

%A systematic analysis of the influence of the neglected interface terms $D_i$ and
%	of the convergence with respect to $s\to1^{-}$ will be addressed later.
	
	\section{Conclusion}
	We proposed and analyzed a reconstructed nonlocal formulation for a one-dimensional transmission problem with sign-changing coefficients, in the simplified regime $\sigma_3 = 0$. Indeed, we established weak T-coercivity for the global nonlocal problem and convergence of a simplified discrete reconstructed model toward the classical local solution as $s \to 1^-$ and $h \to 0^+$. The numerical experiments support the relevance of the approach and show that the simplified formulation captures the expected local limit. Extending the analysis beyond the case $\sigma_3 = 0$, and justifying the multidimensional setting, remain open questions.

\section*{Funding} This work was supported by the Agence de l’Innovation de Défense (AID) through the Centre Interdisciplinaire d’Etudes pour la Défense et la Sécurité (CIEDS) (project 2022 ElectroMath).

\section*{Acknowledgements}
This work was initiated, and a substantial part of it was carried out, during the author's postdoctoral position at ENSTA Paris, under the supervision of Patrick Ciarlet. The author would like to express sincere gratitude to him for his guidance and support. The author also warmly thanks Juan Pablo Borthagaray for many fruitful
discussions and valuable remarks.
%and for kindly sharing numerical code that was
%used in the preliminary two-dimensional experiments.

\clearpage
 \section{Appendix : Computations of the elements of $\mathbb{A}_{\text{old}}$} \label{Appendix8}
 This appendix is devoted to the explicit computation of the entries of
 $\mathbb{A}_{\mathrm{old}}$ (see Subsection \ref{Subsection3.1}). We begin by recalling the notation used in the
 derivations below.
 
 \smallbreak
 \noindent\textbf{\em --- Some notations :} 
 \begin{itemize}
 	\item[$\bullet$] $\mathbb{B}:=\dfrac{2}{C(s)}\mathbb{A}_{\text{old}}$ ;
 	\item [$\bullet$] $	 {H(h,s)}:=\dfrac{h^{1-2s}}{2s(1-s)(1-2s)(3-2s)}$ $(s\neq\dfrac12)$~;
 	%		\item[$\bullet$] $ {H_1(h,s)}:= \dfrac{h^{1-2s}}{s}\dis\int_{-1}^0(1+z)^{2-2s}\mathrm{d}z=\dfrac{h^{1-2s}}{s(3-2s)}$ ;
 	\item[$\bullet$] $ {H_1(h,s)}:= \dfrac{h^{1-2s}}{s(3-2s)}$ ;
 	%	\item[$\bullet$] $ {H_2(h,s)}:= \dfrac{h^{1-2s}}{s}\dis\int_{0}^1\dfrac{(1-z)^2}{(1+z)^{2s}}\mathrm{d}z=h^{1-2s}\dfrac{-2s^2+7s-7+2^{3-2s}}{s(1-s)(1-2s)(3-2s)}$ $(s\neq\dfrac12)$ ;
 	\item[$\bullet$] $ {H_2(h,s)}:=\left\{\begin{array}{lll}
 		2 {H(h,s)}[-2s^2+7s-7+2^{3-2s}] &\;\; \text{if} &s\neq\frac12,
 		\\
 		-5+8\log(2)&\;\; \text{if} &s=\frac12\;;
 	\end{array} \right.$
 	
 	%		\item[$\bullet$] $ {H_3(h,s)}:=\dfrac{1}{2h^2(1-s)}\dis\int_{0}^h[z^{2-2s}-(z-h)^{2-2s}]\mathrm{d}z=\dfrac{h^{1-2s}}{(1-s)(3-2s)}$ ;
 	\item[$\bullet$] $ {H_3(h,s)}:=\dfrac{h^{1-2s}}{(1-s)(3-2s)}$ ;
 	%		\item[$\bullet$] $ {H_4(h,s)}:=h^{1-2s}\dis\int_0^1 \int_{-1}^0 \frac{(x+y)^2}{(x-y)^{1+2s}}\mathrm{d}y\mathrm{d}x=h^{1-2s}\dfrac{2s^2-5s+4-2^{2-2s}}{s(1-s)(1-2s)(3-2s)}$ $(s\neq\dfrac12)$ ;
 	%		
 	\item[$\bullet$] $ {H_4(h,s)}:=\left\{\begin{array}{lll}
 		2 {H(h,s)}[2s^2-5s+4-2^{2-2s}] &\;\; \text{if} &s\neq\frac12,
 		\\
 		3-4\log(2)&\;\; \text{if} &s=\frac12\;;
 	\end{array} \right.$	
 	
 	%		\item[$\bullet$] $ {H_5(h,s)}:= \dfrac{h^{1-2s}}{s}\dis\int_{-1}^0\dfrac{|z|}{(1+z)^{-1+2s}}\mathrm{d}z=\dfrac{h^{1-2s}}{2s(1-s)(3-2s)}$ ;
 	
 	\item[$\bullet$] $ {H_5(h,s)}:= {H(h,s)}[1-2s] $ ;
 	%		\item[$\bullet$] $ {H_6(h,s)}:= -2 h^{1-2s} \dis\int_{-1}^0 \int_{-1}^0 \frac{(1+x)(1+y)}{(1+x-y)^{1+2s}}\mathrm{d}y\mathrm{d}x=h^{1-2s}\dfrac{s-2+2^{1-2s}}{s(1-s)(3-2s)}$ ;
 	\item[$\bullet$] $ {H_6(h,s)}:= 2 {H(h,s)}[1-2s][s-2+2^{1-2s}]
 	$ ;
 	%		\item[$\bullet$] $
 	%		 {H_7(h,s)}:= -2 h^{1-2s} \dis\int_{0}^1 \int_{-1}^0 \frac{(1-x)(1+y)}{(1+x-y)^{1+2s}}\mathrm{d}y\mathrm{d}x=h^{1-2s}\dfrac{4s^2-s(14-2^{4-2s})+13+3^{3-2s}-5\cdot2^{3-2s}}{2s(1-s)(1-2s)(3-2s)}$ $(s\neq\dfrac12)$~;
 	%		
 	\item[$\bullet$] $ {H_7(h,s)}:=\left\{\begin{array}{lll}
 		{H(h,s)}[4s^2-s(14-2^{4-2s})+13+3^{3-2s}-5\cdot2^{3-2s}] &\;\; \text{if} &s\neq\frac12,
 		\\
 		1-16\log(2)+9\log(3)&\;\; \text{if} &s=\frac12\;;
 	\end{array} \right.$
 	
 	%		\item[$\bullet$] for any $r>h$, $
 	%		\begin{array}{llll}
 		%		 {S_1(h,s,r)}	:=\dfrac{h}{s}\dis\int_{-1}^1\frac{(1-|z|)^2}{(hz+r)^{2s}}\mathrm{d}z
 		%		=\dfrac{(h+r)^{3-2s}+2hr^{2-2s}[2s-3]-(r-h)^{3-2s}}{h^2s(1-s)(1-2s)(3-2s)} \quad (s\neq\dfrac12)~;
 		%		\end{array}
 	%		$
 	
 	\item[$\bullet$]  $\forall r>h$, $ {S_1(h,s,r)}:=\left\{\begin{array}{lll}
 		\dfrac{2 {H(h,s)}}{h^{3-2s}}[(h+r)^{3-2s}+2hr^{2-2s}[2s-3]-(r-h)^{3-2s}] &\;\; \text{if} &s\neq\frac12,
 		\\
 		\frac{1}{h^{2}}[-2 \log  \left(-h +r \right) \left(h -r \right)^{2}+2 \left(h +r \right)^{2} \log \left(h +r \right)-8 r \left(\log \left(r \right)+\frac{1}{2}\right) h]&\;\; \text{if} &s=\frac12\;;
 	\end{array} \right.$
 	
 	%			\item[$\bullet$] for any $r>h$, $
 	%		\begin{array}{llll}
 		%			 {S_2(h,s,r)}	:=-\dfrac{h}{s}\dis\int_{-1}^0\frac{z(1+z)}{(hz+r)^{2s}}\mathrm{d}z
 		%			=\dfrac{r^{2-2s}[2hs-3h+2r]+(r-h)^{2-2s}[2hs-h-2r]}{2h^2s(1-s)(1-2s)(3-2s)} \quad (s\neq\dfrac12)~;
 		%		\end{array}
 	%		$
 	
 	\item[$\bullet$]  $\forall r>h$, $ {S_2(h,s,r)}:=\left\{\begin{array}{lll}
 		\dfrac{ {H(h,s)}}{h^{3-2s}}[r^{2-2s}[2hs-3h+2r]+(r-h)^{2-2s}[2hs-h-2r]] &\;\; \text{if} &s\neq\frac12,
 		\\
 		\frac{1}{h^{2}}[2 r \left(h -r \right) \log  \left(-h +r \right)+2 r\log  \left(r \right)  \left(-h +r \right)+h \left(h -2 r \right)]&\;\; \text{if} &s=\frac12\;;
 	\end{array} \right.$
 	
 	%		\item[$\bullet$] $H_6(h,s):=H_4(h,s)+H_5(h,s)=h^{1-2s}\dfrac{-4s+4-2^{2-2s}}{s(1-s)(1-2s)(3-2s)}$ $(s\neq\dfrac12)$ ;
 	%	\item[$\bullet$]  $\forall k\geq2$, $\begin{aligned}[t]  {L_1(h,s,k)}:=  {H(h,s)}[3k^{3-2s}-2(k+1)^{3-2s}+(k+2)^{3-2s}-(k-2s+2)(k-1)^{2-2s}\\  +(-4s+6)k^{2-2s}-(k-2s+4)(k+1)^{2-2s}]\; \text{if} \;s\neq\frac12 ,\end{aligned}$
 	%	\\
 	%	if $s=\frac12 $, $ {L_1(h,s,k)}:=\left(-k^{2}+1\right) \log  \left(-1+k \right)+\left(2+k \right)^{2} \log \left(2+k \right)+\left(-3 k^{2}-8 k -5\right) \log \left(k +1\right)+3 \log  \left(k \right) \left(k +\frac{4}{3}\right) k$
 	
 	\item[$\bullet$] $\forall k \geq 2$,
 	\[
 	{L_1(h,s,k)} :=
 	\begin{cases}
 		{H(h,s)}\Big[
 		3k^{3-2s} - 2(k+1)^{3-2s} + (k+2)^{3-2s}
 		- (k - 2s + 2)(k-1)^{2-2s} \\
 		\hspace{4.5cm}  + (-4s + 6)k^{2 - 2s}
 		- (k - 2s + 4)(k+1)^{2 - 2s}
 		\Big] & \text{if } s \neq \frac{1}{2}, \\[1.5ex]
 		(-k^2 + 1)\log(k - 1)
 		+ (k + 2)^2 \log(k + 2) +[-3k^2 - 8k - 5]\log(k + 1)
 		\\
 		\hspace{9cm}+ 3k[k + \tfrac{4}{3}] \log(k)
 		& \text{if } s = \frac{1}{2}\;;
 	\end{cases}
 	\]
 	
 	%		\item[$\bullet$] for any $ \forall k\geq2$, $\begin{aligned}[t]  {L_2(h,s,k)}:=  {H(h,s)}[(k-2)^{3-2s}-2(k-1)^{3-2s}+3k^{3-2s}-(k+2s-4)(k-1)^{2-2s}\\ \hspace*{7.5cm}+(4s-6)k^{2-2s}-(k+2s-2)(k+1)^{2-2s}];\;	\end{aligned}$

 	\item[$\bullet$] $\forall k \geq 2$,
 	\[
 	{L_2(h,s,k)} :=
 	\begin{cases}
 		{H(h,s)}\Big[
 		(k-2)^{3-2s}-2(k-1)^{3-2s}+3k^{3-2s}-(k+2s-4)(k-1)^{2-2s} \\
 		\hspace{4.5cm} 	+(4s-6)k^{2-2s}-(k+2s-2)(k+1)^{2-2s}
 		\Big] & \text{if } s \neq \frac{1}{2}, \\[1.5ex]
 		\left(k -2\right)^{2} \log \left(k -2\right)+[-3 k^{2}+8 k -5] \log \left(k-1 \right)+\left(-k^{2}+1\right) \log \left(k +1\right)
 		\\
 		\hspace{9cm}+3  k [k -\frac{4}{3}]\log \left(k \right)
 		& \text{if } s = \frac{1}{2} \text{ and } k\neq2,
 		\\[1.5ex]
 		4 \log (2)-3 \log (3) & \text{if } s = \frac{1}{2} \text{ and } k=2.
 	\end{cases}
 	\]

 \end{itemize}

 In the subsequent, we present some details on the calculations of the elements of $\mathbb{B}$.
 
 \bigbreak
 %\noindent\textbf{\em --- First case :} diagonal elements $b_{ii}$. 
 \subsection{First case : diagonal elements $\mathbb{B}_{ii}$}
 We have
 $$
 \mathbb{B}_{ii}= \int_{-\infty}^{+\infty}	 \int_{-\infty}^{+\infty} \underline{\sigma}(x,y) \frac{(\phi_{i}(x)-\phi_{i}(y))^2}{|x-y|^{1+2s}}\mathrm{d}y\mathrm{d}x=\int_{-\infty}^{+\infty}	 \int_{-\infty}^{+\infty} g_{i}(x,y)\mathrm{d}y\mathrm{d}x, \quad i=1,...,N_h.
 $$
 To simplify the computations, let us subdivide the domain of integration:
 $$
 \begin{array}{llll}
 	\dis
 	\int_{-\infty}^{+\infty}\int_{-\infty}^{+\infty}g_{i}(x,y)\mathrm{d}y\mathrm{d}x&\dis=\overset{\hspace*{3.75cm}=0}{\cancel{\int_{-\infty}^{x_{i-1}}\int_{-\infty}^{x_{i-1}}g_{i}(x,y)\mathrm{d}y\mathrm{d}x}}+\overbrace{2\int_{-\infty}^{x_{i-1}}\int_{x_{i-1}}^{x_{i+1}}g_{i}(x,y)\mathrm{d}y\mathrm{d}x}^{J_1(h,s)}
 	\\\\
 	&\dis +2\overset{\hspace*{3.75cm}=0}{\cancel{\int_{-\infty}^{x_{i-1}}\int_{x_{i+1}}^{+\infty}g_{i}(x,y)\mathrm{d}y\mathrm{d}x}} +\overbrace{2\int_{x_{i+1}}^{+\infty}\int_{x_{i-1}}^{x_{i+1}}g_{i}(x,y)\mathrm{d}y\mathrm{d}x}^{J_2(h,s)}
 	\\\\
 	&\dis+\overset{\hspace*{3.75cm}=0}{\cancel{\int_{x_{i+1}}^{+\infty}\int_{x_{i+1}}^{+\infty}g_{i}(x,y)\mathrm{d}y\mathrm{d}x}}+\overbrace{\int_{x_{i-1}}^{x_{i+1}}\int_{x_{i-1}}^{x_{i+1}}g_{i}(x,y)\mathrm{d}y\mathrm{d}x}^{J_3(h,s)}.
 \end{array}
 $$
 In order, to compute $J_1$, $J_2$ and $J_3$, wee need to distinguish five cases ; $x_i=b$, $x_{i+1}=b$, $x_{i-1}=b$,  $x_{i-1}>b$ and   $x_{i+1}<b$.
 \\
 %{\bf\em 1) when $x_i=b$.} 
 \subsubsection{When $x_i=b$}
 In this case, we split $\dis\int_{x_{i-1}}^{x_{i+1}}\ldots=\int_{x_{i-1}}^b\ldots+\int_b^{x_{i+1}}\ldots$. So, we have the following.
 \begin{itemize}
 	\item[$\bullet$] \underline{Calculation of $J_1(h,s)$} :
 	$$
 	\begin{array}{llllll}
 		J_1(h,s)&=2\dis\int_{-\infty}^{x_{i-1}}\int_{x_{i-1}}^{x_{i+1}}\underline{\sigma}(x,y) \frac{(\overset{\hspace*{.5cm}=0}{\cancel{\phi_{i}(x)}}-\phi_{i}(y))^2}{|x-y|^{1+2s}}\mathrm{d}y\mathrm{d}x
 		\\\\
 		&=2\sigma_1 \dis\int_{-\infty}^{x_{i-1}}\int_{x_{i-1}}^b\frac{(\phi_{i}(y))^2}{(y-x)^{1+2s}}\mathrm{d}y\mathrm{d}x+2\sigma_3 \dis\int_{-\infty}^{x_{i-1}}\int_b^{x_{i+1}}\frac{(\phi_{i}(y))^2}{(y-x)^{1+2s}}\mathrm{d}y\mathrm{d}x
 		\\\\
 		&=\dis \frac{\sigma_1}{s}\int_{x_{i-1}}^b\frac{(\phi_{i}(y))^2}{(y-x_{i-1})^{2s}}\mathrm{d}y+\frac{\sigma_3}{s}\int_b^{x_{i+1}}\frac{(\phi_{i}(y))^2}{(y-x_{i-1})^{2s}}\mathrm{d}y.
 	\end{array}
 	$$
 	By performing the change of variables $\widetilde{y}=\dfrac{|x_i-y|}{h}$, we get $ {J_1(h,s)=\sigma_1H_1(h,s)+\sigma_3 H_2(h,s)}$.
 	\item[$\bullet$] \underline{Calculation of $J_2(h,s)$} : similarly, we obtain $ {J_2(h,s)=\sigma_2H_1(h,s)+\sigma_3 H_2(h,s)}$.
 	\item[$\bullet$] \underline{Calculation of $J_3(h,s)$} : here we need to subdivide both integrals, {\em that is},
 	$$
 	\int_{x_{i-1}}^{x_{i+1}}\int_{x_{i-1}}^{x_{i+1}} \underline{\sigma}\ldots=\sigma_1	\int_{x_{i-1}}^b\int_{x_{i-1}}^b\ldots+	\sigma_2\int_b^{x_{i+1}}\int_b^{x_{i+1}}\ldots +2\sigma_3	\int_b^{x_{i+1}}\int_{x_{i-1}}^b\ldots.
 	$$
 	--- We have 
 	$$
 	\begin{array}{llll}
 		\dis\int_{x_{i-1}}^b\int_{x_{i-1}}^b\frac{(\phi_{i}(x)-\phi_{i}(y))^2}{|x-y|^{1+2s}}\mathrm{d}y\mathrm{d}x&=\dis h^{-2}   \int_{x_{i-1}}^b\int_{x_{i-1}}^b\frac{(|y-b|-|x-b|)^2}{|x-y|^{1+2s}}\mathrm{d}y\mathrm{d}x
 		\\\\
 		&=\dis  h^{-2}   \int_{x_{i-1}}^b\int_{x_{i-1}}^b|x-y|^{1-2s}\mathrm{d}y\mathrm{d}x
 		\\\\
 		&=\dis\dfrac{h^{-2}}{2(1-s)} \int_{x_{i-1}}^b [(b-x)^{2-2s}-(x_{i-1}-x)^{2-2s}]\mathrm{d}x.
 	\end{array}
 	$$
 	With the change of variables $\widehat{x}=b-x$, we get 
 	$$
 	\dis\int_{x_{i-1}}^b\int_{x_{i-1}}^b\frac{(\phi_{i}(x)-\phi_{i}(y))^2}{|x-y|^{1+2s}}\mathrm{d}y\mathrm{d}x=H_3(h,s).
 	$$
 	--- In the same way, we obtain
 	$$
 	\dis\int_b^{x_{i+1}}\int_b^{x_{i+1}}\frac{(\phi_{i}(x)-\phi_{i}(y))^2}{|x-y|^{1+2s}}\mathrm{d}y\mathrm{d}x=H_3(h,s).
 	$$
 	--- Furthermore, we have
 	$$
 	\begin{array}{llll}
 		\dis\int_b^{x_{i+1}}\int_{x_{i-1}}^b\frac{(\phi_{i}(x)-\phi_{i}(y))^2}{|x-y|^{1+2s}}\mathrm{d}y\mathrm{d}x&=	\dis\int_b^{x_{i+1}}\int_{x_{i-1}}^b\frac{\left|\frac{|y-b|}{h}-\frac{|x-b|}{h}\right|^2}{|x-y|^{1+2s}}\mathrm{d}y\mathrm{d}x
 		\\\\
 		&= H_4(h,s),
 	\end{array}
 	$$
 	where we have performed the change of variables  
 	$\widetilde{x}=\frac{x-b}{h}$ and $\widetilde{y}=\frac{y-b}{h}$.
 	\\
 	Therefore, $ {J_3(h,s)=(\sigma_1+\sigma_2)H_3(h,s)+2\sigma_3H_4(h,s)}$.
 \end{itemize}
 Hence, in the case $ {x_i=b}$, we get
 %\begin{center}
 %	\boxed{$$ {b_{ii}=(\sigma_1+\sigma_2)[H_1(h,s)+H_3(h,s)]+2\sigma_3[H_2(h,s)+H_4(h,s)]}.$$}
 %\end{center}
% \begin{tcolorbox}[colback=blue!10, colframe=blue!70!black, boxrule=0.5pt, arc=50pt]
 	%, title=Final Result
 	\[
 	{\mathbb{B}_{ii} = (\sigma_1 + \sigma_2)\big[H_1(h, s) + H_3(h, s)\big] + 2\sigma_3\big[H_2(h, s) + H_4(h, s)\big]}.
 	\]
% \end{tcolorbox}
 \subsubsection{When $x_{i+1}=b$}
 %In this case
 \begin{itemize}
 	\item[$\bullet$] \underline{Calculation of $J_1(h,s)$} :
 	$$
 	\begin{array}{llllll}
 		J_1(h,s)&=2\sigma_1\dis\int_{-\infty}^{x_{i-1}}\int_{x_{i-1}}^{b} \frac{(\phi_{i}(y))^2}{|x-y|^{1+2s}}\mathrm{d}y\mathrm{d}x
 		\\\\
 		&=\dfrac{\sigma_1}{s}\dis \int_{x_{i-1}}^{b}\frac{(\phi_{i}(y))^2}{(y-x_{i-1})^{2s}}\mathrm{d}y
 		\\\\
 		&=\sigma_1\dfrac{h^{1-2s}}{s}\dis\int_{-1}^1\frac{(1-|y|)^2}{(1+y)^{2s}}\mathrm{d}y,
 	\end{array}
 	$$
 	where we have performed the change of variables $\widetilde{y}=\dfrac{y-x_i}{h}$. Thus,
 	$ {J_1(h,s)=\sigma_1\big[H_1(h, s) + H_2(h, s)\big] }$.
 	
 	\item[$\bullet$] \underline{Calculation of $J_2(h,s)$} :  similarly, we obtain $ {J_2(h,s)=\sigma_3\big[H_1(h, s) + H_2(h, s)\big] }$.

 	\item[$\bullet$] \underline{Calculation of $J_3(h,s)$} : repeating the subdivision done in the case $x_i=b$, we get $$ {J_3(h,s)=2\sigma_1\big[H_3(h, s) + H_4(h, s)\big] }.$$
 \end{itemize}
 Therefore, in the case $ {x_{i+1}=b}$, we have
% \begin{tcolorbox}[colback=blue!10, colframe=blue!70!black, boxrule=0.5pt, arc=50pt]
 	\[
 	{\mathbb{B}_{ii} = (\sigma_1 + \sigma_3)\big[H_1(h, s) + H_2(h, s)\big] + 2\sigma_1\big[H_3(h, s) + H_4(h, s)\big]}.  
 	\]
% \end{tcolorbox}
 \subsubsection{When $x_{i-1}=b$} 
 In a manner similar to the case $x_{i+1}=b$, we get
% \begin{tcolorbox}[colback=blue!10, colframe=blue!70!black, boxrule=0.5pt, arc=50pt]
 	\[
 	{\mathbb{B}_{ii} = (\sigma_2 + \sigma_3)\big[H_1(h, s) + H_2(h, s)\big] + 2\sigma_2\big[H_3(h, s) + H_4(h, s)\big]}.
 	\]
% \end{tcolorbox}
 
 \subsubsection{When  $x_{i-1}>b$}
 \begin{itemize}
 	\item[$\bullet$] \underline{Calculation of $J_1(h,s)$} : in this case, we split $\dis\int_{-\infty}^{x_{i-1}}\ldots=\dis\int_{-\infty}^b\ldots+\dis\int_b^{x_{i-1}}\ldots$. Consequently,
 	$$
 	\begin{array}{lll}
 		J_1(h,s)&=\dis 2 \sigma_3\dis\int_{-\infty}^b\int_{x_{i-1}}^{x_{i+1}} \frac{(\phi_{i}(y))^2}{(y-x)^{1+2s}}\mathrm{d}y\mathrm{d}x+2 \sigma_2\dis\int_b^{x_{i-1}}\int_{x_{i-1}}^{x_{i+1}} \frac{(\phi_{i}(y))^2}{(y-x)^{1+2s}}\mathrm{d}y\mathrm{d}x
 		\\\\
 		&=\dis \dfrac{\sigma_3}{s} \int_{x_{i-1}}^{x_{i+1}} \frac{(\phi_{i}(y))^2}{(y-b)^{2s}}\mathrm{d}y+\dfrac{\sigma_2}{s} \int_{x_{i-1}}^{x_{i+1}}\left[\frac{1}{(y-x_{i-1})^{2s}}-\frac{1}{(y-b)^{2s}}\right](\phi_{i}(y))^2\mathrm{d}y
 		\\\\
 		&=\dis\dfrac{\sigma_2}{s} \int_{x_{i-1}}^{x_{i+1}}\frac{(\phi_{i}(y))^2}{(y-x_{i-1})^{2s}}\mathrm{d}y+ \dfrac{\sigma_3-\sigma_2}{s} \int_{x_{i-1}}^{x_{i+1}} \frac{(\phi_{i}(y))^2}{(y-b)^{2s}}\mathrm{d}y.
 	\end{array}
 	$$
 	After performing the change of variables $\widetilde{y}=\dfrac{y-x_i}{h}$, we get $$ {J_1(h,s)=\sigma_2\big[H_1(h, s) + H_2(h, s)\big]+(\sigma_3-\sigma_2)S_1(h,s,r)},$$
 	with $ {r=x_i-b}>h$.
 	\item[$\bullet$] \underline{Calculation of $J_2(h,s)$} : initially, the case 
 	$x_{i-1}=b$
 	was treated as $x_{i}>b$. However, due to the need to handle various cases separately, we were required to split these situations further. Thus, we obtain $ {J_2(h,s)=\sigma_2\big[H_1(h, s) + H_2(h, s)\big]}$.
 	\item[$\bullet$] \underline{Calculation of $J_3(h,s)$} :  following the same calculations done in the previous cases, we obtain
 	$$ {J_3(h,s)=2\sigma_2\big[H_3(h, s) + H_4(h, s)\big] }.$$
 \end{itemize}
 Thence, in the case $ {x_{i-1}>b}$, we find
% \begin{tcolorbox}[colback=blue!10, colframe=blue!70!black, boxrule=0.5pt, arc=50pt]
 	\[	 {\mathbb{B}_{ii} = 2\sigma_2 \big[H_1(h, s) + H_2(h, s)+H_3(h, s) + H_4(h, s)\big] +(\sigma_3-\sigma_2)S_1(h,s,r)},
 	\] \ \ with $ {r=x_i-b}>h$.
% \end{tcolorbox}
 \subsubsection{When  $x_{i+1}<b$}
 Following the same reasoning as for the case  $x_{i-1}>b$, we obtain
% \begin{tcolorbox}[colback=blue!10, colframe=blue!70!black, boxrule=0.5pt, arc=50pt]
 	\[	 {\mathbb{B}_{ii} = 2\sigma_1 \big[H_1(h, s) + H_2(h, s)+H_3(h, s) + H_4(h, s)\big] +(\sigma_3-\sigma_1)S_1(h,s,t)},
 	\]   \ \ with $ {t=b-x_i}>h$.
% \end{tcolorbox}
 \subsection{Second case : upper diagonal elements $\mathbb{B}_{i,i+1}$}
 We have
 $$
 \begin{array}{llll}
 	\mathbb{B}_{i,i+1}&\dis= \int_{-\infty}^{+\infty}	 \int_{-\infty}^{+\infty} \underline{\sigma}(x,y) \frac{(\phi_{i}(x)-\phi_{i}(y))(\phi_{i+1}(x)-\phi_{i+1}(y))}{|x-y|^{1+2s}}\mathrm{d}y\mathrm{d}x
 	\\\\ &\dis
 	=\int_{-\infty}^{+\infty}	 \int_{-\infty}^{+\infty} g_{i,i+1}(x,y)\mathrm{d}y\mathrm{d}x, \quad i=1,...,N_h.
 \end{array}
 $$
 To simplify the computations, let us subdivide the domain of integration:
 $$
 \begin{array}{llll}
 	\dis
 	\int_{-\infty}^{+\infty}\int_{-\infty}^{+\infty}g_{i,i+1}(x,y)\mathrm{d}y\mathrm{d}x&\dis=\overset{\hspace*{3.75cm}=0}{\cancel{\int_{-\infty}^{x_{i}}\int_{-\infty}^{x_{i}}g_{i,i+1}(x,y)\mathrm{d}y\mathrm{d}x}}+\overbrace{2\int_{x_i}^{x_{i+1}}\int_{-\infty}^{x_{i}}g_{i,i+1}(x,y)\mathrm{d}y\mathrm{d}x}^{K_1(h,s)}
 	\\\\
 	&\dis +\overbrace{2\int_{x_{i+1}}^{+\infty}\int_{-\infty}^{x_i}g_{i,i+1}(x,y)\mathrm{d}y\mathrm{d}x}^{K_2(h,s)} +\overbrace{\int_{x_{i}}^{x_{i+1}}\int_{x_{i}}^{x_{i+1}}g_{i,i+1}(x,y)\mathrm{d}y\mathrm{d}x}^{K_3(h,s)}
 	\\\\
 	&\dis+\overbrace{2\int_{x_{i+1}}^{+\infty}\int_{x_{i}}^{x_{i+1}}g_{i,i+1}(x,y)\mathrm{d}y\mathrm{d}x}^{K_4(h,s)}+\overset{\hspace*{3.75cm}=0}{\cancel{\int_{x_{i+1}}^{+\infty}\int_{x_{i+1}}^{+\infty}g_{i,i+1}(x,y)\mathrm{d}y\mathrm{d}x}}.
 \end{array}
 $$
 In order, to compute $K_1$, $K_2$, $K_3$ and $K_4$, we need to distinguish four cases ; $x_i=b$, $x_{i+1}=b$,  $x_{i}>b$ and   $x_{i+1}<b$. To do so, let us point out some simplifications :
 \begin{enumerate}
 	\item[(i)]  $ \begin{aligned}[t]
 		K_1(h,s)&=2\int_{x_i}^{x_{i+1}}	 \int_{-\infty}^{x_i} \underline{\sigma}(x,y) \frac{(\phi_{i}(x)-\phi_{i}(y))(\phi_{i+1}(x)-\overset{\hspace*{.75cm}=0}{\cancel{\phi_{i+1}(y)}})}{|x-y|^{1+2s}}\mathrm{d}y\mathrm{d}x
 		\\\\
 		&= \underbrace{2\int_{x_i}^{x_{i+1}}\phi_{i}(x)	\phi_{i+1}(x) \int_{-\infty}^{x_i}  \underline{\sigma}(x,y)\frac{\mathrm{d}y\mathrm{d}x}{(x-y)^{1+2s}}}_{K_{11}(h,s)}\underbrace{-2\int_{x_i}^{x_{i+1}}	\int_{x_{i-1}}^{x_i}  \underline{\sigma}(x,y)\frac{\phi_{i+1}(x) \phi_{i}(y)}{(x-y)^{1+2s}}\mathrm{d}y\mathrm{d}x}_{K_{12}(h,s)}.
 	\end{aligned} $
 	\item[(ii)] $ \begin{aligned}[t]
 		K_2(h,s)&=2\int_{x_{i+1}}^{+\infty}	 \int_{-\infty}^{x_i} \underline{\sigma}(x,y) \frac{(\overset{\hspace*{.55cm}=0}{\cancel{\phi_{i}(x)}}-\phi_{i}(y))(\phi_{i+1}(x)-\overset{\hspace*{.75cm}=0}{\cancel{\phi_{i+1}(y)}})}{|x-y|^{1+2s}}\mathrm{d}y\mathrm{d}x
 		\\\\
 		&= -2\int_{x_{i+1}}^{x_{i+2}}	\int_{x_{i-1}}^{x_i}  \underline{\sigma}(x,y)\frac{\phi_{i+1}(x) \phi_{i}(y)}{(x-y)^{1+2s}}\mathrm{d}y\mathrm{d}x.
 	\end{aligned} $
 	\item[(iii)] $
 	\begin{aligned}[t]
 		\dis	K_3(h,s)&=\int_{x_i}^{x_{i+1}}	 \int_{x_i}^{x_{i+1}}	 \underline{\sigma}(x,y) \frac{(\phi_{i}(x)-\phi_{i}(y))(\phi_{i+1}(x)-\phi_{i+1}(y))}{|x-y|^{1+2s}}\mathrm{d}y\mathrm{d}x
 		\\\\
 		&= -\dfrac{1}{h^2}\int_{x_i}^{x_{i+1}}	 \int_{x_i}^{x_{i+1}}  \underline{\sigma}(x,y) |x-y|^{1-2s}\mathrm{d}y\mathrm{d}x,
 	\end{aligned} 
 	$ 
 	\\
 	as $\phi_i(x)=1-\dfrac{x-x_i}{h}$  and $\phi_{i+1}(x)=1-\dfrac{x_{i+1}-x}{h}$ for any $x\in(x_i,x_{i+1})$.
 	\item[(iv)]  $ \begin{aligned}[t]
 		K_4(h,s)&=\dis2\int_{x_i}^{x_{i+1}}	 \int_{x_{i+1}}^{+\infty} \underline{\sigma}(x,y) \frac{(\phi_{i}(x)-\overset{\hspace*{.55cm}=0}{\cancel{\phi_{i}(y)}})(\phi_{i+1}(x)-\phi_{i+1}(y))}{|x-y|^{1+2s}}\mathrm{d}y\mathrm{d}x
 		\\\\
 		&= \underbrace{2\int_{x_i}^{x_{i+1}}\phi_{i}(x)	\phi_{i+1}(x)  \int_{x_{i+1}}^{+\infty}  \underline{\sigma}(x,y)\frac{\mathrm{d}y\mathrm{d}x}{(y-x)^{1+2s}}}_{K_{41}(h,s)}\underbrace{-2\int_{x_i}^{x_{i+1}}	\int_{x_{i+1}}^{x_{i+2}}  \underline{\sigma}(x,y)\frac{\phi_{i}(x) \phi_{i+1}(y)}{(y-x)^{1+2s}}\mathrm{d}y\mathrm{d}x}_{K_{42}(h,s)}.
 	\end{aligned} $
 \end{enumerate}
 Then,we have 
 $$
 \dis
 \int_{-\infty}^{+\infty}\int_{-\infty}^{+\infty}g_{i,i+1}(x,y)\mathrm{d}y\mathrm{d}x=K_{11}(h,s)+K_{12}(h,s)+K_{2}(h,s)+K_{3}(h,s)+K_{41}(h,s)+K_{42}(h,s).
 $$
 %{\bf\em 1) when $x_i=b$.} 
 \subsubsection{When $x_i=b$}
 \begin{itemize}
 	\item[$\bullet$] \underline{Calculation of $K_{11}(h,s)$} :
 	$$
 	\begin{array}{llllll}
 		K_{11}(h,s)&=	\dis2\sigma_3\int_{x_i}^{x_{i+1}}\phi_{i}(x)	\phi_{i+1}(x) \int_{-\infty}^{x_i}  \frac{\mathrm{d}y\mathrm{d}x}{(x-y)^{1+2s}}
 		\\\\
 		&=\dfrac{\sigma_3}{s}\dis \int_{x_i}^{x_{i+1}}\frac{\phi_{i}(x)	\phi_{i+1}(x)}{(x-x_{i})^{2s}}\mathrm{d}x
 		\\\\
 		&=\sigma_3\dfrac{h^{1-2s}}{s}\dis\int_{-1}^0\frac{(1-|1+\widetilde{x}|)(1-|\widetilde{x}|)}{(1+\widetilde{x})^{2s}}\mathrm{d}\widetilde{x}	\\\\
 		&=-\sigma_3\dfrac{h^{1-2s}}{s}\dis\int_{-1}^0\frac{\widetilde{x}}{(1+\widetilde{x})^{-1+2s}}\mathrm{d}\widetilde{x},
 	\end{array}
 	$$
 	where we have performed the change of variables $\widetilde{x}=\dfrac{x-x_{i+1}}{h}$. Thus,
 	$ {K_{11}(h,s)=\sigma_3H_5(h, s)}$.
 	
 	\item[$\bullet$] \underline{Calculation of $K_{12}(h,s)$} : 
 	$$
 	\begin{array}{llllll}
 		K_{12}(h,s)&=	\dis-2\sigma_3\int_{x_i}^{x_{i+1}} \int_{x_{i-1}}^{x_i}  \frac{	\phi_{i+1}(x)\phi_{i}(y)}{(x-y)^{1+2s}}\mathrm{d}y\mathrm{d}x
 		\\\\
 		&=\dis-2\sigma_3  h^{1-2s} \dis\int_{-1}^0 \int_{-1}^0 \frac{(1+\widetilde{x})(1+\widetilde{y})}{(1+\widetilde{x}-\widetilde{y})^{1+2s}}\mathrm{d}\widetilde{y}\mathrm{d}\widetilde{x},
 	\end{array}
 	$$
 	with the change of variables $\widetilde{x}=\dfrac{x-x_{i+1}}{h}$ and $\widetilde{y}=\dfrac{y-x_{i}}{h}$. Therefore, $ {K_{12}(h,s)=\sigma_3H_6(h, s)}$.
 	\item[$\bullet$] \underline{Calculation of $K_{2}(h,s)$} : 
 	$$
 	\begin{array}{llllll}
 		K_{2}(h,s)&=	\dis-2\sigma_3\int_{x_{i+1}}^{x_{i+2}}	\int_{x_{i-1}}^{x_i}  \frac{\phi_{i+1}(x) \phi_{i}(y)}{(x-y)^{1+2s}}\mathrm{d}y\mathrm{d}x
 		\\\\
 		&=\dis-2\sigma_3h^{1-2s} \dis\int_{0}^1 \int_{-1}^0 \frac{(1-\widetilde{x})(1+\widetilde{y})}{(1+\widetilde{x}-\widetilde{y})^{1+2s}}\mathrm{d}\widetilde{y}\mathrm{d}\widetilde{x},
 	\end{array}
 	$$
 	by using the same change of variables as earlier. Thus, we get  $ {K_{2}(h,s)=\sigma_3H_7(h, s)}$.
 	\item[$\bullet$] \underline{Calculation of $K_{3}(h,s)$} : 
 	$$
 	\begin{array}{llllll}
 		K_{3}(h,s)&=	\dis-\dfrac{ \sigma_2}{h^2} \int_{x_{i}}^{x_{i+1}}	\int_{x_{i}}^{x_{i+1}}  |x-y|^{1-2s}\mathrm{d}y\mathrm{d}x
 		\\\\
 		&= \dis-\dfrac{\sigma_2}{2h^2(1-s)}\dis\int_{x_{i}}^{x_{i+1}}[|x-x_{i+1}|^{2-2s}-|x-x_i|^{2-2s}]\mathrm{d}x
 		\\\\
 		&=\dis-\dfrac{\sigma_2}{2h^2(1-s)}\dis\int_{0}^h[\widehat{x}^{2-2s}-(\widehat{x}-h)^{2-2s}]\mathrm{d}\widehat{x} ,
 	\end{array}
 	$$ 
 	where we have performed the change of variables $\widehat{x}=x_{i+1}-x$. Then, $ {	K_{3}(h,s)=-\sigma_2H_3(h,s)}$.
 	\item[$\bullet$] \underline{Calculation of $K_{41}(h,s)$} : 
 	$$
 	\begin{array}{llllll}
 		K_{41}(h,s)&\dis=2\sigma_2\int_{x_i}^{x_{i+1}}\phi_{i}(x)	\phi_{i+1}(x)  \int_{x_{i+1}}^{+\infty}  \frac{\mathrm{d}y\mathrm{d}x}{(y-x)^{1+2s}}
 		\\\\
 		&\dis= 2\sigma_2\int_{x_i}^{x_{i+1}}\phi_{i}(x)	\phi_{i+1}(x) (x_{i+1}-x)^{-2s} {\mathrm{d}x}
 		\\\\
 		&\dis=\sigma_2\dfrac{h^{1-2s}}{s}\dis\int_{0}^1\frac{(1-\widetilde{x})(1-(1-\widetilde{x}))}{|1-\widetilde{x}|^{2s}}\mathrm{d}\widetilde{x}
 		\\\\
 		&\dis=\sigma_2\dfrac{h^{1-2s}}{s}\dis\int_{0}^1\frac{\widetilde{x}}{|1-\widetilde{x}|^{-1+2s}}\mathrm{d}\widetilde{x},
 	\end{array}
 	$$
 	with the change of variables $\widetilde{x}=\dfrac{x-x_i}{h}$. Then, $ {K_{41}(h,s)=\sigma_2H_{5}(h,s)}$.
 	\item[$\bullet$] \underline{Calculation of $K_{42}(h,s)$} : considering the following change of variables : $\widetilde{x}=\dfrac{x-x_i}{h}$ and $\widehat{y}=\dfrac{y-x_{i+1}}{h}$, we get
 	$$
 	\begin{array}{llllll}
 		K_{42}(h,s)&=\dis -2\sigma_2\int_{x_i}^{x_{i+1}}	\int_{x_{i+1}}^{x_{i+2}}  \frac{\phi_{i}(x) \phi_{i+1}(y)}{(y-x)^{1+2s}}\mathrm{d}y\mathrm{d}x
 		\\\\&=
 		-2\sigma_2  h^{1-2s} \dis\int_{0}^1 \int_{0}^1 \frac{(1-\widetilde{x})(1-\widetilde{y})}{(1-\widetilde{x}+\widehat{y})^{1+2s}}\mathrm{d}\widehat{y}\mathrm{d}\widetilde{x},
 	\end{array}
 	$$
 	which implies that $ {K_{42}(h,s)=\sigma_2H_{6}(h,s)}$.
 \end{itemize}
 Therefore, in the case $ {x_i=b}$, we obtain
 %\begin{center}
 %	\boxed{$$ {b_{ii}=(\sigma_1+\sigma_2)[H_1(h,s)+H_3(h,s)]+2\sigma_3[H_2(h,s)+H_4(h,s)]}.$$}
 %\end{center}
% \begin{tcolorbox}[colback=blue!10, colframe=blue!70!black, boxrule=0.5pt, arc=50pt]
 	%, title=Final Result
 	\[
 	{\mathbb{B}_{i,i+1} =-\sigma_2H_3(h, s) +(\sigma_2 + \sigma_3)\big[H_5(h, s) + H_6(h, s)\big] + \sigma_3H_7(h, s)}.  
 	\]
% \end{tcolorbox}
 \subsubsection{When $x_{i+1}=b$}
 In a way similar to the case $x_{i}=b$, we get
% \begin{tcolorbox}[colback=blue!10, colframe=blue!70!black, boxrule=0.5pt, arc=50pt]
 	\[
 	{\mathbb{B}_{i,i+1} =-\sigma_1H_3(h, s) +(\sigma_1 + \sigma_3)\big[H_5(h, s) + H_6(h, s)\big] + \sigma_3H_7(h, s)}.
 	\]
% \end{tcolorbox}
 
 \subsubsection{When $x_i>b$}
 \begin{itemize}
 	\item[$\bullet$] \underline{Calculation of $K_{11}(h,s)$} :
 	$$
 	\begin{array}{llllll}
 		K_{11}(h,s)&=	\dis2\int_{x_i}^{x_{i+1}}\phi_{i}(x)	\phi_{i+1}(x) \int_{-\infty}^{x_i}  \frac{\underline{\sigma}(x,y)}{(x-y)^{1+2s}}\mathrm{d}y\mathrm{d}x
 		\\\\
 		&=	\dis2\sigma_3\int_{x_i}^{x_{i+1}}\phi_{i}(x)	\phi_{i+1}(x) \int_{-\infty}^{b}  \frac{\mathrm{d}y\mathrm{d}x}{(x-y)^{1+2s}}+2\sigma_2\int_{x_i}^{x_{i+1}}\phi_{i}(x)	\phi_{i+1}(x) \int_{b}^{x_i}  \frac{\mathrm{d}y\mathrm{d}x}{(x-y)^{1+2s}}
 		\\\\
 		&=\dfrac{\sigma_2}{s}\dis \int_{x_i}^{x_{i+1}}\frac{\phi_{i}(x)	\phi_{i+1}(x)}{(x-x_{i})^{2s}}\mathrm{d}x+\dfrac{\sigma_3-\sigma_2}{s}\dis \int_{x_i}^{x_{i+1}}\frac{\phi_{i}(x)	\phi_{i+1}(x)}{(x-b)^{2s}}\mathrm{d}x
 		\\\\
 		&=-\sigma_2\dfrac{h^{1-2s}}{s}\dis\int_{-1}^0\frac{\widetilde{x}}{(1+\widetilde{x})^{-1+2s}}\mathrm{d}\widetilde{x}-\dfrac{\sigma_3-\sigma_2}{s} h \dis\int_{-1}^0\frac{\widetilde{x}(1+\widetilde{x})}{(h\widetilde{x}+\rho)^{2s}}\mathrm{d}\widetilde{x},
 	\end{array}
 	$$
 	where $\widetilde{x}=\dfrac{x-x_{i+1}}{h}$ and $\rho=x_{i+1}-b$. Thus,
 	$ {K_{11}(h,s)=\sigma_2H_5(h, s)-(\sigma_3-\sigma_2) S_2(h,s,\rho)}$.
 	\item[$\bullet$] \underline{Calculation of $K_{12}(h,s)$, $K_{2}(h,s)$, $K_{3}(h,s)$, $K_{41}(h,s)$ and $K_{42}(h,s)$} : Following the case $x_i=b$, we easily get
 	\begin{itemize}
 		\item[---]  $ {K_{12}(h,s)=\sigma_2H_6(h, s)}$ ; 
 		\item[---]  $ {K_{2}(h,s)=\sigma_2H_7(h, s)}$ ;
 		\item[---] $ {	K_{3}(h,s)=-\sigma_2H_3(h,s)}$ ;
 		\item[---] $ {K_{41}(h,s)=\sigma_2H_{5}(h,s)}$ ;
 		\item[---] $ {K_{42}(h,s)=\sigma_2H_{6}(h,s)}$.
 	\end{itemize}
 \end{itemize}	
 So, in the case $ {x_i>b}$, we have
 %\begin{center}
 %	\boxed{$$ {b_{ii}=(\sigma_1+\sigma_2)[H_1(h,s)+H_3(h,s)]+2\sigma_3[H_2(h,s)+H_4(h,s)]}.$$}
 %\end{center}
% \begin{tcolorbox}[colback=blue!10, colframe=blue!70!black, boxrule=0.5pt, arc=50pt]
 	%, title=Final Result
 	\[
 	{\mathbb{B}_{i,i+1} =\sigma_2[-H_3(h, s) +2H_5(h, s) + 2H_6(h, s)+H_7(h, s)]-(\sigma_3-\sigma_2)S_2(h,s,\rho)},  
 	\] \ \ with $ {\rho=x_{i+1}-b}>h$.
% \end{tcolorbox}
 \subsubsection{When $x_{i+1}<b$} Following the previous steps, we obtain
% \begin{tcolorbox}[colback=blue!10, colframe=blue!70!black, boxrule=0.5pt, arc=50pt]
 	%, title=Final Result
 	\[
 	{\mathbb{B}_{i,i+1} =\sigma_1[-H_3(h, s) +2H_5(h, s) + 2H_6(h, s)+H_7(h, s)]-(\sigma_3-\sigma_1)S_2(h,s,t)},  
 	\] \ \ with $ {t=b-x_{i}}>h$.
% \end{tcolorbox}
 \subsection{Third case : superdiagonal elements $\mathbb{B}_{ij}$ for $j\geq i+2$}
 We have
 $$
 \begin{array}{llll}
 	\mathbb{B}_{ij}&\dis= \int_{-\infty}^{+\infty}	 \int_{-\infty}^{+\infty} \underline{\sigma}(x,y) \frac{(\phi_{i}(x)-\phi_{i}(y))(\phi_{j}(x)-\phi_{j}(y))}{|x-y|^{1+2s}}\mathrm{d}y\mathrm{d}x
 	\\\\ &\dis
 	=	 \int_{x_{j-1}}^{x_{j+1}}	 \int_{x_{i-1}}^{x_{i+1}} \underline{\sigma}(x,y) \frac{(\phi_{i}(x)-\phi_{i}(y))(\phi_{j}(x)-\phi_{j}(y))}{|x-y|^{1+2s}}\mathrm{d}y\mathrm{d}x
 	\\\\ &\dis
 	=	 \int_{x_{j-1}}^{x_{j+1}}	 \int_{x_{i-1}}^{x_{i+1}} \underline{\sigma}(x,y) \frac{\phi_{i}(y)\phi_{j}(x)}{|x-y|^{1+2s}}\mathrm{d}y\mathrm{d}x,
 \end{array}
 $$
 as 
 $\operatorname{supp} \varphi_i\cap \operatorname{supp} \varphi_j=\emptyset$ ($x$ and $y$ should exist in different intervals). 
 \subsubsection{When $x_i=b$}
 With the change of variables  $\widetilde{x}=\dfrac{x-x_i}{h}$ and  $\widetilde{y}=\dfrac{y-x_i}{h}$, we get
 $$
 \begin{array}{llll}
 	\mathbb{B}_{ij}&\dis	=	\sigma_3 \int_{x_{j-1}}^{x_{j+1}}	 \int_{x_{i-1}}^{x_{i}} \frac{\phi_{i}(y)\phi_{j}(x)}{|x-y|^{1+2s}}\mathrm{d}y\mathrm{d}x+\sigma_2 \int_{x_{j-1}}^{x_{j+1}}	 \int_{x_{i}}^{x_{i+1}} \frac{\phi_{i}(y)\phi_{j}(x)}{|x-y|^{1+2s}}\mathrm{d}y\mathrm{d}x
 	\\\\ &\dis
 	=-2	\sigma_3 h^{1-2s}\int_{-1}^1\int_{-1}^0\frac{(1-|\widetilde{x}|)(1+\widetilde{y})}{(\widetilde{x}-\widetilde{y}+j-i)^{1+2s}}\mathrm{d}y\mathrm{d}x-2	\sigma_2 h^{1-2s}\int_{-1}^1\int_{0}^1\frac{(1-|\widetilde{x}|)(1-\widetilde{y})}{(\widetilde{x}-\widetilde{y}+j-i)^{1+2s}}\mathrm{d}y\mathrm{d}x
 	\\\\ &\dis
 	=\sigma_3 {L_1(h,s,k)}+\sigma_2 {L_2(h,s,k)},
 \end{array}
 $$
 where $ {k=j-i}\geq 2$. So, in this case
% \begin{tcolorbox}[colback=blue!10, colframe=blue!70!black, boxrule=0.5pt, arc=50pt]
 	\[
 	{\mathbb{B}_{ij}} =\sigma_3 {L_1(h,s,k)}+\sigma_2 {L_2(h,s,k)},
 	\]
 	\ \	with $	 {k=j-i}\geq 2$.
% \end{tcolorbox}
 \subsubsection{When $x_j=b$} Similarly, we obtain
% \begin{tcolorbox}[colback=blue!10, colframe=blue!70!black, boxrule=0.5pt, arc=50pt]
 	\[
 	{\mathbb{B}_{ij}} =\sigma_3 {L_1(h,s,k)}+\sigma_1 {L_2(h,s,k)},
 	\]
 	\ \	with $	 {k=j-i}\geq 2$.
% \end{tcolorbox}
 \subsubsection{When $x_i,x_j\neq b$} Let us denote $$\widehat{\sigma}_{ij}:=\left\{\begin{array}{llll}
 	\sigma_1&\text{if}& x_i,x_j<b,
 	\\
 	\sigma_2&\text{if}&x_i,x_j>b,
 	\\
 	\sigma_3&\text{if}& \text{otherwise}.
 \end{array}   \right.$$
 In this case,
 $$
 \begin{array}{llll}
 	\mathbb{B}_{ij} &\dis
 	=-2	\widehat{\sigma}_{ij} h^{1-2s}\int_{-1}^1\int_{-1}^1\frac{(1-|\widetilde{x}|)(1-|\widetilde{y}|)}{(\widetilde{x}-\widetilde{y}+j-i)^{1+2s}}\mathrm{d}y\mathrm{d}x
 	\\\\ &\dis
 	=\widehat{\sigma}_{ij}[ {L_1(h,s,k)}+ {L_2(h,s,k)}],
 \end{array}
 $$
 where $ {k=j-i}\geq 2$. Therefore,
% \begin{tcolorbox}[colback=blue!10, colframe=blue!70!black, boxrule=0.5pt, arc=50pt]
 	\[
 	{\mathbb{B}_{ij}} =\widehat{\sigma}_{ij}[ {L_1(h,s,k)}+ {L_2(h,s,k)}],
 	\]
 	\ \ with $	 {k=j-i}\geq 2$.
% \end{tcolorbox}

	\end{document}